\newtheorem{theorem}{Theorem}
\newtheorem{thmalpha}{Theorem}
\newtheorem{lemma}{Lemma}
\newtheorem{proposition}{Proposition}
\newtheorem{corollary}{Corollary}
\theoremstyle{definition}
\newtheorem{definition}{Definition}
\theoremstyle{remark}
\newcommand{\R}{\mathbb{R}}
\newcommand{\Z}{\mathbb{Z}}
\newcommand{\N}{\mathbb{N}}
\DeclareMathOperator{\diam}{diam}
\DeclareMathOperator{\conv}{Conv}
\DeclareMathOperator{\Cech}{C\check{e}ch}
\DeclareMathOperator{\CechMod}{\textbf{C\v ech}}
\DeclareMathOperator{\VRips}{VR}
\DeclareMathOperator{\rank}{rank}
\DeclareMathOperator{\dgm}{dgm}
\DeclareMathOperator{\kmax}{kmax}
\begin{document}

%\thispagestyle{empty}

%%%%%%%%%%%%%%%
% Title.
% Do File->Insert File->amstitle.tex, or C-x i amstitle.tex.
%%%%%%%%%%%%%%%
\title{The Persistence Landscapes of Affine Fractals}
\author{Michael J. Catanzaro}
\address{Department of Mathematics, Iowa State University, 396 Carver Hall, Ames, IA 50011}
\email{mjcatanz@iastate.edu}
\author{Lee Przybylski}
\address{Department of Mathematics, Iowa State University, 396 Carver Hall, Ames, IA 50011}
\email{leep@iastate.edu}
\author{Eric S. Weber}
\address{Department of Mathematics, Iowa State University, 396 Carver Hall, Ames, IA 50011}
\email{esweber@iastate.edu}
\subjclass[2020]{Primary: 55N31, 28A80; Secondary 37M22, 47H09}
\date{\today}
\begin{abstract}
We develop a method for calculating the persistence landscapes of affine fractals using the parameters of the corresponding transformations.  Given an iterated function system of affine transformations that satisfies a certain compatibility condition, we prove that there exists an affine transformation acting on the space of persistence landscapes which intertwines the action of the iterated function system.  This latter affine transformation is a strict contraction and its unique fixed point is the persistence landscape of the affine fractal.  We present several examples of the theory as well as confirm the main results through simulations.
\end{abstract}
\maketitle

% LINE SPACING
% \setlength{\baselineskip}{18pt}

% BEGIN THE MAIN BODY OF THE PAPER

\section{Introduction}

%\section{Introduction}

Affine fractals are the invariant set of an iterated function system (IFS) consisting of affine transformations acting on Euclidean space.  Well-known examples of such fractals are Cantor sets and Mandelbrojt sets.  Affine fractals, as subsets of Euclidean space, possess topological properties that can be extracted through the methods of algebraic topology.  In particular, these subsets of Euclidean space can be associated to a persistence landscape \cite{bubenik2015statistical} which is a sequence of functions that encode geometric properties of the set based on Euclidean distances.  These distances give rise to a family of homology groups derived from a filtration of complexes.  The homology groups in turn produce a persistence module from which the persistence landscapes are defined.

Interest in studying fractals using the tools of algebraic topology has occurred recently.  In \cite{Robins_thesis}, it was shown that persistence homology can be used to distinguish fractals of the same Hausdorff dimension.  In \cite{MATE2014252}, the authors describe a relationship between the Hausdorff dimension of fractals and the persistence intervals of Betti numbers.  %In chapter three we will show that in many cases, we can use the similitudes that make up an IFS to produce an explicit formula for the persistence landscape its invariant set.

Our main result (Theorem \ref{thm : Op_WSI_IFS}) concerns the calculation of the persistence diagrams and landscapes of affine fractals.  We prove that, under a certain compatibility condition, there exists an affine transformation $\mathcal{L}$ which is defined by the parameters of the IFS.  This transformation $\mathcal{L}$ acts on the space of persistence landscapes.  Moreover, it is a strict contraction and its unique fixed point is the persistence landscape of the affine fractal.  Consequently, the persistence landscape of the fractal can be computed via the limiting process of repeated applications of $\mathcal{L}$ to any initial input.  We also prove, under an additional assumption on the IFS, that $\mathcal{L}$ intertwines the action of the iterated function system  (Theorem \ref{thm : IFS_Op_Invariance}).   
%$\mathbf{f}_{0}$.
%\[ \mathbf{f} = lim_{n \to \infty} \mathcal{L}^{n} \mathbf{f}_{0}\]

\subsection{Affine Fractals}

A fractal, for our purposes, is a set which has a self-similarity property.  The middle-third Cantor set is the canonical example of a self-similar set.  Fractals are commonly studied objects in many contexts.  Cantor sets, in particular, appear in the context of analysis \cite{cantor1884puissance,St99a}, number theory \cite{Cas59a,Sch62a}, probability  \cite{lyons2017probability,Jor06,Byars2021sampling}, geometry \cite{Pol88a,peres1998self}, and harmonic analysis \cite{JP98,St00,RavStr16a}.  

In this paper, we consider specifically the class of affine fractals, which are generated by iterated function systems consisting of affine transformations.  By this we mean that the fractal is the invariant set for the iterated function system.
\begin{definition}
Suppose $\Psi =\{\psi_1,...,\psi_N\}$ is a set of maps acting on a metric space $(X,d)$.  We say that $A \subset X$ is invariant for $\Psi$ if $A = \cup_{i=1}^{N} \psi_{i}(A)$.
\end{definition}
For the maps $\Psi$, we denote the compositions (i.e. iterations) of the maps by:
\[\Psi(A) = \bigcup_{i=1}^N \psi_i(A),\:\: \Psi^p(A) = \Psi(\Psi^{p-1}(A)).\]

In \cite{hutchinson1981}, Hutchinson laid out the main relationship between fractals and iterated function systems (IFS); this relationship is the foundation of our results.  Recall that $\psi:X\to X$ is Lipschitz if for all $x,y\in X$, there exists $C>0$ such that 
\[d(\psi(x),\psi(y))\le Cd(x,y).\]
The Lipschitz constant of $\psi$ is the infimum of all such $C$.  We say that $\psi$ is a contraction if it has Lipschitz constant less than 1.

\begin{thmalpha}\label{thm : Hutch1981}
Let $X = (X,d)$ be a complete metric space and $\Psi = \{\psi_1,...,\psi_N\}$ a finite set of contraction maps on $X$.  Then there exists a unique closed bounded set $A$ such that
\[A = \bigcup_{i=1}^N \psi_i(A).\]
Furthermore, $A$ is compact and is the closure of the set of fixed points of finite compositions of members of $\Psi$. Moreover, for a closed bounded $K$, $\Psi^p(K)\to A$ in the Hausdorff metric.
\end{thmalpha}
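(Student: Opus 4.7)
The plan is to realize $A$ as the unique fixed point of the Hutchinson operator on a suitable complete metric space, and then invoke the Banach fixed point theorem. Specifically, let $\mathcal{K}(X)$ denote the space of non-empty compact subsets of $X$, equipped with the Hausdorff metric $d_H$. The excerpt phrases the conclusion in terms of closed bounded sets, but since $X$ is complete, I will first argue that the fixed point is automatically compact (so the closed-and-bounded formulation is equivalent in the relevant setting). The first step is the standard fact that $(\mathcal{K}(X), d_H)$ is itself a complete metric space, which I will cite or reproduce via a short Cauchy-sequence argument showing the limit set is $\{x \in X : x = \lim x_n, \, x_n \in K_n\}$.

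Next, I define the Hutchinson operator $\mathcal{F} : \mathcal{K}(X) \to \mathcal{K}(X)$ by $\mathcal{F}(K) = \bigcup_{i=1}^N \psi_i(K)$; this is well-defined since each $\psi_i$ is continuous and finite unions of compact sets are compact. I will then show that $\mathcal{F}$ is a contraction with Lipschitz constant at most $c := \max_i \mathrm{Lip}(\psi_i) < 1$. The two key inequalities are: (i) $d_H(\psi_i(K), \psi_i(K')) \le \mathrm{Lip}(\psi_i) \, d_H(K, K')$, which follows directly from the Lipschitz bound; and (ii) $d_H(A_1 \cup A_2, B_1 \cup B_2) \le \max(d_H(A_1,B_1), d_H(A_2, B_2))$, which follows from unpacking the definition of $d_H$ in terms of $\varepsilon$-neighborhoods. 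Combining these yields $d_H(\mathcal{F}(K), \mathcal{F}(K')) \le c \, d_H(K, K')$.

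The Banach fixed point theorem now gives a unique $A \in \mathcal{K}(X)$ with $\mathcal{F}(A) = A$ (proving the invariance and uniqueness statements) and also the convergence $\Psi^p(K) \to A$ in $d_H$ for every $K \in \mathcal{K}(X)$; extending this to closed bounded $K$ in the general complete metric space setting requires only noting that after one application of $\mathcal{F}$ the image is already contained in a compact neighborhood determined by $A$, so the iterates lie in $\mathcal{K}(X)$ for large $p$. For the statement about fixed points of compositions, I observe that for any word $(i_1, \dots, i_p) \in \{1, \dots, N\}^p$, the composition $\psi_{i_1} \circ \cdots \circ \psi_{i_p}$ is a contraction with constant $\le c^p$, hence has a unique fixed point $x_{(i_1,\dots,i_p)}$; invariance of $A$ under $\mathcal{F}$ and hence under all such compositions forces $x_{(i_1,\dots,i_p)} \in A$.

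The main obstacle is the density claim: I must show that every point of $A$ is a limit of such fixed points. The argument I would use is to pick any $a \in A$ and any $\varepsilon > 0$, then exploit $\Psi^p(\{a\}) \subset A$ together with the diameter estimate $\mathrm{diam}(\psi_{i_1} \circ \cdots \circ \psi_{i_p}(A)) \le c^p \, \mathrm{diam}(A)$; for $p$ large enough, the unique fixed point $x_{(i_1,\dots,i_p)}$ of the composition lies within $c^p \, \mathrm{diam}(A)$ of any other point in $\psi_{i_1} \circ \cdots \circ \psi_{i_p}(A)$, and by choosing the word so that $a$ lies in this image (which is possible via the covering $A = \bigcup \psi_{i_1}\circ\cdots\circ \psi_{i_p}(A)$), we get $d(a, x_{(i_1,\dots,i_p)}) < \varepsilon$. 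The closure of the fixed-point set is therefore exactly $A$.
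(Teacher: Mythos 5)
This is Hutchinson's theorem, which the paper states as background (Theorem~\ref{thm : Hutch1981}) and cites from \cite{hutchinson1981} without proof, so there is no internal argument to compare against; your proposal is essentially Hutchinson's own proof, namely Banach's fixed point theorem applied to the Hutchinson operator on the hyperspace of compact sets with the Hausdorff metric, and its main line (completeness of $(\mathcal{K}(X),d_H)$, the contraction estimate via the two inequalities you list, fixed points of words $\psi_{i_1}\circ\cdots\circ\psi_{i_p}$ lying in $A$, and density via $\diam\bigl(\psi_{i_1}\circ\cdots\circ\psi_{i_p}(A)\bigr)\le c^p\diam(A)$) is correct.

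Two points deserve repair. First, your justification for extending $\Psi^p(K)\to A$ to arbitrary closed bounded $K$ is wrong as stated: in a general complete metric space a closed bounded set need not become compact after finitely many applications of $\mathcal{F}$ (take the closed unit ball of $\ell^2$ and $\psi(x)=x/2$; every iterate is a non-compact closed ball), so the iterates need never enter $\mathcal{K}(X)$. The fix is easier than what you wrote: since $K$ and $A$ are both bounded, $d_H(K,A)<\infty$, and the same contraction estimate gives $d_H(\Psi^p(K),A)=d_H(\Psi^p(K),\Psi^p(A))\le c^p\,d_H(K,A)\to 0$ directly, with no compactness of the iterates needed. Second, the theorem asserts uniqueness among \emph{closed bounded} sets, whereas Banach on $\mathcal{K}(X)$ only gives uniqueness among compact sets; you flag this but the equivalence is not automatic, since closed and bounded does not imply compact here. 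The clean way to close this is to note that the space of non-empty closed bounded subsets is also complete under $d_H$ and that $K\mapsto\overline{\bigcup_i\psi_i(K)}$ is a contraction on it; any closed bounded $A$ with $A=\bigcup_i\psi_i(A)$ is in particular a fixed point of this closure-modified operator, hence coincides with its unique fixed point, which is the compact set already produced. With those two adjustments the proof is complete.
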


Recall that the Hausdorff distance between two sets $A,B \subset X$ is given by
\[ d_{H}(A,B) = \max \left\{ \sup_{x \in A} \inf_{y \in B} d(x,y) , \sup_{y \in B} \inf_{x \in A} d(x,y) \right\}. \]

%We refer to the set $K$ as the \emph{invariant set} for $\Psi$; if $\Psi$ consists of affine transformations, then we will also refer to $K$ as the \emph{fractal generated by} $\Psi$.

%Related to contractions are the class of functions Hutchinson refers to as \emph{similitudes}.  $\psi$ is a similitude if there exists $r>0$ such that for all $x,y\in X$, 
%\[d(\psi(x),\psi(y)) = rd(x,y).\]
%Many of our results will focus on IFS that consist only of similitudes.  Similitudes on Euclidean space are easy to characterize based on this result from \cite{hutchinson1981}
%\begin{proposition}
%$\psi:\R^d\to\R^d$ is a similitude if and only if there exists $b\in \R^d$ , and some orthogonal matrix $U$ such that $\psi(x) = r(Ux-b)$ for all $x\in\R^d$.
%\end{proposition}

For us, the maps $\Psi$ consist of affine transformations acting on $\R^d$.  Moreover, we assume that the linear part of the maps are scalars which are common to all of the maps.  Therefore, our maps have the form $\psi_{j}(\vec{x}) = c(\vec{x} + \vec{b}_{j})$, where $c \in (0,1)$.
%\begin{examples}
The following are examples of affine fractals in our class:
\begin{enumerate}
\item the classical middle-third Cantor set in $\R$; 
\item the Sierpinski gasket in $\R^2$;
\item the Sierpinksi carpet in $\R^2$;
\item the Menger sponge (or Sierpinski cube) in $\R^3$.
\end{enumerate}
%\end{examples}

The Cantor set is the invariant set for the IFS with generators
\[ \psi_{0}(x)= \frac{x}{3} ; \quad \psi_{1}(x) = \frac{x + 2}{3}. \]
The Sierpinski carpet is the invariant set for the IFS with generators
\[ \psi_{0}(x,y)= \left(\frac{x}{3}, \frac{y}{3} \right) ; \quad \psi_{1}(x) = \left(\frac{x+2}{3}, \frac{y}{3} \right); \quad \psi_{2}(x) = \left(\frac{x}{3}, \frac{y+2}{3} \right); \quad \psi_{3}(x) = \left(\frac{x+2}{3}, \frac{y+2}{3} \right). \]

For an IFS $\Psi$ on $\R^d$ and a nonempty $S_0 \subset \R^d$, we define the sequence $S_{n+1} = \Psi(S_{n})$.  We will typically consider $S_{0}$ which consists of finitely many points in $\R^d$, and therefore by Hutchinson's theorem, $\{ S_{n} \}$ converges in the Hausdorff metric to the fractal generated by $\Psi$.  Indeed, we will show that choosing $S_{0}$ to be the extreme points of the convex hull of the fractal is ideal in establishing our algorithm for calculating the persistence landscapes of the fractal.

%\begin{equation} \label{Eq:scales}
%S_{n+1} = \Psi(S_{n}).
%\end{equation}

One of our main results is to prove that for a fixed affine IFS $\Psi$ that satisfies a certain compatibility condition and appropriate initialization $S_{0}$, there exists an affine transformation $\mathcal{L}$ acting on the space of persistence landscapes such that the following diagram commutes for every $n \in \mathbb{N}$:

\begin{equation}\label{eq : com_dgm0}
\begin{tikzcd}
  S_{n} \arrow[r, "\Psi"] \arrow[d,"\gamma"]  & 
  S_{n+1} \arrow[d,"\gamma"]\\
  \mathbf{f}_{n} \arrow[r,"\mathcal{L}"] & \mathbf{f}_{n+1}
\end{tikzcd}
\end{equation}
Here $\mathbf{f}_{n}$ is the persistence landscape of $S_{n}$ and $\gamma$ associates to $S_{n}$ its persistence landscape.  We will show that $\mathcal{L}$ is a strict contraction on the set of persistence landscapes, and so it possesses a unique fixed point.  That fixed point will be the persistence landscape for the fractal generated by $\Psi$. Consequently, the persistence landscape $\mathbf{f}$ of the fractal is obtained by
\begin{equation}
    \mathbf{f} = \lim_{n \to \infty} \mathcal{L}^{n} \mathbf{f}_{0}
\end{equation}
for any initialization $\mathbf{f}_{0}$.

\subsection{Persistence Landscapes}
Persistent homology is a relatively new approach to 
studying topological spaces. In the context of data science,
persistent homology can be applied to a data set to complement 
traditional statistical approaches by studying the geometry 
of the data. 
%The idea of persistence has also been applied in
%similiarly theoretical contexts~\cite{adamaszek_vietorisrips_2017}.
We employ the tools of persistent
homology, including persistence landscapes, to analyze affine
fractals.

%%% Old stuff below here
Persistent homology typically begins with a set of points, equipped with a pairwise
notion of distance. We place a metric ball of radius $r$ around each
point, and increase $r$. We are interested in the topological
properties of the union of these balls \textit{as a function
of $r$}.  Typical properties of interest include connectedness, 
loops or holes, and voids of the union, and importantly the radii at which these appear or disappear.
%Other topological quantities of interest involve connectedness and the number of voids of the space, in addition to loops. 
While this type of information may seem crude, a surprising amount of insight about the underlying set of points can be extracted in this way. 

The data of changing topological properties is conveniently 
summarized in what is known as a {\em persistence diagram}, a multiset
of points in the plane.
If we focus on loops of the union, then each point $(b,d)$ in the 
persistence diagram represents a loop, whose two coordinates
correspond to the radius when the hole is formed ($r=b$) and when it
gets filled in ($r=d$).  The persistence diagram provides a
multiscale summary, encoding geometric and topological features of the set~\cite{carlsson_persistence_2005}.  
%These methods are coordinate free and non-parametric~\cite{oudot_persistence_2015}. 
 
% This is especially useful for
% studies of visual scans, which typically require costly image alignment.
% Finally, and most importantly for implementation, there are a variety of
% efficient algorithms for its computation~\cite{bauer_distributed_2013,
% bauer_clear_2014, harker_discrete_2014, mendoza-smith_parallel_2017,
% zhang_feature-based_2005}.
 
% The output of persistent homology is dispayed visually in what is known as a
% {\em barcode}. The barcode records the scales at which the topological
% features vary, and provides a user friendly visualization of high dimensional
% data.  This is the first type of visual summary provided by TDA. 

Unfortunately however, barcodes do not posses a vector space structure, so
quantitative analysis and precise comparison can be
difficult~\cite{munch_probabilistic_2015}. To remedy this, we map the barcodes
to some feature space (a Banach space in our case) using a well-studied feature
map known as a {\em persistence landscape}. The mapping from barcodes to landscapes is
reversible, so this vectorization scheme loses no information~\cite{bubenik2015statistical}. Persistence landscapes have been used to study protein binding~\cite{kovacev-nikolic_using_2016}, phase transitions~\cite{donato_persistent_2016}, audio signals~\cite{liu_applying_2016}, and microstructures in materials science~\cite{dlotko_topological_2016}.

%Persistence landscapes have a number of benefits over barcodes for quantitative
%analysis.  Most importantly, algebraic manipulations are easy and efficient to    
%perform, so direct comparisons between different times is simple.
%Persistence landscapes are also stable constructions, so they are well-suited
%for noisy and incomplete data. The mapping from barcodes to landscapes is
%reversible, so this vectorization scheme loses
%no information~\cite{bubenik_statistical_2012}. They are easily discretized and effective %software
%implementations are available~\cite{bubenik_persistence_2017, angeloro_pyscapes_2020}, making %them
%ideal for large data sets.

%As mentioned previously, there are other feature maps of barcodes studied in the literature. These 
%include persistence images, silhouettes, and some kernel based methods to name
%a few~\cite{chazal_stochastic_2015, adams_persistence_2017, le_persistence_2018,
%le_riemannian_2018,kusano_persistence_nodate, carriere_sliced_2017}. However, we propose landscapes for this particular
%problem due to their nonparametric nature, so there are no additional parameters
%to tune and interpret. Moreover, landscapes have several software
%implementations readily available~\cite{bubenik_persistence_2017, angeloro_pyscapes_2020}. %Lastly, their functional description is
%amenable to the application of functional transforms like the wavelet transform.

\section{Persistent Homology}

In this section, we briefly review some standard facts from
algebraic topology and persistent homology as well as establish our notation. Excellent resources for (simplicial) homology can
be found in~\cite{hatcher, munkres_eta}, and~\cite{edelsbrunner2010computational, carlsson_persistence_2005, perea_brief_2018} provide a good introduction to persistent homology.

%\subsection{A Review of Homology}
\subsection{Simplicial Complexes}
%Our objective is to compute properties of discrete
%approximations to affine fractals at various scales using the tools
%of persistent homology.
% We view these approximations as simplicial complexes embedded in $\mathbb{R}^{d}$.
% A good resource on homology is \cite{hatcher}.  A good description of simplicial homology can also be found in \cite{edelsbrunner2010computational}.  Our objective will be to compute properties of homology groups of discrete approximations to affine fractals at various scales.  Using a filtration, we will view these approximations as simplicial complexes in $\mathbb{R}^{d}$.

% A simplex is an affin
% Does the simplicial complex have to lie in R^d? For this paper no,
% but maybe the exposition is just easier if we force it to.
%%%
% 
% Let $v_0, v_1, \ldots, v_p$ be points in $\mathbb{R}^d$. We say v_0, ..., v_d
% are affinely independent if all the differences v_i - v_j are linearly
% independent. Given an affinely ind set, ...
For a simplicial complex $K$, the \textit{$p$-skeleton of $K$}, denoted by $K^{(p)}$, is the subcomplex consisting of all simplices of dimension less
than or equal to $p$. The set of all $p$-simplices is denoted $K_p$. 
Thus, the set of vertices can be written as $K_0$. Recall that if $\sigma = [u_0, u_1, \ldots, u_p]$ is a $p$-simplex, then every point $x \in \sigma$ can be expressed as a convex combination $x = \sum_i \alpha_i u_i$, where 
$0 \leq \alpha_i \leq 1$ and $\sum_i \alpha_i = 1$.

%If $K$ is a simplicial complex, we will write $|K|$ to denote the underlying space of the simplicial complex, which is the union of all its simplices endowed with the topology inherited from $\R^d$. The $p$-skeleton of $K$ is the subcomplex of $K$ consisting of all simplices of dimension less than or equal to $p$, denoted by $K^{(p)} := \{\sigma\in K | \dim\sigma \le p\}$.  An advantage of this notation is that the vertex set of $K$ can be written $K^{(0)}$. 

%%% Mike: Do we need to distinguish between an abstract simplicial complex and its geometric realization?

Given two simplicial complexes $K$ and $L$, %in $\mathbb{R}^{d}$, 
we say that $\varphi:K_0 \to L_0$ is a \emph{vertex map} if for any $p$-simplex $[x_0,...,x_p]$ in $K$, $[\varphi(x_0),...,\varphi(x_p)]$ is 
a simplex in $L$. Thus vertex maps send the vertices of simplices in $K$ to 
simplices in $L$. We do not require $\varphi$ to be injective, so $\dim[\varphi(x_0), ...,\varphi(x_p)]\le p$ with a strict inequality if $\varphi(x_j) = \varphi(x_k)$ for some $j,k\in\{0,1,...,p\}$.  Given a vertex map $\varphi:K_0\to L_0$, we can extend it to a map $f:K \to L$ by 
\[
f(x) = \sum_{j=0}^p\alpha_j\varphi(x_j) \,.
\]
where $x = \sum_{j=0}^p \alpha_j x_j$.  In this way, we say that $f$ is the simplicial map induced by $\varphi$.  
%%% Mike: Introduce linear combination

%\subsubsection{C\v ech Complex}

The first step in our goal of computing topological properties of affine fractals
will be to construct their C\v ech complexes.
% Our constructions of persistent homology filtrations will use the C\v ech complex.  
If $X\subset\R^d$, for any $\varepsilon>0$ we can define the C\v ech complex to be
\begin{equation} \label{eqn:Cech}
    \Cech(X,\varepsilon) = \left\{\sigma\subseteq X\:\bigg| \bigcap_{x\in\sigma}\overline{B(x,\varepsilon/2)}\not=\emptyset\right\} \, ,
\end{equation}
where $B(x, r)$ is the ball of radius $r$ centered at $x$.  A consequence of the Nerve Theorem \cite{BorsukKarol1948Otio,edelsbrunner2010computational} is that for a finite set $X\subset\R^d$, $\{ x : d(x,X) \leq \varepsilon/2 \}$ is homotopy equivalent to $\Cech(X,\varepsilon)$.

%\subsubsection{Vietoris-Rips Complex}
There is another popular variant in persistent homology for associating
a topological space to a set, known as the
Vietoris-Rips complex. The Vietoris-Rips complex for $X\subset\R^d$ and $\varepsilon>0$ is defined by
\begin{equation} \label{eqn:VR}
    \VRips(X,\varepsilon) = \left\{\sigma\subseteq X \big| \max_{x,y\in\sigma}|x-y|\le \varepsilon\right\} \, .
\end{equation}
In comparing Eqs.~\eqref{eqn:Cech} and~\eqref{eqn:VR}, we see that the 1-simplices
in $\Cech(X,\varepsilon)$ are the same as those in $\VRips(X,\varepsilon)$,
but it is not necessary that $\Cech(X,\varepsilon) = \VRips(X,\varepsilon)$.
Furthermore, verifying the existence of a point in the intersection of
Eq.~\eqref{eqn:Cech} often requires much more work than verifying
the \textit{pairwise} condition of Eq.~\eqref{eqn:VR}. For this reason,
together with recent advances in computational efficiency in software~\cite{bauer2021ripser}, applications of persistent homology
tend to rely on Vietoris-Rips complexes.
%We often work instead with the Vietoris-Rips
%complex, which tends to have much fewer simplices but is often homotopy
%equivalent to the C\v ech complex. 
%Furthermore, there are several fast
%software implementations for computing Vietoris-Rips complexes and their
%homology~\cite{bauer2021ripser}.
%Computing the C\v ech complex is computationally demanding, so a lot of software has been developed to compute a simpler complex known as the Vietoris-Rips complex.  For an example using Python, see \cite{bauer2021ripser}.  This complex is simpler because it only requires checking pairwise distances.  The Vietoris-Rips complex for $X\subset\R^d$ and $\varepsilon>0$ is defined by
%\begin{equation}
%    \VRips(X,\varepsilon) = \left\{\sigma\subseteq X \big| %\max_{x,y\in\sigma}|x-y|\le \varepsilon\right\}
%\end{equation}
%From this definition, we see that the 1-simplices in $\Cech(X,\varepsilon)$ are the same as those in $\VRips(X,\varepsilon)$, but it is not necessary that $\Cech(X,\varepsilon) = \VRips(X,\varepsilon)$.  
The two complexes are related by a well-known result~\cite[Thm. 2.5]{de_silva_coverage_2007}.

Our focus will be on the $\Cech$  complex of affine fractals and their approximations.

% \begin{lemma}
% For any finite set $X\subset\R^d$, $\varepsilon\ge 0$, we have
% \[\Cech(X,\varepsilon)\subseteq\VRips(X,\varepsilon)\subseteq \Cech(X,\sqrt{2}\varepsilon).\]
% \end{lemma}
% Our main results focus on the C\v ech complex rather than the Vietoris-Rips complex.

\subsection{A Review of Homology}
Given a simplicial complex $K$, an abelian group $G$, and a 
non-negative integer $p \geq 0$, we define the \textit{group of
$p$-chains with coefficients in $G$} to be formal $G$-linear combinations
of $p$-simplices of $K$ and denote it $C_p(K;G)$. A typical element
of $C_p(K;G)$ is a finite formal sum of the form $\sum_i g_i \sigma_i$, 
where $\sigma_i \in K_p$ and $g_i \in G$. The \textit{differential}
(or \textit{boundary}) of a $p$-simplex $\sigma = [u_0,u_1,\ldots,u_p]$ is 
\begin{equation}
\label{eqn:differential}
    \partial_p (\sigma) = \sum_{j=0}^p (-1)^j [u_0, u_1,\ldots,\hat{u}_j,
    \ldots,u_p] \in C_{p-1}(K;G) \, ,
\end{equation}
where $[u_0, \ldots, \hat{u}_j, \ldots u_p]$ is the $(p-1)$-simplex
obtained by omitting $\hat{u}_j$ from $\sigma$. Extending $\partial_p$
to a $G$-linear homomorphism to all of $C_p(K;G)$ gives $\partial_p:C_p(K;G) \to C_{p-1}(K;G)$.
% Given a simplicial complex $K$, for each dimension $p$ can we can form a group from its $p$-chains, which are just formal sums of $p$-simplices.  As in \cite{hatcher}, we define the group of $p$-chains, denoted $C_p(K;G)$, to be the formal sums of ordered simplices with coefficients from an abelian group $G$.  For each dimension $p$, we can also define the boundary operator $\partial_p$ by
% \begin{equation}\label{eq : boundary_hom}
%     \partial_p(a\sigma) := \sum_{j=0}^p (-1)^ja[x_0,...,\hat{x}_j,...,x_p]
% \end{equation}
% where $a \in G$, $\sigma$ consists of vertices $x_0,x_1,...,x_p$, and $[x_0,...,\hat{x}_j,...,x_p]$ is the $p-1$-simplex obtained by omitting $x_j$ from $\sigma$.  To define the boundary operator for all $C_p(K;G)$, just apply Equation~\eqref{eq : boundary_hom} linearly.  
This gives rise to the \textit{simplicial chain complex of $K$ with 
coefficients in $G$}
\begin{equation}
    \cdots \overset{\partial_{p+2}}\longrightarrow C_{p+1}(K)\overset{\partial_{p+1}}\longrightarrow C_{p}(K)\overset{\partial_{p}}\longrightarrow C_{p-1}(K)\overset{\partial_{p-1}}\longrightarrow \cdots \overset{\partial_{1}}\longrightarrow C_0(K)\overset{\partial_{0}}\longrightarrow 0 \, ,
\end{equation}
with the essential property that any two successive compositions 
equal the trivial map: $\partial_{j}\partial_{j+1} = 0$ for all $j \geq
0$. Hence, $\mathrm{im}(\partial_{j+1}) \subset \ker(\partial_j)$. 
We denote the \textit{$p$-cycles} by $Z_p(K) = \ker(\partial_p) \subset C_p(K)$,
and the \textit{$p$-boundaries} by $B_p(K) = \mathrm{im}(\partial_{p+1}) 
\subset C_p(K)$.

% Moving forward, we will assume $G = \Z_2$, unless otherwise indicated.  Thus, we will write $C_p(K) = C_p(K;\Z_2)$.  We refer to $c\in\ker\partial_p$ as a $p$-cycle.  For convenience, we will denote the group of $p$-cycles as $Z_p = Z_p(K)$.  If $c\in\image\partial_{p+1}$ we say that $c$ is a $p$-boundary.  We will denote the group of $p$-boundaries as $B_p = B_p(K)$.  

\begin{definition}
The \textbf{p-th simplicial homology group} of a simplicial complex $K$ is
\[H_p(K) = H_p(K;G):= \ker(\partial_p)/\mathrm{im}(\partial_{p+1}) = Z_p(K)/B_p(K) \, .\]
We let $H_*(K) = \bigoplus_p H_p(K)$ denote the collection of homology
groups for all dimensions $p$.
% For a set $X \in \mathbb{R}^{d}$, we let $H_{p} \Cech (X, \epsilon)$ denote the $p$-th homology group of $\Cech(X, \epsilon)$.  
%We let $H_p(K \Cech(X, \epsilon)$ denote the collection of homology groups for all dimensions $p$.
\end{definition}

Moving forward, we assume $G = \Z_2$. This choice of coefficients is
common in persistent homology and simplifies many of the 
computations, e.g., the factors of $(-1)^j$ appearing in
Eq.~\eqref{eqn:differential} vanish.

%\subsubsection{Induced Homomorphisms}

For simplicial complexes $K$ and $L$, $f:K\to L$ is a \textit{simplicial map} if $f$ is continuous and $f$ maps each simplex of $K$ linearly onto a simplex of $L$. 
%We can also consider $f|_{K^{(0)}}$ which maps the vertices of any $p$-simplex in $K$ onto the vertices of some simplex in $L$ of dimension at most $p$.  
Define a homomorphism $f_\#:C_p(K)\to C_p(L)$ by first defining
\begin{equation}\label{eq : induced_hom}
    f_\#([x_0,...,x_p]) =
    \begin{cases}
    [f(x_0),...,f(x_p)]  &\text{if}\:\: f(x_0),...,f(x_p)\:\:\text{are distinct}\\
    0  &\text{otherwise,}
    \end{cases} 
\end{equation}
and extend the homomorphism to the rest of $C_p(K)$ linearly.  A standard
fact in algebraic topology is that $f_\#$ further induces a
map on homology $f_*:H_p(K) \to 
H_p(L)$ for every $p$. Furthermore, if $f$ is a simplicial homeomorphism, then
$f_*:H_*(K) \to H_*(L)$ is an isomorphism~\cite{munkres_eta}.

% A useful result for our purposes, which is a combination of several results in \cite{munkres_eta}, is the following:

% \begin{thmalpha} %\label{thm : simp_hom}
% Let $K,L$ be simplicial complexes.  Suppose that $f:|K|\to |L|$ is a simplicial homeomorphism.  Then $f_\#$ commutes with the boundary operator, and hence $f_\#:H_p(K)\to H_p(L)$ is an isomorphism.  
% \end{thmalpha}

%A consequence of this result is that we can easily relate the homology groups of C\v ech and Vietoris-Rips complexes under simple transformations.

\begin{lemma}  \label{cor : sim_hom_isomorphism}
  Let $X\subset\R^d$ be a finite point cloud.  Let $\varphi:\R^d\to \R^d$ be a similitude with scaling constant $c>0$.  Let $\varepsilon > 0$, $L = \Cech(X,\varepsilon)$, and  let $\tilde{L} = \Cech(\varphi(X), c\varepsilon)$.  Then  $\varphi|_{L_0} : L_0 \to \tilde{L}_0$ is a vertex map and induces a simplicial homeomorphism $f$ between $L$ and $\tilde{L}$. 
  %Moreover, for $p\in \N_0$, $\varphi|_L$ induces an isomorphism between %$H_p(L)$ and $H_p(\tilde{L})$. 
  Thus, $f$ induces an isomorphism $f_*:H_*(L) \to H_*(\tilde L)$.
\end{lemma}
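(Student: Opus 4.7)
The plan is to leverage the defining property of a similitude, namely that it scales all distances uniformly, to show directly that $\varphi$ sets up a bijection between the simplices of $L$ and those of $\tilde{L}$ that is linear on each simplex. The key observation underpinning everything is that for any $x \in \R^d$ and any $r > 0$, a similitude $\varphi$ with ratio $c$ satisfies
\[
\varphi\bigl(\overline{B(x,r)}\bigr) \;=\; \overline{B(\varphi(x),\, cr)},
\]
since $|\varphi(y)-\varphi(x)| = c|y-x|$ and $\varphi$ is a bijection of $\R^d$.

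First I would verify that $\varphi|_{L_0}$ is a vertex map. Let $\sigma = [x_0,\dots,x_p]$ be a $p$-simplex of $L = \Cech(X,\varepsilon)$, so that by \eqref{eqn:Cech} we have $\bigcap_{i=0}^p \overline{B(x_i,\varepsilon/2)} \neq \emptyset$. Since $\varphi$ is a bijection on $\R^d$, it commutes with intersections, and combined with the ball identity above this gives
\[
\varphi\Bigl(\bigcap_{i=0}^p \overline{B(x_i,\varepsilon/2)}\Bigr) \;=\; \bigcap_{i=0}^p \overline{B(\varphi(x_i),\, c\varepsilon/2)},
\]
which is therefore nonempty. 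Hence $[\varphi(x_0),\dots,\varphi(x_p)]$ is a simplex of $\tilde L = \Cech(\varphi(X),\,c\varepsilon)$, proving that $\varphi|_{L_0}$ is a vertex map. The same argument applied to the similitude $\varphi^{-1}$ (which has ratio $1/c$) shows the converse: if $[\varphi(x_0),\dots,\varphi(x_p)]$ is a simplex of $\tilde L$, then $[x_0,\dots,x_p]$ is a simplex of $L$.

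Next I would extend $\varphi|_{L_0}$ to the simplicial map $f:L\to \tilde L$ induced by $\varphi|_{L_0}$ in the standard way, via $f(\sum_j \alpha_j x_j) = \sum_j \alpha_j \varphi(x_j)$ on each simplex. Since $\varphi$ is injective on $\R^d$, the image vertices $\varphi(x_0),\dots,\varphi(x_p)$ are distinct whenever $x_0,\dots,x_p$ are, so $f$ actually restricts to a linear homeomorphism from each simplex $\sigma$ of $L$ onto the corresponding simplex $\varphi(\sigma)$ of $\tilde L$. Combined with the two-sided simplex correspondence established in the previous paragraph, this exhibits $f$ as a simplicial homeomorphism $L \to \tilde L$.

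Finally, I would invoke the standard algebraic-topology fact cited in the excerpt just before the lemma: a simplicial homeomorphism induces an isomorphism on simplicial homology in every degree, hence $f_\ast : H_\ast(L)\to H_\ast(\tilde L)$ is an isomorphism, as claimed. There is no serious obstacle here; the only thing that needs a moment of care is the bijectivity of $\varphi$ and the consequent interchange with intersections, which is exactly what makes the ball identity promote to an identity of Čech nerves at the correspondingly rescaled parameter $c\varepsilon$.
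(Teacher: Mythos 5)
Your proof is correct. The paper states this lemma without giving a proof, and your argument supplies precisely the standard one the authors evidently intend: a similitude of $\R^d$ is a bijection scaling all distances by $c$, hence carries $\overline{B(x,\varepsilon/2)}$ onto $\overline{B(\varphi(x),c\varepsilon/2)}$ and commutes with intersections, so $\sigma$ is a simplex of $\Cech(X,\varepsilon)$ if and only if $\varphi(\sigma)$ is a simplex of $\Cech(\varphi(X),c\varepsilon)$; this two-sided simplex correspondence together with injectivity of $\varphi$ on vertices yields a simplicial homeomorphism, and the fact quoted just before the lemma (simplicial homeomorphisms induce isomorphisms on homology) completes the argument.
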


%\subsubsection{Direct Sums}
A particularly nice feature of homology groups is that the homology 
group of a space is isomorphic to the direct sum of the 
homology groups of the path components~\cite[Prop. 2.6]{hatcher}. 
This directly leads to the following lemma.
% As a simple consequence, we obtain the following lemmas.

%\begin{lemma} \label{lem : dir_sum_hom}
%  Suppose that $K$ is a disconnected simplicial complex with $\bigcup_{j=1}^n X_j= K^{(0)}$ such that for $j\not=k$, there is not a path in $|K|$ connecting $x\in X_j$ and $y\in X_k$.  Then for all dimensions $p\ge 0$, $H_p(K) \cong\bigoplus_{j=1}^n H_p(X_j)$. 
% \end{lemma}

\begin{lemma}\label{lemma : dir_sum_hom}
Let $X\subset \R^d$ be a finite subset and $\{\varphi_j\}_{j=1}^n$ be a collection of similitudes on $\R^d$.  Let $c_j$ equal the scaling constant of the similitude $\varphi_j$.  %Let $X_j = \varphi_j(X)$ and $X' = \bigcup_{j=1}^n X_j$, and 
Define
\[\delta :=\min_{1\le j\not= k\le n} d(\varphi(X_j),\varphi(X_k)).\]
%\ge c\rho_{\mathrm{conn}}(X).\]
Then for all dimensions $p\ge 0$, and all $\varepsilon <\delta$
\[H_p\Cech(\cup_{j} \varphi_j(X),\varepsilon)\cong \bigoplus_{j=1}^n H_p\Cech(X,c_j^{-1}\varepsilon).\]
\end{lemma}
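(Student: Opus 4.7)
The plan is to show first that the hypothesis $\varepsilon<\delta$ forces the C\v ech complex of the union to split as a disjoint union of the individual pieces, and then apply Lemma~\ref{cor : sim_hom_isomorphism} to each piece.

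Step 1: I would begin by arguing that for $\varepsilon<\delta$, no simplex of $\Cech(\cup_j \varphi_j(X),\varepsilon)$ can contain vertices from two different sets $\varphi_j(X)$ and $\varphi_k(X)$. Indeed, if $\sigma$ were such a simplex, pick $x\in\sigma\cap\varphi_j(X)$ and $y\in\sigma\cap\varphi_k(X)$ with $j\neq k$. Membership in the C\v ech complex requires $\overline{B(x,\varepsilon/2)}\cap\overline{B(y,\varepsilon/2)}\neq\emptyset$, which forces $|x-y|\le\varepsilon$. But $|x-y|\ge d(\varphi_j(X),\varphi_k(X))\ge\delta>\varepsilon$, a contradiction. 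Consequently every simplex lies entirely in one $\varphi_j(X)$, and
\[
\Cech\bigl(\cup_j \varphi_j(X),\varepsilon\bigr) \;=\; \bigsqcup_{j=1}^n \Cech\bigl(\varphi_j(X),\varepsilon\bigr),
\]
as a disjoint union of subcomplexes (equivalently, the geometric realizations are disjoint path components).

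Step 2: Since the chain complex of a disjoint union of simplicial complexes splits as a direct sum of the chain complexes of the pieces (and the boundary maps respect this splitting), I would invoke the standard fact that $H_p$ commutes with disjoint unions to conclude
\[
H_p\Cech\bigl(\cup_j \varphi_j(X),\varepsilon\bigr) \;\cong\; \bigoplus_{j=1}^n H_p\Cech\bigl(\varphi_j(X),\varepsilon\bigr).
\]

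Step 3: For each $j$, I would apply Lemma~\ref{cor : sim_hom_isomorphism} with the similitude $\varphi_j$ of scaling constant $c_j$ and with the scale parameter chosen as $c_j^{-1}\varepsilon$. The lemma produces a simplicial homeomorphism between $\Cech(X,c_j^{-1}\varepsilon)$ and $\Cech(\varphi_j(X),c_j\cdot c_j^{-1}\varepsilon)=\Cech(\varphi_j(X),\varepsilon)$, and hence an isomorphism $H_p\Cech(\varphi_j(X),\varepsilon)\cong H_p\Cech(X,c_j^{-1}\varepsilon)$. Substituting into the direct sum from Step 2 yields the claimed isomorphism.

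The only mildly subtle point is Step 1, specifically verifying carefully that the strict inequality $\varepsilon<\delta$ (rather than $\varepsilon\le\delta$) is what one needs when working with closed balls in the definition of the C\v ech complex, so that cross-cluster edges and higher simplices are truly excluded. Once this is nailed down, the rest is a clean combination of the disjoint-union property of simplicial homology and the previous lemma; no further technicalities are expected.
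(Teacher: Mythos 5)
Your proof is correct and follows essentially the same route the paper intends: the paper derives this lemma from the fact that homology splits as a direct sum over path components (its remark preceding the statement) together with Lemma~\ref{cor : sim_hom_isomorphism} applied to each image at scale $c_j^{-1}\varepsilon$, which is exactly your Steps 1--3. Your explicit verification that $\varepsilon<\delta$ excludes all cross-cluster simplices (not just edges) is the right point to be careful about and is handled correctly.
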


Lemma~\ref{lemma : dir_sum_hom} implies that given a finite point cloud and a collection of similitudes, so long as the images of the point cloud under those similitudes are 
sufficiently far apart, the homology group resulting from the union of those images is easily related to the homology groups resulting from the original point cloud.  % We formalize this in the next lemma.

%\begin{proof}
%For $\varepsilon <\delta$, we have that $\Cech(X_j,\varepsilon)$ is not path-connected to $C(X_k,\varepsilon)$ in the geometric realization of $C(X',\varepsilon)$.  By Corollary~\ref{cor : dir_sum_hom}, this implies 
%\begin{equation}\label{eq : dirsum_proof1}
%    H_p\Cech(X',\varepsilon)\cong \bigoplus_{j=1}^n %H_p\Cech(X_j,\varepsilon).
%\end{equation}
%By Corollary~\ref{cor : sim_hom_isomorphism}, we know that for $1\le j\le n$, 
%\begin{equation}\label{eq : dirsum_proof2}
%    H_p\Cech(X_j,\varepsilon)\cong H_p\Cech(X,c_j^{-1}\varepsilon).
%\end{equation}
%Combining Equations \eqref{eq : dirsum_proof1} and \eqref{eq : dirsum_proof2} gives us the desired isomorphism.
%\end{proof}

%\subsection{Mayer-Vietoris Sequences}

%We presented some nice results related to homomorphisms induced by maps between topological spaces.  In algebraic topology, we can often construct a sequence of these homomorphisms that is useful for calculations.  The type of sequence we are reffering to is an exact sequence.  
% Recall the definition of an exact sequence:
% \begin{definition}
% For groups $G_1,...,G_n$, and homomorphisms $\varphi_1,...,\varphi_{n-1}$, we say the sequence
% \[G_1\overset{\varphi_1}{\longrightarrow}G_2\overset{\varphi_2}{\longrightarrow}G_3\overset{\varphi_3}{\longrightarrow}...\overset{\varphi_{n-1}}{\longrightarrow}G_n\]
% is an \textbf{exact sequence} if $\image\varphi_j = \ker \varphi_{j+1}$ for all $j\in\{1,...,n-1\}$.
% \end{definition}

Another tool we will use for the computation of homology groups is known
as the \textit{Mayer-Vietoris sequence}. 
% The exact sequence from algebraic topology that will be most useful in our calculations is known as the \emph{Mayer-Vietoris Sequence}.  
Suppose $K$ is a simplicial complex with subcomplexes $K_1$ and $K_2$ such that $K = K_1\cup K_2$.  The Mayer-Vietoris sequence is the long
exact sequence
\begin{equation}\label{eq : Mayer-Vietoris}
\begin{split}
     \cdots\longrightarrow& H_p(K_1\cap K_2)\longrightarrow H_p(K_1)\oplus H_p(K_2)\longrightarrow H_p(K)\longrightarrow H_{p-1}(K_1\cap K_2)\longrightarrow\cdots\\
     & \cdots \longrightarrow H_0(K_1\cap K_2)\longrightarrow H_0(K_1)\oplus H_0(K_2)\longrightarrow H_0(K)\longrightarrow 0.
\end{split}
\end{equation}
For a detailed explanation of the maps in the sequence and a 
proof of its exactness, see~\cite[Chapter 3]{munkres_eta}.  

\subsection{Persistent Homology}
A \textit{simplicial filtration} $K^\bullet$ of a simplicial complex
$K$ is a collection of subcomplexes $\{K^i\}_{i \in I}$, for
an indexing set $I \subset \R$ such that whenever $s \leq t$,
% For a simplicial complex $K$, we obtain persistent homology 
% by a simplicial filtration of $K$ $\{K_t\}_{t\in I}$, where for all $s\le t$, 
\begin{equation}\label{eq : filtration}
K^s\subseteq K^t\subseteq K.
\end{equation}
% Since we have containment, 
The inclusion map $K^s\to K^t$ induces a homomorphism 
$H_*(K^s)\to H_*(K^t)$. The collection of groups $\{H_p(K^s)\}_{s\in I}$, together with the collection of homomorphisms $\{H_p(K^s) 
\to H_p(K^t)\}_{s \leq t}$ form the 
\textit{$p^{\mathrm{th}}$-persistent homology of $K^{\bullet}$}. 

Understanding the structure of persistent homology is difficult
as presented. Instead, we turn to a generalization of this structure
known as a persistence module.
A \textit{persistence module} is a collection of $\Z_2$-vector spaces
$\mathbb{V} = \{V_t\}_{t\in I}$, and a collection of linear transformations
between these spaces $\mathbb{M} = \{M^{s,t}:V_s\to V_t\}_{s\le t}$ that satisfy
\begin{equation}\label{eq : module_comp}
M^{s,t}\circ M^{r,s} = M^{r,t}\:\:\:\text{for}\:\: r\le s\le t.
\end{equation}
We will often refer to the persistence module $(\mathbb{V},\mathbb{M})$ as simply $\mathbb{M}$. 
By setting $V_s = H_p(K^s)$ and $M^{s,t}:H_p(K^s) \to H_p(K^t)$ as the homomorphism induced by inclusion, we
see that every simplicial filtration $K^{\bullet}$ gives rise to a persistence module.

If $X\subset\R^d$, then the collection of simplicial complexes 
\[\CechMod(X) = \{\Cech(X,s)\}_{s\in\overline{\R}} \]
form a simplicial filtration of $\Cech(X,\infty)$, the full
simplicial complex on $X$.
% satisfy the containment in Equation~\eqref{eq : filtration}. 
For $0\le s\le t$, the inclusion mapping $\Cech(X,s)\to\Cech(X,t)$ induces a homomorphism $H_p\Cech(X,s)\to H_p\Cech(X,t)$.  We let $H_p\CechMod(X)$ denote the persistence module resulting from the filtration of the C\v ech complex of $X$.  
% Again, we may suppress the subscript $p$ when there is no need to specify a dimension of homology.  
We adopt the convention that 
\[\Cech(X,s) = \VRips(X,s) = \emptyset\]
for $s<0$.  

A persistence diagram is a multiset of birth and death times derived from a decomposition of a persistence module.  Interval modules form the basic building blocks of persistence modules.  Interval modules are indecomposable.  In \cite{botnan2019decomposition}, it was shown that the persistence module $(\mathbb{V},\mathbb{M})$ can be decomposed into interval modules as long as each $V_t$ is finite dimensional.  Moreover, an application of~\cite[Thm. 1]{AzumayaGoro1950CaSt}, implies this decomposition is unique, up to reordering.  Thus if we have a persistence module $\mathbb{M}$ with the interval module decomposition
\[\mathbb{M} = \bigoplus_{\lambda}\mathbb{J}[b_\lambda,d_\lambda),\]
we define the persistence diagram of $\mathbb{M}$ to be the multiset
\[\dgm\mathbb{M} = \{(b_\lambda,d_\lambda)\}_{\lambda}\cup\Delta,\]
where $\Delta$ is the set of diagonal points $\{(x,x)|x\in\R\}$ each counted with infinite multiplicity.  

We will denote a persistence diagram as $\mathcal{D} = (D,\mu)$, where $D =\{(b_\lambda,d_\lambda)\}_{\lambda}$ is the collection of distinct birth and death times and $\mu:D\to \N_0$ is defined $\mu(b_\lambda,d_\lambda)$ as the multiplicity of the birth death time $(b_\lambda,d_\lambda)$.  

In order to ensure that the persistence modules we consider have a decomposition into interval modules, we require that $\mathbb{M}$ is q-tame \cite{chazal2013structure}.  
\begin{definition}
The persistence module $(\mathbb{V},\mathbb{M})$ indexed over $I\subset\R$ is said to be \textbf{q-tame} if
\[\rank M^{s,t}<\infty\:\:\text{whenever}\:\: s<t.\]
\end{definition}
Assuming a persistence module is q-tame not only guarantees us a well-defined persistence diagram, but also stability with respect to the bottleneck distance \cite{chazal:inria-00292566}.  A prior stability result presented by Cohen-Steiner et al. in \cite{Cohen-SteinerDavid2007SoPD} which required a more restrictive tameness condition. 

Persistence modules resulting from a C\v ech filtration built on a finite point cloud $X$ are $q$-tame. Moreover, when $X\subset\R^d$ is compact,
$H_p\CechMod(X)$ is $q$-tame~\cite{chazal2013persistence} for all $p$. As
a result, we obtain the following:
\begin{proposition}
For any affine iterated function system, the invariant set $A$, and any finite approximation of $A$, possess a well-defined persistence diagram.
\end{proposition}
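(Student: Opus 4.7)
The plan is to package two standard facts that the preceding exposition has already laid out: (i) the Čech persistence module of a compact subset of $\mathbb{R}^d$ is q-tame, and (ii) any q-tame persistence module decomposes essentially uniquely into interval modules, and hence has a well-defined persistence diagram. The proposition then follows once I check that both $A$ and its finite approximations satisfy the required compactness/finiteness hypotheses.

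First, I would dispose of the finite-approximation case. A finite approximation $S_n$ of $A$ is a finite point cloud in $\mathbb{R}^d$, so for every $s < t$ the inclusion $\Cech(S_n,s) \hookrightarrow \Cech(S_n,t)$ is a map between finite simplicial complexes. Hence the induced map on $p$-th homology has finite rank, so $H_p\CechMod(S_n)$ is q-tame. By the Chazal--Crawley-Boevey decomposition theorem cited in the preceding paragraph (together with the Azumaya--Krull--Schmidt uniqueness), the module splits uniquely as a direct sum of interval modules, which, by the paper's definition, yields a well-defined persistence diagram.

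Next, for the invariant set $A$ itself, I would invoke Hutchinson's theorem (Theorem~\ref{thm : Hutch1981}): since the maps $\psi_j(\vec{x}) = c(\vec{x} + \vec{b}_j)$ with $c \in (0,1)$ are strict contractions on the complete metric space $\mathbb{R}^d$, the unique invariant set $A$ is compact. The result of Chazal--de Silva--Oudot cited just before the statement then applies verbatim: for a compact $X \subset \mathbb{R}^d$, $H_p\CechMod(X)$ is q-tame for every $p$. Applying the decomposition theorem once more produces a well-defined persistence diagram $\dgm H_p\CechMod(A)$.

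There is essentially no obstacle here; the only subtlety worth flagging is that in the compact (as opposed to finite) case the individual vector spaces $H_p \Cech(A,t)$ need not be finite-dimensional, so one genuinely needs the q-tameness criterion rather than pointwise finite-dimensionality to invoke the decomposition theorem of Chazal et al. Once that is noted, both halves of the proposition reduce to citing the two results already assembled in the preceding paragraph, and the proof is complete.
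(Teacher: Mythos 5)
Your proposal is correct and follows exactly the route the paper intends: the proposition is stated as an immediate consequence of the preceding paragraph, namely that finite point clouds yield q-tame \v Cech persistence modules trivially, that compact subsets of $\R^d$ (which $A$ is, by Hutchinson's theorem) yield q-tame modules by the cited result of Chazal et al., and that q-tameness guarantees a well-defined persistence diagram. Your added remark about needing q-tameness rather than pointwise finite-dimensionality in the compact case is a fair and accurate clarification of the same argument.
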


%\subsection{Persistence Landscapes}

%\subsubsection{Persistence Landscape Stability}
While persistence diagrams are an effective representation of a persistence module, they are not conducive to statistical analysis.  Persistence landscapes address this issue by embedding persistence diagrams into a Banach space. % $L^\infty(\N\times\R)$.
Given a persistence module $\mathbb{M}$ that is $q$-tame, its \textit{persistence landscape}
is a function $\mathbf{f}:\N\times \R \to\overline{\R}$ by
\[\mathbf{f}(n,t) := f^{(n)}(t) = \sup\{m\ge 0|\:\rank M^{t-m,t+m}\ge n\} \, ,\]
where $\overline{\R}$ denotes the extended real numbers, $[-\infty, \infty]$.
We have that all persistence landscapes are elements of 
\[L^\infty(\N\times \R):= \left\{\mathbf{g} = \{g^{(n)}\}_{n=1}^\infty \subset L^\infty(\R)\bigg| \sup_{n\in\N} \|g^{(n)}\|_{L^\infty(\R)} <\infty\right\}.\]
We compute the distance between two persistence landscapes using the standard norm on $L^\infty(\N\times\R)$ which is defined as
\[\|\mathbf{g}\|_{L^\infty(\N\times\R)} = \sup_{n\in\N}\|g^{(n)}\|_{L^\infty(\R)}.\]

There is an alternative definition given in \cite{BubenikPeter2020TPLa} that allows us to relate the persistence landscape of $\mathbb{M}$ to its persistence diagram.  If the persistence module $\mathbb{M}$ is represented as a persistence diagram $D = \{(a_i,b_i)\}_{i\in I}$, then we can define the % ``hat" 
functions
\begin{equation}\label{eq : hatfunc_def}
    \tau_{(a,b)}(t) = \max(0,\min(a+t,b-t)),
\end{equation}
and for all $k\in\N$, $t\in\R$,
\begin{equation}\label{eq : kmaxdef}
f^{(k)}(t)=\text{kmax}\{\tau_{(a_i,b_i)}(t)\}_{i\in I}.
\end{equation}
We use $\text{kmax}$ to denote the $k$th largest element of a set.  Note that since $D$ is a multiset, certain birth death pairs $(a_i,b_i)$ can appear more than once.  

It is common for many stability results to write the distance between two persistence landscapes in terms of their corresponding persistence modules.  If $\mathbf{f} = \{f^{(k)}\}_{k=1}^\infty$ is the persistence landscape obtained from $\mathbb{M}$ and $\mathbf{g}=\{g^{(k)}\}_{k=1}^\infty$ is the persistence landscape obtained from $\mathbb{M}'$, we define the persistence landscape distance between $\mathbb{M}$ and $\mathbb{M}'$ to be
\begin{equation}
\Lambda_\infty(\mathbb{M},\mathbb{M}') := \| \mathbf{f} - \mathbf{g} \|_{L^\infty(\N\times\R)}.
\end{equation}

By combining the stability results from \cite{bubenik2015statistical,chazal2013persistence,chazal:inria-00292566}, we obtain the following:
%\cite[Theorem 5.6]{chazal2013persistence} and \cite{chazal:inria-00292566} 

\begin{thmalpha} \label{Th:stability}
Suppose $X$ is a metric space, and $Y,Z \subset X$ have well-defined persistence diagrams.  Then for all $p \geq 0$,
\[ \Lambda_\infty(H_p\CechMod(Y),H_p\CechMod(Z))\le d_{H}(Y,Z)\, . \]
\end{thmalpha}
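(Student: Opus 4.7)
The plan is to derive the bound by chaining two well-known stability theorems from the persistence literature, which together constitute the combination of results flagged in the sentence preceding the statement. First, I would invoke Bubenik's landscape stability theorem \cite{bubenik2015statistical}: for any two q-tame persistence modules $\mathbb{M}$ and $\mathbb{M}'$ with landscapes $\mathbf{f}$ and $\mathbf{g}$,
$$\|\mathbf{f}-\mathbf{g}\|_{L^\infty(\N\times\R)}\le d_B(\dgm\mathbb{M},\dgm\mathbb{M}'),$$
where $d_B$ is the bottleneck distance. The quickest justification uses the tent-function representation \eqref{eq : hatfunc_def}--\eqref{eq : kmaxdef}: each $\tau_{(a,b)}$ is $1$-Lipschitz in the pair $(a,b)$ under the sup norm, and passing to a $k$th-order maximum through an optimal bottleneck matching preserves this bound pointwise in $t$ and uniformly in $k$.

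Second, I would invoke the bottleneck stability theorem for q-tame \v Cech persistence modules \cite{chazal:inria-00292566,chazal2013persistence}: whenever $Y,Z\subset X$ admit well-defined diagrams,
$$d_B\bigl(\dgm H_p\CechMod(Y),\,\dgm H_p\CechMod(Z)\bigr)\le d_H(Y,Z).$$
Writing $\delta=d_H(Y,Z)$, each $y\in Y$ lies within distance $\delta$ of some $z\in Z$, and symmetrically; these correspondences induce compatible inclusions of \v Cech complexes at shifted parameters, which assemble into a $\delta$-interleaving of the two \v Cech persistence modules, and the algebraic stability theorem bounds bottleneck distance by interleaving distance. Specializing the first step to $\mathbb{M}=H_p\CechMod(Y)$ and $\mathbb{M}'=H_p\CechMod(Z)$ and inserting this estimate yields $\Lambda_\infty\le d_B\le d_H(Y,Z)$, which is the claim.

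The only real subtlety is bookkeeping with parameter normalization. The \v Cech filtration here is indexed by $\varepsilon$ with balls of radius $\varepsilon/2$, so the $\delta$-interleaving on sublevel sets of the distance functions to $Y$ and $Z$ has to be translated carefully to the correct constant on the $\varepsilon$-indexed \v Cech modules, and this constant must be consistent with Bubenik's landscape normalization in order for the claimed factor of $1$ (as opposed to $2$) in front of $d_H(Y,Z)$ to come out right. Once this is settled, no further work beyond assembling the citations is required; q-tameness, which legitimizes both cited stability theorems, is precisely the hypothesis that $Y$ and $Z$ admit well-defined diagrams.
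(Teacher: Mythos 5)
Your two-step chain --- landscape stability ($\Lambda_\infty \le d_B$) followed by bottleneck/interleaving stability for C\v ech filtrations ($d_B \le d_H$) --- is exactly how the paper obtains this statement, which it presents as a lettered theorem with no argument beyond the remark that it follows by combining \cite{bubenik2015statistical,chazal2013persistence,chazal:inria-00292566}. The normalization caveat you flag is the one genuinely delicate point: with the paper's diameter convention ($\overline{B(x,\varepsilon/2)}$), a Hausdorff perturbation of size $\delta$ yields a $2\delta$-interleaving in the parameter $\varepsilon$, so one must either pass to the radius convention or accept a factor of $2$; the paper does not address this, and in its subsequent applications only the qualitative bound is used.
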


\begin{corollary}
Let $(X,d)$ be a metric space and suppose $S, T \subset X$ such that their
C\v{e}ch complexes have well-defined persistence diagrams. Then % we have
\[ \| \mathbf{f} - \mathbf{g} \|_{L^\infty(\N\times\R)} \leq d_{H}(S,T), \]
where $\mathbf{f}$ and $\mathbf{g}$ are the persistence landscapes of $S$ and $T$, respectively.
\end{corollary}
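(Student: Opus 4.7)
The plan is to observe that this corollary is an essentially immediate consequence of Theorem \ref{Th:stability}, obtained by unwinding the definition of the landscape distance $\Lambda_\infty$. There is no new mathematical content to produce; the work reduces to matching symbols.

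First I would fix a dimension $p \geq 0$ and let $\mathbb{M} = H_p\CechMod(S)$ and $\mathbb{M}' = H_p\CechMod(T)$. Since $S$ and $T$ are assumed to have well-defined persistence diagrams, the persistence modules $\mathbb{M}$ and $\mathbb{M}'$ admit interval decompositions, and hence give rise to persistence landscapes $\mathbf{f} = \{f^{(k)}\}_{k=1}^{\infty}$ and $\mathbf{g} = \{g^{(k)}\}_{k=1}^{\infty}$, either through the rank definition $f^{(n)}(t) = \sup\{m \ge 0 : \rank M^{t-m,t+m} \ge n\}$ or equivalently through the tent-function description in \eqref{eq : hatfunc_def}--\eqref{eq : kmaxdef}.

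Next I would simply invoke the definition
\[
\Lambda_\infty(\mathbb{M},\mathbb{M}') = \|\mathbf{f} - \mathbf{g}\|_{L^\infty(\N\times\R)},
\]
so that the left-hand side of the claimed inequality is literally $\Lambda_\infty(H_p\CechMod(S), H_p\CechMod(T))$. Applying Theorem \ref{Th:stability} with $Y = S$ and $Z = T$ then yields
\[
\|\mathbf{f} - \mathbf{g}\|_{L^\infty(\N\times\R)} = \Lambda_\infty(H_p\CechMod(S), H_p\CechMod(T)) \le d_H(S,T),
\]
which is the desired bound.

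The only potential subtlety, and the one step worth flagging, is ensuring that the corollary is interpreted dimension-by-dimension: the landscapes $\mathbf{f}$ and $\mathbf{g}$ correspond to a fixed homological degree $p$, matching the way Theorem \ref{Th:stability} is stated. Once this is noted, the proof is purely a restatement. The substantive stability content is already absorbed into Theorem \ref{Th:stability}, which packages together the bottleneck stability for q-tame modules from \cite{chazal:inria-00292566} with the $L^\infty$-stability of landscapes relative to the bottleneck distance from \cite{bubenik2015statistical}.
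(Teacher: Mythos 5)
Your proof is correct and matches the paper's (implicit) reasoning exactly: the corollary is an immediate restatement of Theorem \ref{Th:stability} once one unwinds the definition of $\Lambda_\infty$ as the $L^\infty(\N\times\R)$ distance between the associated landscapes. Your remark about fixing the homological degree $p$ is the right point to flag, and nothing further is needed.
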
 \label{C:stability}

\section{The Persistence Landscapes of Affine Fractals} \label{sec:affine}

\subsection{The Persistence Landscape of the Cantor Set} \label{ssec:cantor3}
We begin with calculating the persistence landscape of the middle-third Cantor set $\mathcal{C}$.  This calculation illustrates the main ideas of our construction, while also highlighting some of the technical requirements on the IFS.  Recall that $\mathcal{C}$ is generated by the IFS given by the collection of contraction maps $\Psi = \{\psi_0,\psi_2\}$, where
\[\psi_0(x) = \frac{x}{3},\:\:\psi_2(x) = \frac{x}{3}+\frac{2}{3}.\]
For convenience, we suppose that $\Psi$ acts on the metric space $X= [0,1]$ with the standard metric.  We define the sequence of approximations to $\mathcal{C}$ as follows:
\begin{equation} \label{eq:cantor3}
S_0 = \{0,1\},\:\:S_n =\Psi(S_{n-1}) = \psi_0(S_{n-1}) \cup \psi_2(S_{n-1})\:\text{for}\:\:n\in\N.
\end{equation}
We also define
\[ C_{0} = [0,1],\:\:C_{n} = \Psi(C_{n-1}). \]
The length of each closed interval in $C_n$ is $1/3^n$.  Note that since the points in $S_n$ are equal to the end points in the disjoint closed intervals that make up $C_n$, and we have $S_{n} \subset \mathcal{C}\subset C_n$ for all $n\in \mathbb{N}$, we find
\[ d_H(S_n, \mathcal{C}) \leq d_H(S_n, C_n) \le \frac{1}{2 \cdot 3^n}.\]
As a consequence of this and Lemma \ref{C:stability}, we obtain the following convergence result.
%\[\Lambda_\infty(\mathbb{M}(f_n), \mathbb{M}(f_\mathcal{C}))\le 2d_H(S_n,\mathcal{C})\le 2d_H(S_n, C_n) \le 2\cdot\frac{1}{2}\frac{1}{3^n}=\frac{1}{3^n}.\]

\begin{theorem}\label{thm : C_landscapes}
Let $\{ \mathbf{f}_{n} \}_{n=1}^{\infty}$ be the sequence of persistence landscapes generated from the sequence of point clouds $\{S_n\}_{n=1}^\infty$ as in Equation \eqref{eq:cantor3}, and let $\mathbf{f}$ be the persistence landscape of $\mathcal{C}$. Then, we have that
\[ \lim_{n \to \infty} \| \mathbf{f}_{n} - \mathbf{f} \|_{L^\infty(\N\times\R)} = 0. \]
\end{theorem}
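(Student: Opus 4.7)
The plan is to observe that this theorem is essentially an immediate corollary of the stability result (Corollary \ref{C:stability}) combined with the Hausdorff bound already derived in the paragraph preceding the statement. The proof reduces to (i) verifying that both the approximants $S_n$ and the limit $\mathcal{C}$ have well-defined persistence diagrams so that Corollary \ref{C:stability} applies, and (ii) reading off the convergence from the geometric decay of $d_H(S_n, \mathcal{C})$.

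More concretely, first I would invoke the proposition stated just after the definition of q-tameness: each $S_n$ is a finite point cloud in $\R$ and $\mathcal{C} \subset \R$ is compact, so in both cases the Čech persistence module $H_p \CechMod(\,\cdot\,)$ is q-tame for every $p \geq 0$. Consequently, both $\mathbf{f}_n$ and $\mathbf{f}$ are well-defined elements of $L^\infty(\N \times \R)$.

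Next I would apply Corollary \ref{C:stability} with $S = S_n$ and $T = \mathcal{C}$ to obtain
\[
\|\mathbf{f}_n - \mathbf{f}\|_{L^\infty(\N \times \R)} \leq d_H(S_n, \mathcal{C}).
\]
Combining this with the bound $d_H(S_n, \mathcal{C}) \leq \tfrac{1}{2 \cdot 3^n}$ established just before the statement of the theorem (using the nesting $S_n \subset \mathcal{C} \subset C_n$ and the fact that $C_n$ is a union of closed intervals of length $3^{-n}$), and taking $n \to \infty$, gives the claimed convergence.

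The main obstacle here is not really an obstacle: the only content beyond citing the stability corollary is the q-tameness verification for the limiting compact set $\mathcal{C}$, which we quote from \cite{chazal2013persistence}. I would keep the write-up brief, emphasizing that this theorem serves as a consistency check justifying the approximation scheme and as a template for the general affine IFS setting treated later — namely, that Hausdorff convergence of the Hutchinson iterates automatically upgrades to $L^\infty$ convergence of their persistence landscapes.
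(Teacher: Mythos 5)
Your proposal is correct and follows exactly the route the paper intends: the bound $d_H(S_n,\mathcal{C})\le \tfrac{1}{2\cdot 3^n}$ established just before the theorem, combined with the stability corollary (Corollary \ref{C:stability}), with q-tameness of $H_p\CechMod(\mathcal{C})$ for the compact set $\mathcal{C}$ guaranteeing that the landscapes are well defined. The paper treats this as an immediate consequence and gives no separate proof, so there is nothing to add.
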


Knowing that the limit exists, we would still like to have a formula for the persistence landscape of $\mathcal{C}$.  To help us determine this formula, we will use a two step approach.  First, we will find an affine operator $\mathcal{L}:L^\infty(\N\times\R)\to L^\infty(\N\times\R)$ such that for all $n$ sufficiently large, $\mathcal{L}$ maps the persistence landscape of $S_n$ to the persistence landscape of $S_{n+1}$.  We will then show that the persistence landscape of $\mathcal{C}$ is the fixed point of $\mathcal{L}$ and use $\mathcal{L}$ to compute the fixed point.

%We seek an affine operator $\mathcal{L}:L^\infty(\N\times\R)\to L^\infty(\N\times\R)$ with Lipschitz constant less than 1.  This means that for all $\mathbf{f}\in L^\infty(\N\times\R),\:\: \mathcal{L}\mathbf{f} = \mathbf{g}+ \mathcal{T}\mathbf{f}$, where $\mathbf{g}\in L^\infty(\N\times\R)$, $\mathcal{T}:L^\infty(\N\times\R)\to L^\infty(\N\times\R)$ is linear, and $\| \mathcal{T} \|<1$.  This implies that $\mathcal{L}$ is a contraction and has a unique fixed point.

To describe persistence landscapes, we will use the hat functions defined in Equation~\eqref{eq : hatfunc_def}.  Recall that persistence landscapes $\mathbf{h}\in L^\infty(\N\times\R)$ are defined by letting $\mathbf{h} = \{h^{(j)}\}_{j=1}^\infty$, where $h^{(j)}$ equals the $j$th largest hat function supported on an interval whose end points equal one of the birth-death pairs from the persistence diagram.  We also note that the hat functions scale nicely so that for $t\in\R$
\[\tau_{(a,b)}(t) = (b-a)\tau_{(0,1)}((b-a)^{-1}(t-a)).\]
We define $\mathcal{L}$ for each $\mathbf{g}\in L^\infty(\N\times\R)$ by 
$\mathcal{L}\mathbf{g} = \mathbf{h} = \{h^{(j)}\}_{j=1}^\infty$ where
\begin{align} \label{eq:L-cantor3}
h^{(1)}(x) &= \tau_{(0,1)}(x),\\
h^{(2)}(x) &= \frac{1}{3}g^{(1)}(3x) , \notag \\
h^{(2k+1)}(x)&=h^{(2k +2)}(x) = \frac{1}{3}g^{(k)}(3x),\:\:k\in\N \, . \notag
\end{align}
We adopt the convention that the maximum death time will be equal to the diameter of the invariant set, which in the case of $\mathcal{C}$ is 1.  Thus when computing persistence landscape of $\mathcal{C}$, the first function in the sequence will be $\tau_{(0,1)}$.  We can see that $\mathcal{L}$ is an affine contraction.  Indeed, if we define $\mathcal{T}:L^\infty(\N\times\R)\to L^\infty(\N\times\R)$ by $\mathcal{T}\mathbf{g} = \mathbf{h}$, where
\[\begin{split}
h^{(1)}(x) &= 0,\\
h^{(2)}(x) &= \frac{1}{3}g^{(1)}(3x)\\
h^{(2k+1)}(x)&=h^{(2k +2)}(x) = \frac{1}{3}g^{(k)}(3x) \, ,\\
\end{split}\]
then we % can 
see that $\mathcal{T}$ % :L^\infty(\N\times\R)\to L^\infty(\N\times\R)$ 
is linear with $\|\mathcal{T}\| = \frac{1}{3}$. Further, for all $\mathbf{f}\in L^\infty(\N\times\R)$, $\mathcal{L}\mathbf{f} = \mathbf{v} + \mathcal{T}\mathbf{f}$, where $\mathbf{v} = \{\tau_{(0,1)}, 0,0,0,...\}$.  
\begin{theorem}\label{thm : cantorset_op} 
Let $\{ \mathbf{f}_{n} \}_{n=1}^{\infty}$ be the sequence of persistence landscape generated from the sequence of point clouds $\{S_n\}_{n=1}^\infty$ as in Equation \eqref{eq:cantor3}, and let $\mathbf{f}$ be the persistence landscape of $\mathcal{C}$. Let $\mathcal{L}$ be given by Equation \eqref{eq:L-cantor3}.  Then, for all $n\in\N$, $\mathcal{L}\mathbf{f}_n = \mathbf{f}_{n+1}$.
\end{theorem}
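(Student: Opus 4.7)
The plan is to prove $\mathcal{L}\mathbf{f}_n = \mathbf{f}_{n+1}$ by induction on $n$, first computing $\dgm H_0\CechMod(S_{n+1})$ from $\dgm H_0\CechMod(S_n)$ and then reading off the resulting landscape via Equation \eqref{eq : kmaxdef}. The essential geometric input is that $S_{n+1} = \psi_0(S_n) \cup \psi_2(S_n)$ with $\psi_0(S_n) \subset [0, 1/3]$ and $\psi_2(S_n) \subset [2/3, 1]$, so the two sub-point-clouds are separated by distance exactly $1/3$, realized uniquely by the pair $\{1/3, 2/3\}$. This lets me split the analysis into the two regimes $\varepsilon < 1/3$ and $\varepsilon \geq 1/3$.

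For $\varepsilon < 1/3$, I would combine Lemma \ref{lemma : dir_sum_hom} (with $\delta = 1/3$) and Lemma \ref{cor : sim_hom_isomorphism} (with scaling constant $c = 1/3$) to obtain
\[
H_0 \Cech(S_{n+1}, \varepsilon) \cong H_0 \Cech(S_n, 3\varepsilon) \oplus H_0 \Cech(S_n, 3\varepsilon).
\]
After checking that the vertex maps induced by $\psi_0$ and $\psi_2$ intertwine the Čech inclusions as $\varepsilon$ varies, this pointwise isomorphism lifts to an isomorphism of persistence submodules over $[0, 1/3)$: each bar $(a, b)$ of $\dgm H_0\CechMod(S_n)$ contributes two scaled bars $(a/3, b/3)$ to $\dgm H_0\CechMod(S_{n+1})$, one from each cluster. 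In particular the cap $(0, 1)$ of $S_n$ contributes two cluster-level ``would-be persistent'' classes that are alive throughout $[0, 1/3)$.

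At $\varepsilon = 1/3$ the unique new cross-cluster edge $\{1/3, 2/3\}$ enters the complex. Applying the Mayer-Vietoris sequence \eqref{eq : Mayer-Vietoris} to $K_1 = \Cech(\psi_0(S_n), 1/3)$ and $K_2 = \Cech(\psi_2(S_n), 1/3)$ merges the two cluster-level persistent classes into a single class in $H_0\Cech(S_{n+1}, 1/3)$: one dies, contributing the bar $(0, 1/3)$, and the other survives every larger $\varepsilon$ and is truncated at the diameter, contributing the cap $(0, 1)$. Since $\Cech(S_{n+1}, \varepsilon)$ is connected for all $\varepsilon \geq 1/3$, no further $H_0$-events occur, so $\dgm H_0\CechMod(S_{n+1})$ consists of the cap $(0, 1)$, one bar $(0, 1/3)$, and two copies of $(a/3, b/3)$ for each non-cap bar $(a, b)$ of $\dgm H_0\CechMod(S_n)$.

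Finally, using the scaling identity $(1/3)\tau_{(a,b)}(3x) = \tau_{(a/3, b/3)}(x)$ together with the inductive observation that every non-cap bar of $\dgm H_0\CechMod(S_n)$ has diameter at most $1/3$, every scaled non-cap hat in the new diagram has peak strictly below $1/6$, while the new hat $\tau_{(0, 1/3)}$ has peak exactly $1/6$. The pointwise $\kmax$ sorting therefore places $\tau_{(0,1)}$ in position $1$, $\tau_{(0, 1/3)}$ in position $2$, and the paired scaled copies of $f^{(k)}_n$ for $k \geq 2$ into consecutive pairs of positions thereafter, which is precisely the right-hand side of Equation \eqref{eq:L-cantor3}. \textbf{Main obstacle.} I expect the most delicate step to be the Mayer-Vietoris calculation at $\varepsilon = 1/3$: one must identify exactly which of the two cluster-level persistent classes dies, verify that its bar is $(0, 1/3)$ rather than a shorter interval, and splice this single-scale computation onto the direct-sum picture for $\varepsilon < 1/3$ to produce a clean interval decomposition of $H_0\CechMod(S_{n+1})$ on all of $[0, \infty)$; a secondary subtlety is maintaining the diameter-cap convention consistently across the induction so that the persistent class of $S_n$ is tracked correctly as it passes through the threshold.
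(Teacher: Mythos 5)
Your strategy is essentially the paper's: for $\varepsilon<1/3$ both arguments combine Lemma \ref{lemma : dir_sum_hom} with Lemma \ref{cor : sim_hom_isomorphism} to show that each interval of $H_0\CechMod(S_n)$ yields two intervals of $H_0\CechMod(S_{n+1})$ scaled by $1/3$, then account for the single merge at $\varepsilon=1/3$ and the diameter cap, then sort via $\kmax$. The one real methodological difference is at the merge: you invoke Mayer--Vietoris to decide which cluster-level class dies, whereas the paper avoids this entirely with a cardinality count (the $2(2^n-1)$ doubled non-cap bars together with $(0,1/3)$ and $(0,1)$ already exhaust all $2^{n+1}=|S_{n+1}|$ generators of $H_0\Cech(S_{n+1},0)$), which is cleaner for $H_0$ since one never needs to identify \emph{which} of the two classes dies, only that exactly one does. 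Note also that your proposed $K_1\cup K_2$ omits the cross edge on $\{1/3,2/3\}$ and hence does not cover $\Cech(S_{n+1},1/3)$; the cover must be enlarged before Mayer--Vietoris applies.

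Two loose ends deserve attention. First, your sorting step rests on the claim that every non-cap bar of $S_n$ has length at most $1/3$; this fails for $n=1$, where the sole non-cap bar of $S_1=\{0,1\}$ has length $1$, so the case $n=1$ must be verified by direct computation (as the paper does). Second, the landscape you actually derive for $S_{n+1}$ places the pair $\tfrac13 f_n^{(k+1)}(3\cdot)$ in positions $2k+1,2k+2$; this agrees with the general operator of Equation \eqref{eq : well-sep_op_def} (with $N=2$, $c=1/3$), but \emph{not} with the index $g^{(k)}$ printed in Equation \eqref{eq:L-cantor3}. You assert the identification ``is precisely the right-hand side of Equation \eqref{eq:L-cantor3}'' without checking it; doing the bookkeeping for $n=2$ (where $\mathbf{f}_3$ has $\tau_{(0,1/3)}$ only in position $2$, while the printed formula would place it in positions $2,3,4$) exposes an off-by-one that must be confronted explicitly rather than passed over silently.
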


In other words, the commutative diagram in Equation \eqref{eq : com_dgm0} holds.

\begin{proof}
For $n\in\N$, we denote $\mathbf{f}_n = \{f_n^{(j)}\}_{j=1}^\infty$.  It easy to check for $n = 1$ using direct computation.  Since $S_1 = \{0,1\}$ and $S_2 = \{0,1/3,2/3,1\}$ we see that $f_2^{(1)} = \tau_{(0,1)}$, $f_2^{(j)} = \tau_{(0,3^{-1})}$ for $2\le j\le 4$, and $f_2^{(j)} = 0$ for all $j> 4$.  We can also see that 
\[\mathbf{f}_1 = \{\tau_{(0,1)}, 0,0,0,0,...\}.\]
By the definition of $\mathcal{L}$, we see that $\mathcal{L}\mathbf{f}_1 = \mathbf{f}_2$.   In general, for $n\in\N$, let 
\[L_n = \phi_0(S_n),\:\:R_n = \phi_2(S_n).\]
For all $n\in\N$, we see that
\[S_{n+1} = L_n\cup R_n.\]
Since $S_n\subset[0,1]$, we know that $L_n\cap R_n = \emptyset$.  Moreover,
\begin{equation}\label{eq : LRCdist}
d(L_n,R_n) = \min_{x\in L_n,y\in R_n} |x-y| = \frac{1}{3}.
\end{equation}
Let $\mathcal{D}_n= (D_n,\mu_n)$ denote the persistence diagram of $H_0\CechMod(S_n)$.  Note that since $S_n$ has $2^n$ distinct points, if $D_n = \{(0,d_j)\}_{j=1}^k$, then $\sum_{j=1}^k\mu_n(0,d_j) = 2^n$.  We assume without loss of generality that $d_j< d_{j+1}$ for $j\in\{1,...,k\}$.  This means $d_k = 1$, and for $j< k$, $d_j<1$.

\begin{figure}[b]
    \centering
    \includegraphics[scale = 0.65]{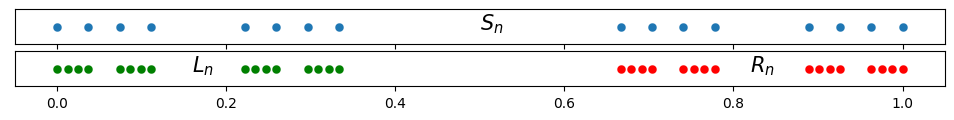}
    \caption{$S_n$ divided into the components $L_n$ and $R_n$ for $n = 3$.}
    \label{fig : cantorLnRn}
\end{figure}

We claim that for $j<k$, $(0,d_j)\in D_n$ implies that $(0,\frac{d_j}{3})\in D_{n+1}$ with $\mu_{n+1}(0,d_j/3) = 2\mu_n(0,d_j)$.  To help us do this, we first prove that for $0<\varepsilon<1$, the diagram
\begin{equation}\label{eq : com_dgm1}
\begin{tikzcd}
  H_0\Cech(S_n,0) \arrow[r, "M^{0,\varepsilon}"]\arrow[d,"\phi_{0*}^0"]  & H_0\Cech(S_n,\varepsilon)\arrow[d,"\phi_{0*}^\varepsilon"]\\
  H_0\Cech(L_n,0) \arrow[r,"P^{0,\varepsilon}"] & H_0\Cech(L_n,\tfrac{\varepsilon}{3})
\end{tikzcd}
\end{equation}
commutes, where $M^{0,\varepsilon}$ and $P^{0,\varepsilon}$ are homomorphisms induced by the inclusions $\Cech(S_n,0)\to\Cech(S_n,\varepsilon)$ and $\Cech(L_n,0)\to\Cech(L_n,\frac{\varepsilon}{3})$ respectively, and $\phi_{0*}^0$ and $\phi_{0*}^\varepsilon$ are isomporphisms induced by $\phi_0$ as in Corollary~\ref{cor : sim_hom_isomorphism}.  Indeed, for $\gamma\in H_0\Cech(S_n,0)$, we may use coset notation to write $\gamma = [x]B_0(\Cech(S_n,0))$ for some $x\in S_n$.  Thus
\[P^{0,\varepsilon}\phi_{0*}^0\gamma = P^{0,\varepsilon}[\tfrac{x}{3}]B_0\Cech(L_n,0) = [\tfrac{x}{3}]B_0\Cech(L_n,\tfrac{\varepsilon}{3}).\]
On the other hand,
\[\phi_{0*}^\varepsilon M^{0,\varepsilon}\gamma = \phi_{0*}[x]B_0\Cech(S_n,\varepsilon)=[\tfrac{x}{3}]B_0\Cech(L_n,\tfrac{\varepsilon}{3}).\]
Therefore $\phi_{0*}^\varepsilon M^{0,\varepsilon}= P^{0,\varepsilon}\phi_{0*}^0$, which implies  Diagram~\eqref{eq : com_dgm1} commutes.

To prove the first claim, suppose $(0,d_j)\in D_n$ with $j<k$, then there are $\mu_n(0,d_j)$ distinct classes $\gamma\in H_0\Cech(S_n,0)$ that die at $d_j$.  By assumption, there exists $\gamma\in H_0\Cech(S_n,0)$ such that $\gamma\in\ker M^{0,d_j}$, but $\gamma\notin\ker M^{0,t}$, for $t < d_j$.  Let $\gamma^- = \phi_{0*}^0\gamma$.  Since the diagram in \eqref{eq : com_dgm1} commutes, $\gamma^-\in\ker P^{0,\varepsilon}$ if and only if $\gamma\in\ker M^{0,\varepsilon}$.  Thus $\gamma^{-}\in \ker P^{0,d_j}$, but $\gamma^-\notin\ker P^{0,t}$ for $t<d_j$.  

%%% MC: I fixed the proof below and left what was previously there commented out.
For $s\le t$, let $Q^{s,t}:H_0\Cech(R_n,\frac{s}{3})\to H_0\Cech(R_n,\frac{t}{3})$ be the homomorphism induced by the natural inclusion. 
% $\Cech(R_n,\frac{s}{3})\to \Cech(R_n,\frac{t}{3})$.  
By replacing $L_n$ with $R_n$, and $\tfrac{x}{3}$ with $\frac{x+2}{3}$ in the argument above, we can see that there also exists $\gamma^+\in H_0\Cech(R_n,0)$ such that $\gamma^{+}\in \ker Q^{0,d_j}$, but $\gamma^+\notin\ker Q^{0,t}$ for $t<d_j$.  
% Since $S_{n+1} = L_n\cup R_n$ with $H_0\Cech(L_n,\varepsilon)$ not path connected to $H_0\Cech(R_n,\varepsilon)$ in $H_0\Cech(S_{n+1},\varepsilon)$ for $\varepsilon<\frac{1}{3}$, 
% Since $S_{n+1} = L_n\cup R_n$ with $\Cech(L_n,\varepsilon)$ not path-connected to
Since $\Cech(L_n,\varepsilon)$ is not path-connected to
$\Cech(R_n,\varepsilon)$ in $\Cech(S_{n+1},\varepsilon)$ for $\varepsilon<\frac{1}{3}$, 
it follows % from Lemma~\ref{lem : dir_sum_hom}, 
that 
\[H_0\Cech(S_{n+1},\varepsilon)\cong  H_0\Cech(L_n,\varepsilon)\oplus H_0\Cech(R_n,\varepsilon).\]
For $0\le s\le t<\frac{1}{3}$, the homomorphism $F^{s,t}:H_0\Cech(S_{n+1},\frac{s}{3})\to H_0\Cech(S_{n+1},\frac{t}{3})$ induced by the obvious inclusion % $\Cech(S_{n+1},\frac{s}{3})\to \Cech(S_{n+1},\frac{t}{3})$ 
satisfies
\[\ker F^{s,t}\cong \ker P^{s,t}\oplus\ker Q^{s,t}.\]
Thus for every class $\gamma\in H_0\Cech(S_n,0)$ with death time $d_j<1$, there exist 2 distinct classes in $H_0\Cech(S_{n+1})$ with death time $\frac{d_j}{3}$, which implies $(0,\frac{d_j}{3})\in D_{n+1}$ with $\mu_{n+1}(0,\frac{d_j}{3}) = 2\mu_n(0,d_j)$.  This is equivalent to what we claimed.

It follows from our claim that
\[\{(0,\tfrac{d_j}{3})\}_{j =1}^{k-1}\subset D_{n+1}\]
and
\[\sum_{j=1}^{k-1}\mu_{n+1}(0,\tfrac{d_j}{3}) = 2\sum_{j=1}^{k-1}\mu_n(0,d_j)= 2(2^{n} -1) = 2^{n+1}-2.\]
By~(\ref{eq : LRCdist}) we also see that $(0,1/3)\in D_{n+1}$.  
Since we are working with the zero-dimensional homology of a C\v{e}ch complex,
we also have $(0,1)\in D_{n+1}$, since the space is non-empty. 
% one connected component will persist infinitely.  
This accounts for all $2^{n+1}$ elements of $D_{n+1}$. For convenience, we reindex $D_n$ and $D_{n+1}$ repeating elements according to their multiplicity and ordering them by decreasing death times so that 
\[D_{n} = \{(0,1), (0,d_{(2)}), (0,d_{(3)}), ...,(0,d_{(2^n)})\},\]
and 
\[D_{n+1} = \{(0,1), (0,1/3), (0,d_{(2)}/3), (0,d_{(2)}/3), ...,(0,d_{(2^n)}/3),(0,d_{(2^n)}/3)\} .\]
Applying the definition in (\ref{eq : kmaxdef}), we see that $\mathbf{f}_n = \{f_n^{(j)}\}_{j=1}^\infty$, is defined by
\[f_n^{(j)} = \tau_{(0,d_{(j)})}, \text{ for } j\in\{1,2,...,2^{n}\}, \:\text{and}\:f_n^{(j)}=0\: \text{for}\: j> 2^{n}.\]
Since $d_{(1)} = 1$ and $d_{(2)} = 1/3$, we easily check that $\mathbf{f}_{n+1} = \{f_{n+1}^{(j)}\}_{j=1}^\infty$ satisfies 
\[\begin{split}
f_{n+1}^{(1)} &= \tau_{(0,1)},\\
f_{n+1}^{(2)}(t) &= \tau_{(0,1/3)}(t) = \frac{1}{3}f_{n}^{(1)}(3t)\\
f_{n+1}^{(2j+1)}(t) = f_{n+1}^{(2j+2)}(t) &= \tau_{(0,d_{(j)}/3)}(t) = \frac{1}{3}f_j^{(n)}(3t)\:\text{for } j\in\N .\\ 
\end{split}\]
Therefore $\mathcal{L}\mathbf{f}_n = \mathbf{f}_{n+1}$.
\end{proof}
Since $\mathcal{L}$ is Lipschitz with constant $\|\mathcal{T}\| = \frac{1}{3}<1$, we see that $\mathcal{L}$ has a unique fixed point, and that unique fixed point is $\mathbf{f} = \lim_{n\to\infty}\mathbf{f}_n$, which is the persistence landscape function of $\mathcal{C}$.  The explicit formula for $\mathbf{f}$ can be found using $\mathcal{L}$ by repeatedly applying $\mathcal{L}$ to any vector in $L^\infty(\N\times\R)$.  We find that the persistence landscape of $\mathcal{C}$ is $\mathbf{f} = \{f^{(j)}\}_{j=0}^\infty$, where
\begin{equation}\label{eq : Cantor_PL}
f^{(j)} = 
\begin{cases}
\tau_{(0,1)} &\text{if}\:\:j = 1\\
\tau_{(0,3^{-k})} &\text{if}\: \:2^{k-1}< j\le 2^{k},\:\:k\in\N\, .
\end{cases}
\end{equation}
We illustrate this landscape in Figure~\ref{fig : cantor-3} (produced by pyscapes \cite{angeloro_pyscapes_2020}).

We see from the illustration that the persistence landscape exhibits its own version of self-similarity.  This is a reflection of the fact that the fractal contains several scaled copies of itself.  Indeed, since scaling a subset of Euclidean space results in a proportional scaling of its persistence landscape, we should expect the persistence landscape of a fractal to contain a subsequence which is a scaled copy of itself.  The number of scaled copies, which corresponds to the number of generators of the IFS, is also reflected as a multiplicity in the persistence landscape.

%We caution, however, that the self-similarity we see in this persistence landscape is more subtle than mere containment.  As we shall see in the proofs of Theorems \ref{thm  : Op_WSI_IFS} and \ref{thm : IFS_Op_Invariance}, the self-similarity in the persistence landscape for $\mathcal{C}$ is also due to the commutativity of the diagram in Equation \ref{eq : com_dgm0}.

\begin{figure}
    \includegraphics[width=12cm]{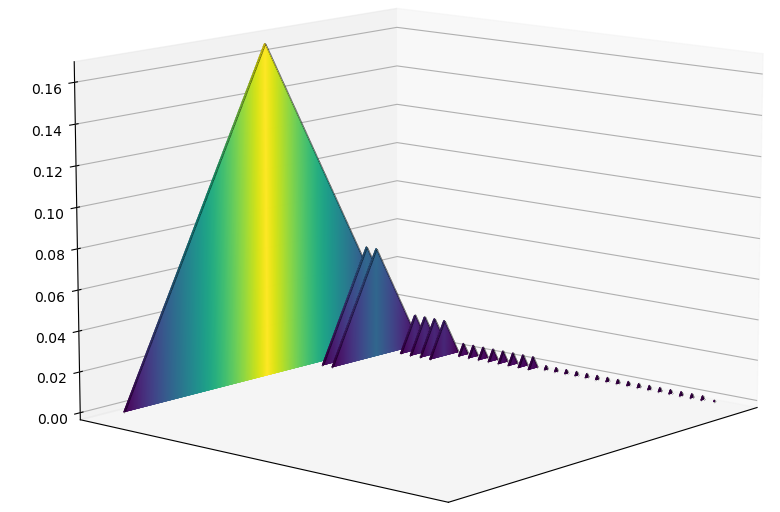}
    \caption{Graph of the functions $f_{2}, \dots, f_{33}$ from the persistence landscape of the middle third Cantor set $\mathcal{C}$. }
    \label{fig : cantor-3}
\end{figure}

\subsection{Affine Fractals with Well-Separated Images and Extreme Points}
The proof of Theorem~\ref{thm :  cantorset_op} suggests that a more general result exists for an IFS satisfying certain properties.  The two main ingredients that enable our calculations in the proof include 1) a judicious choice for the initial approximation $S_{0}$ and 2) a compatibility condition of the images of the maps in $\Psi$.  We refer to this condition as \emph{well-separated images}, and formalize this condition in Definition \ref{D:wsi}.  We first consider the choice of $S_{0}$.  Before proceeding, we will need to introduce some notation and definitions.  Unless stated otherwise, we assume $\Psi = \{\psi_{j}\}_{j=1}^N$ is an IFS consisting only of similitudes on $\R^d$ with the form
\begin{equation}\label{eq : simple_sim}
    \psi_j(\vec{x}) = c(\vec{x}+\vec{b}_j),\hspace{1cm} \vec{b}_j\in\R^d,\:c\in(0,1).
\end{equation}

We let $A$ denote the invariant set of $\Psi$.  For a set $B \subset \R^d$, we let Conv($B$) denote the convex hull of $B$.  We let $E_{B}$ denote the set of extreme points of Conv($B$).  As we shall see, for the affine fractal $A$, choosing $S_{0}=E_{A}$ is a good choice of initialization of our algorithm.  This corresponds to the choice we made for the middle third Cantor set $\mathcal{C}$; see also the example in Section \ref{ssec : mod1/5} for further evidence of this assertion.

Since the maps in $\Psi$ are contractions, each map has a unique fixed element.  Indeed, it is easy to calculate that for $\psi_{j}$, the fixed point is $\vec{x}_{j} = \left(\frac{c}{1-c}\right) \vec{b}_{j}$.  We let $F_{A}$ denote the set of these fixed points.  Theorem \ref{thm : Hutch1981} guarantees that $F_{A} \subset A$.  The following result tells us that we can easily find $E_{A}$.

\begin{lemma}\label{lemma : extreme_fixed}
Suppose $A\subset \R^d$ is the invariant set for some IFS $\Psi = \{\psi_j\}_{j=1}^N$ consisting of similitudes of the form in Equation~\eqref{eq : simple_sim}.  Then $E_A \subseteq F_A$.  
\end{lemma}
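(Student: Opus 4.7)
The plan is to prove the stronger statement $\conv(A) = \conv(F_A)$, from which the lemma follows immediately: $F_A$ contains at most $N$ points, so $\conv(F_A)$ is a polytope, and the extreme points of any polytope lie within its defining finite point set.

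To compare the two convex hulls I will match their support functions. For an arbitrary linear functional $\ell\in(\R^d)^*$, the self-similarity $A=\bigcup_{j=1}^N \psi_j(A)$ combined with the particular form $\psi_j(\vec{x})=c(\vec{x}+\vec{b}_j)$ gives the self-referential identity
\[
\sup_{\vec{a}\in A}\ell(\vec{a}) \;=\; \max_{1\le j\le N}\sup_{\vec{a}\in A}\bigl(c\,\ell(\vec{a}) + c\,\ell(\vec{b}_j)\bigr) \;=\; c\sup_A \ell \;+\; c\max_{j}\ell(\vec{b}_j).
\]
Compactness of $A$ (Theorem~\ref{thm : Hutch1981}) makes $\sup_A\ell$ finite, so solving yields $\sup_A\ell = \tfrac{c}{1-c}\max_j\ell(\vec{b}_j) = \max_j\ell(\vec{x}_j) = \sup_{F_A}\ell$, where I used $\vec{x}_j = \tfrac{c}{1-c}\vec{b}_j$. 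Since $\conv(A)$ and $\conv(F_A)$ are compact convex subsets of $\R^d$ with identical support functions, they coincide, and the lemma is proved.

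The key leverage is that the shared contraction factor $c$ factors uniformly out of every map in $\Psi$, which is precisely what makes the fixed-point equation for $\sup_A\ell$ solvable in closed form; the argument would collapse if the $\psi_j$ had distinct scaling constants, which reinforces why the common-scalar hypothesis of Equation~\eqref{eq : simple_sim} is natural. I do not anticipate any real obstacle in the write-up: beyond Theorem~\ref{thm : Hutch1981}, the only outside fact required is the standard result that a compact convex subset of $\R^d$ is determined by its support function.
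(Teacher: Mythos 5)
Your proof is correct, but it reaches the key intermediate fact $\conv(A)=\conv(F_A)$ by a genuinely different route than the paper. The paper shows that $\conv(F_A)$ is forward-invariant under each $\psi_k$ (writing $\psi_k(\vec{x}_j)=c\vec{x}_j+(1-c)\vec{x}_k$ as a convex combination of fixed points), then invokes Hutchinson's convergence $\Psi^p(\conv(F_A))\to A$ in the Hausdorff metric together with closedness of $\conv(F_A)$ to get $A\subseteq\conv(F_A)$, and finishes with Krein--Milman. You instead compute the support function of $A$ directly: the self-similarity $A=\bigcup_j\psi_j(A)$ turns $\sup_A\ell$ into a solvable scalar fixed-point equation, yielding $\sup_A\ell=\max_j\ell(\vec{x}_j)$ for every functional $\ell$, and then two compact convex sets with equal support functions coincide. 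Your argument is shorter and avoids the Hausdorff-limit step entirely; the paper's argument is more elementary (no convex duality) and makes the dynamical invariance of $\conv(F_A)$ explicit, which is conceptually aligned with the rest of the paper. One side remark of yours is slightly off: the claim that your argument ``would collapse'' with distinct scaling constants $c_j$ is not quite right. The identity $\sup_A\ell=\max_j\bigl(c_j\sup_A\ell+c_j\ell(\vec{b}_j)\bigr)$ still forces $\sup_A\ell=\tfrac{c_{j^*}}{1-c_{j^*}}\ell(\vec{b}_{j^*})=\ell(\vec{x}_{j^*})$ for a maximizing index $j^*$, so $\sup_A\ell\le\max_j\ell(\vec{x}_j)$ and the conclusion survives; what genuinely breaks the support-function computation is a nontrivial orthogonal part in the similitudes, not distinct contraction ratios. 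This does not affect the validity of your proof under the stated hypotheses.
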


\begin{proof}
Let $K = \conv(A)$.  Clearly, $F_A\subset K$.  Assume $F_A = \{\vec{x}_{j}\}_{j=1}^N$with $\vec{x}_{j} = \psi_j(\vec{x}_{j})$. We first observe that for $j\in \{1,...,N\}$, since $\vec{x}_{j} = c(\vec{x}_{j} + \vec{b}_{j})$, we know that $c\vec{b}_{j} = (1-c)\vec{x}_{j}.$  This implies that for $j\not=k$, 
\[\psi_k(\vec{x}_{j}) = c\vec{x}_{j} + c\vec{b}_{k} = c\vec{x}_{j}+(1-c) \vec{x}_{k}\in\conv(F_A).\] 
If $\vec{y}\in\conv(F_A)$, then for some $t_1,...,t_N\ge 0, \sum_{j=1}^{N} t_j = 1$ we have
\[y = \sum_{j=1}^{N} t_j \vec{x}_{j},\]
and for any $\psi_k\in\Psi$,
\[\begin{split}
    \psi_k( \vec{y}) &= c\left(\sum_{j=1}^N t_j\vec{x}_{j} + \vec{b}_{k}\right)= \sum_{j=1}^N ct_j\vec{x}_{j} + c\vec{b}_{k}= \sum_{j=1}^N ct_j\vec{x}_{j} + \sum_{j=1}^Nt_j c\vec{b}_{k}\\
    &= \sum_{j=1}^N t_j c(\vec{x}_{j} + \vec{b}_{k}) = \sum_{j=1}^N t_j\psi_k(\vec{x}_{j}).
    \end{split}\]
Since $\psi_k(\vec{x}_{j})\in \conv(F_A)$ for all $j\in \{1,...,N\}$, this implies that $\psi_k(\conv(F_A)) \subseteq \conv(F_A)$.  Since $k$ was arbitrary, this implies that the union of these images, $\Psi(\conv(F_A))\subseteq\conv(F_A)$.  From Theorem~\ref{thm : Hutch1981}, we have 
\begin{equation}\label{eq : haus_conv}
A = \lim_{p\to\infty}\Psi^p(\conv(F_A))
\end{equation}
 in the Hausdorff metric.  We claim that $A\subseteq\conv(F_A)$.  Indeed, choose $\vec{x}\in A$.  For all $n\in \N$, it follows from Equation \eqref{eq : haus_conv} that there exists $p(n)$ such that for some $\vec{y}_n\in \Psi^{p(n)}(\conv(F_A))$,
 \[|\vec{y}_{n} - \vec{x} |\le \frac{1}{n}\]
Since $\Psi^{p(n)}(\conv(F_A)) \subset \conv(F_A)$ for all $n\in \N$, we have $\{\vec{y}_n\}_{n=1}^\infty\subset\conv(F_A)$.  Since $\conv(F_A)$ is a closed set and $\lim_{n\to\infty}\vec{y}_n = \vec{x}$, this implies $\vec{x}\in\conv(F_A)$, which proves the claim.   Thus we have the sequence of containment:
\[F_A\subseteq A\subseteq\conv(A)\subseteq\conv(F_A).\]
By definition, this implies $\conv(F_A) = \conv(A)$.  By the Krein-Milman Theorem, it follows that
\[\conv(F_A) = \conv(A) = \conv(E_A).\]
For any $\vec{y}\in E_A$, $\vec{y}\in\conv(F_A)$, it follows that $\vec{y} =\sum_{j=1}^N t_j\vec{x}_{j}$, where $\sum_{j=1}^N t_j = 1$.  Since $\vec{y}$ is an extreme point, for some $k\in\{1,...,N\}$, $\vec{y} = \vec{x}_k\in F_A$.  Therefore $E_A \subseteq F_A$.
\end{proof}

%from Lemma~\ref{lemma : conv_def}

One key property of $\mathcal{C}$ that was used in the proof of Theorem~\ref{thm : cantorset_op} was that at each scale, the set could be partitioned into a left and right set.  The two halves were a significant distance away from each other, and each half was a scaled down version of the previous scale.  This property can be described in terms of the IFS, and because of its usefulness, we will define it formally.

\begin{definition} \label{D:wsi}
Let $\Psi = \{\psi_j\}_{j=1}^N$ be an IFS with invariant set $A$.  We say that $\Psi$ has \textbf{well-separated images} (or satisfies the well-separated condition) if 
\begin{equation} \label{eq:well-separated}
\min_{1\le j\not=k  \le N} d(\psi_j(A), \psi_k(A))\ge \max_{1\le j\le N} \diam\psi_j(A).
\end{equation}
\end{definition}
This definition may apply to any IFS, not only those of the form given in Equation~\eqref{eq : simple_sim}.  Note that on the left hand side of the inequality, we have the usual Euclidean distance, not the Hausdorff distance.  

%Note that by Lemma \ref{lemma : extreme_fixed}, the well-separated condition can be checked by replacing $A$ with either $E_{A}$ or $F_{A}$ in the inequality in \eqref{eq:well-separated}.

%As we did with $\mathcal{C}$ above, we need an efficient way to approximate $A$ as the limit of a sequence of sets.  We do this by starting with the extreme points of the convex hull of $A$.

%For invariant set $A$, we begin our sequence of sets by taking the collection of extreme points of the convex hull of $A$, which we denote as $E_A$.  If $\Psi$ is a family of $N$ similitudes with scaling constant less than 1, then each $\psi\in\Psi$ has a unique fixed point.  Let $F_A$ be the collection of $N$ fixed points under $\Psi$.  To be precise, we say that $x\in F_A$ if there exists $\psi\in\Psi$ such that $\psi(x) = x$.  Theorem \ref{thm : Hutch1981} guarantees that $F_A \subset A$.  If the only information we are given is $\Psi$, then finding $F_A$ is not difficult.  It is not obvious that finding $E_A$ is manageable.  It turns out that when dealing with an IFS such as the one described in Equation~\eqref{eq : simple_sim}, the extreme points must be fixed points. 

\subsection{Main Results}

Using the well-separated condition and the ideas in Subsection \ref{ssec:cantor3}, we are now ready to elucidate the relationship between an IFS and the persistence landscape of its invariant set in more generality.  Our main focus is on IFS with well-separated images having the form in Equation~\eqref{eq : simple_sim}, but many of the results below do not require these assumptions.  We will use the same two step approach that we used with $\mathcal{C}$.  We first identify a contraction on $L^\infty(\N\times\R)$ that has a fixed point equal to the persistence landscape of interest, then use the operator to find a formula for the persistence landscape.

Theorem~\ref{thm : Hutch1981} implies that if $\Psi$ is any IFS consisting of contractions then iteratively applying $\Psi$ to any compact set $K\subset\R^d$ creates a sequence of compact sets,  $\{S_n\}_{n=1}^\infty$, that converges to the invariant set $A$ in the Hausdorff metric.  Therefore, as a consequence of Theorem \ref{Th:stability}:

%We can define $f_A,f_n:\R^d\to \R^d$ by
%\[f_A(x) := 2d(x,A),\:\: f_n(x):= 2d(x,S_n),\]
%and it follows that $\{f_n\}_{n=1}^\infty$ converges uniformly to $f_A$.  We can then apply Theorem \ref{thm : PLstab1} again to conclude $\lim_{n\to\infty}\Lambda_\infty(\mathbb{M}(f_n), \mathbb{M}(f_A)) = 0$.  We formally state this result for the record.

\begin{theorem}\label{thm : invariant_set_PLconv}
Let $\Psi$ be an IFS on $\R^d$ consisting of contractions with invariant set $A$.  Let $K\subset\R^d$ be a compact set, and define the sequence of compact sets $\{S_n\}_{n=1}^\infty$ by 
\[S_1 = K,\:\: S_n = \Psi(S_{n-1})\:\:\text{for}\:\: n>1.\]
Then for any $p \geq 0$
\[\lim_{n\to\infty}\Lambda_\infty(H_p\CechMod(S_n), H_p\CechMod(A)) = 0.\]
\end{theorem}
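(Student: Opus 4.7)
The plan is to assemble the conclusion directly from Hutchinson's theorem (Theorem~\ref{thm : Hutch1981}) and the stability result (Theorem~\ref{Th:stability}), since together they furnish both the convergence in Hausdorff distance and the bound translating that convergence into persistence landscape distance.

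First I would verify that each $S_n$ and the invariant set $A$ both carry well-defined persistence landscapes. Since $K$ is compact and every $\psi_j \in \Psi$ is a contraction (in particular continuous), a straightforward induction shows that $S_n$ is a finite union of continuous images of a compact set, hence compact. The invariant set $A$ is also compact by Theorem~\ref{thm : Hutch1981}. Compactness of these subsets of $\R^d$ ensures, by the result of Chazal et al. cited just after the definition of q-tameness, that $H_p\CechMod(S_n)$ and $H_p\CechMod(A)$ are q-tame for every $p\ge 0$, so their persistence landscapes and the quantity $\Lambda_\infty$ are well-defined.

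Next, Hutchinson's theorem applies: $\Psi$ is a finite collection of contractions on the complete metric space $\R^d$, $K$ is closed and bounded, and by construction $S_n = \Psi^{n-1}(K)$. Therefore
\[
\lim_{n\to\infty} d_H(S_n, A) = 0.
\]
At this stage the stability theorem finishes the proof: applying Theorem~\ref{Th:stability} with $Y = S_n$ and $Z = A$ yields, for every $p \ge 0$ and every $n \in \N$,
\[
\Lambda_\infty\bigl(H_p\CechMod(S_n),\, H_p\CechMod(A)\bigr) \le d_H(S_n, A),
\]
and letting $n \to \infty$ gives the claimed convergence to zero.

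There is no genuine obstacle here beyond bookkeeping; the theorem is really a packaging of prior results. The only point that deserves care is confirming that the hypotheses of the two cited theorems are actually met by the sets $S_n$ and $A$ (compactness and q-tameness), so that both sides of the stability inequality make sense from the outset. Once that is noted, the argument is a two-line chain: Hausdorff convergence by Hutchinson, then stability by the landscape bound.
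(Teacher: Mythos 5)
Your argument is correct and is precisely the one the paper intends: the theorem is stated as an immediate consequence of Hutchinson's theorem (Hausdorff convergence of $S_n$ to $A$) combined with the stability bound of Theorem~\ref{Th:stability}. Your additional verification that the $S_n$ and $A$ are compact, hence q-tame with well-defined landscapes, is a reasonable bit of bookkeeping the paper leaves implicit.
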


We remark that the statement of Theorem~\ref{thm : invariant_set_PLconv} only mentions the C\v ech filtration, which applies to any dimension of homology.  Also note that the hypothesis makes no assumptions on $\Psi$ except that it consists of contractions.

Having established that there is a sequence of persistence landscapes that converge to the persistence landscape of the invariant set, we now seek a contraction on $L^\infty(\N\times\R)$ whose fixed point is the landscape of interest.  Just like we did above with $\mathcal{C}$ above, we will approximate $A$ by a finite set $F$ and compare the persistence landscapes of $F$ and $\Psi(F)$ to determine the operator.  

\begin{proposition}\label{prop : FiniteHomApprox}
Let $\Psi =\{\psi_j\}_{j=1}^N$ be an IFS consisting of similitudes all with scaling constants $c_j\in (0,1)$.  Let $A\subset\R^d$ be the invariant set of $\Psi$. For any $\varepsilon >0$ and any $p \geq 0$, there exists a finite set $F\subseteq A$ such that the following hold
\begin{enumerate} %[label=(\alph*)]
    \item[(a)] $d_H(A,F) \le\varepsilon$
    \item[(b)] $d_H(A,\Psi(F))\le\varepsilon$
    \item[(c)] $\Lambda_\infty(H_p\CechMod(A),H_p\CechMod(F)) \le\varepsilon$
    \item[(d)] $\Lambda_\infty(H_p\CechMod(A),H_p\CechMod(\Psi(F)) \le\varepsilon$.
\end{enumerate}
\end{proposition}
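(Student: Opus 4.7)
The strategy is to build $F$ as a finite $\varepsilon$-net inside $A$ and then deduce (b), (c), and (d) from (a) by combining the contraction property of the similitudes with the persistence-landscape stability theorem (Theorem~\ref{Th:stability}).

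First, I would invoke Hutchinson's theorem (Theorem~\ref{thm : Hutch1981}) to note that $A$ is compact and hence totally bounded, so that for any $\delta>0$ there exists a finite $F\subseteq A$ with $d_H(A,F)\le \delta$. Choosing $\delta=\varepsilon$ establishes (a) immediately. As noted in the proposition preceding the present one, the invariant set $A$ and any finite subset of $\R^d$ both admit well-defined persistence diagrams, so the quantities in (c) and (d) make sense.

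For (b), I would use that $F\subseteq A$ together with the invariance $\Psi(A)=A$ to conclude $\Psi(F)\subseteq A$. Combining the elementary union inequality $d_H\bigl(\bigcup_j X_j,\bigcup_j Y_j\bigr)\le \max_j d_H(X_j,Y_j)$ with the fact that each $\psi_j$ is $c_j$-Lipschitz yields
\[
d_H(A,\Psi(F))=d_H(\Psi(A),\Psi(F))\le \max_j d_H(\psi_j(A),\psi_j(F))\le \Bigl(\max_j c_j\Bigr)d_H(A,F)\le \varepsilon,
\]
where the last inequality uses $c_j<1$. Then (c) and (d) follow by applying Theorem~\ref{Th:stability} to the pairs $(A,F)$ and $(A,\Psi(F))$, using the bounds from (a) and (b) respectively.

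There is no substantive obstacle: the argument is essentially bookkeeping built on compactness, the union inequality for $d_H$, and the stability theorem. The only mild point is arranging a single choice of $\delta$ that forces all four conclusions simultaneously, which succeeds because $\max_j c_j<1$ so $\delta=\varepsilon$ suffices. One could alternatively take $F=\Psi^n(\{x_0\})$ for any $x_0\in A$ and $n$ large, invoking Hutchinson convergence in place of an explicit $\varepsilon$-net, but the net approach is more direct and keeps the argument self-contained.
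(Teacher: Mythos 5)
Your proposal is correct and follows essentially the same route as the paper: a finite $\varepsilon$-net in the compact set $A$ gives (a), the contraction property of the similitudes gives (b), and Theorem~\ref{Th:stability} converts (a) and (b) into (c) and (d). The only cosmetic difference is that you package step (b) via the union inequality for the Hausdorff distance, whereas the paper unrolls the same estimate pointwise by writing $z=\psi_k(w)$ and bounding $|\psi_k(x_j)-z|\le c|x_j-w|$; both are valid, and your version handles the case of distinct scaling constants $c_j$ slightly more cleanly.
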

\begin{proof}
Choose $\varepsilon>0$.  Since $A$ is compact, there exists $F := \{x_j\}_{j=1}^n\subseteq A$ such that $A\subseteq \bigcup_{j=1}^n B(x_j,\varepsilon)$.  Since $A\subseteq F_\varepsilon$ and $F\subset A_\varepsilon$, \emph{(a)} is satisfied.  Since $F\subseteq A$, we have $\Psi(F)\subseteq A\subset A_\varepsilon$.  To show the other containment, choose $z\in A$.  Since $\Psi(A) = A$, there exists $k\in\{1,...,N\}$, $w\in A$ such that $\psi_k(w) = z$.  Thus for some $x_j\in F$, $|x_j-w| <\varepsilon$, which implies
\[|\psi_k(x_j)-z| = |\psi_k(x_j) - \psi_k(w)| = c|x_j-w|<c\varepsilon<\varepsilon.\]
Thus $A\subseteq (\Psi(F))_\varepsilon$.  Thus \emph{(b)} is satisfied. Applying Theorem \ref{Th:stability}, we know for any compact set $K\subset\R^d$, 
\begin{equation}\label{eq : landscape_hausdorf_bound}
    \Lambda_\infty(H_p\CechMod(A),H_p\CechMod(K))\le d_{H}(A,K)
\end{equation}
In light of Equation~\eqref{eq : landscape_hausdorf_bound}, \emph{(c)} follows from \emph{(a)}.  Similarly, \emph{(d)} follows from \emph{(b)}.
\end{proof}

\begin{definition}
%For a set $X \subset \R^d$ and $\varepsilon >0$, we use $C(X, \varepsilon)$ to denote the number of %connected components in $\cup_{x \in X} \overline{B(x, \varepsilon)}$.
For a disconnected set $X\subset\R^d$, we say that $X$ is $\varepsilon$-connected if it cannot be expressed as a union of two non-empty sets $Y,Z\subset X$ such that $d(Y,Z)>\varepsilon$.  We say that $Y\subseteq X$ is an $\varepsilon$-component of $X$ if $Y$ is $\varepsilon$-connected and 
\[d(Y,X\backslash Y)>\varepsilon.\]
We let $C(X,\varepsilon)$ denote the number of distinct $\varepsilon$-components of $X$.
\end{definition}

Note that $C(X, \varepsilon)$ equals the number of connected components of $\cup_{x\in X}\overline{B(X,\varepsilon)}$, which is precisely the rank of 
$H_0\CechMod(X, \varepsilon)$. % We will use this quantity when computing $H_0\CechMod(A)$ since it is equal to the rank of $H_0\CechMod(A,\varepsilon)$.  
Clearly this number is non-increasing with respect to $\varepsilon$.  

If we assume $\Psi$ is an IFS of similitudes with well-separated images and the scaling constant for each $\psi_j \in \Psi$ equals $c\in (0,1)$, then we can define a sequence of distances 
\[\delta_1\ge \delta_2\ge \dots\ge \delta_{N-1}\ge \delta_N\]
by letting $\delta_1 = \diam(A)$, and for $k\in\{2,...,N\}$,
\begin{equation}\label{eq : resolutions_def}
    \delta_k := \inf\{\varepsilon>0|C(A,\varepsilon)\le k-1\}.
\end{equation}
We will make use of the fact that when $\Psi$ has well-separated images,
\[\delta_N = \min_{1\le j\not= k\le N} d(\psi_j(A),\psi_k(A)).\]
This also means that
\begin{equation}\label{eq : delta_j_def}
    \delta_N \ge c\delta_1 = \diam\psi_j(A),\:\:j\in\{1,...,N\}.
\end{equation}

%Before we can prove this result, we start with two simple observations.  The first will allow us to approximate $H_0\CechMod(A)$ using finite vertex sets.

%Since we are approximating $A$ by a finite set, $F$, the following proposition will be useful.

\begin{proposition}\label{prop : eps_comp_estimates}
Suppose $A\subset\R^d$ is compact, and $F\subseteq A$ is finite with $d_H(F,A) \le \alpha$.  Then for all $\varepsilon>2\alpha$, we have
\[C(A,\varepsilon)\le C(F,\varepsilon)\le C(A,\varepsilon -2\alpha).\]
\end{proposition}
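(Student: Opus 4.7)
The plan is to work with the $\varepsilon$-chain reformulation of $C(X,\varepsilon)$: $x \sim_{X,\varepsilon} y$ iff there exist $z_0 = x, z_1, \ldots, z_n = y$ in $X$ with $d(z_{i-1}, z_i) \le \varepsilon$, and $C(X,\varepsilon)$ counts the equivalence classes. This is the chain description underlying the identification of $C(X,\varepsilon)$ with $\rank H_0 \CechMod(X,\varepsilon)$ given just before the proposition. Both inequalities will then be produced by constructing surjective maps between the sets of equivalence classes, with the triangle inequality doing all of the real work.

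For the left inequality $C(A,\varepsilon) \le C(F,\varepsilon)$, I send a class $[f]_F$ to the class $[f]_A$. Well-definedness is immediate: any $\varepsilon$-chain in $F$ is an $\varepsilon$-chain in $A$. Surjectivity uses only $\alpha < \varepsilon$ (a consequence of $\varepsilon > 2\alpha$): given $a \in A$, choose $f \in F$ with $d(a,f) \le \alpha < \varepsilon$, so the single step $a, f$ is an $\varepsilon$-chain showing $a \sim_{A,\varepsilon} f$. Hence every $A$-class meets $F$, so the map is onto.

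For the right inequality $C(F,\varepsilon) \le C(A,\varepsilon - 2\alpha)$, I reverse direction: send each $(\varepsilon-2\alpha)$-class $[a]_A$ of $A$ to the class $[f]_F$ for some $f \in F$ with $d(a,f) \le \alpha$. Here $\varepsilon > 2\alpha$ ensures $\varepsilon - 2\alpha > 0$, so the relation on $A$ is nontrivial. Well-definedness is the substantive step: if $a \sim_{A,\varepsilon - 2\alpha} a'$ via a chain $a = a_0, \ldots, a_n = a'$ in $A$, pick $f_i \in F$ with $d(a_i, f_i) \le \alpha$ (taking $f_0, f_n$ to be the chosen partners of $a, a'$). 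The triangle inequality
\[
d(f_{i-1}, f_i) \le d(f_{i-1}, a_{i-1}) + d(a_{i-1}, a_i) + d(a_i, f_i) \le \alpha + (\varepsilon - 2\alpha) + \alpha = \varepsilon
\]
gives an $\varepsilon$-chain in $F$ from $f_0$ to $f_n$, so $[f_0]_F = [f_n]_F$. Surjectivity is automatic: any $f \in F \subseteq A$ lies in some $A$-class, which is sent to $[f]_F$.

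The main conceptual point is installing the chain-based description at the start; after that the argument is two triangle-inequality computations. I do not anticipate any serious obstacle, only a small amount of bookkeeping to verify that each surjection reduces its counting problem to the other.
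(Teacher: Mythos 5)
Your proof is correct, and in substance it runs on the same two triangle--inequality estimates as the paper's: your bound $d(f_{i-1},f_i)\le \alpha+(\varepsilon-2\alpha)+\alpha=\varepsilon$ is exactly the paper's chain $\varepsilon\le|w-z|\le|w-x|+|x-y|+|y-w|$ read in the other direction. The difference is in the packaging. The paper argues directly with $\varepsilon$-components as subsets: it intersects an $\varepsilon$-component $S$ of $A$ with $F$ (respectively, thickens an $\varepsilon$-component $S_0$ of $F$ by $\alpha$ and intersects with $A$) and checks that the resulting set is separated from its complement by more than $\varepsilon$ (respectively, more than $\varepsilon-2\alpha$), so that it ``contains at least one distinct component'' at the relevant scale. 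You instead pass to the $\varepsilon$-chain equivalence relation and build explicit surjections between the sets of classes. This is a cleaner bookkeeping device: it replaces the paper's two ``contains at least one distinct $\varepsilon$-component'' assertions --- which are true but stated without proof, and are themselves most easily verified via the chain description --- by well-definedness and surjectivity checks that are immediate, and it handles the choice-independence in the second map (different admissible $f$ for the same $a$ lie within $2\alpha\le\varepsilon$ of each other) as the degenerate case of the same computation. The one thing you should add is a line justifying that the number of chain-equivalence classes at scale $\varepsilon$ agrees with $C(\cdot,\varepsilon)$ as defined in the paper, i.e.\ with the number of $\varepsilon$-components: for finite $F$ this is immediate, and for compact $A$ each chain class $Y$ is closed (a limit of points of $Y$ is within $\varepsilon$ of points of $Y$), hence compact, so $d(Y,A\setminus Y)$ is attained and must exceed $\varepsilon$, while the usual ``first step that crosses'' argument shows $Y$ is $\varepsilon$-connected. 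With that sentence in place the argument is complete.
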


\begin{proof}
Suppose first that $S\subseteq A$ is an $\varepsilon$-component of $A$.  This means $\overline{S}_{\varepsilon/2}$ is connected and $d(S,A\backslash S)>\varepsilon$.  Let $S_0 = F\cap S$.  Since $\alpha <\varepsilon$, we know that $S_0\not=\emptyset$.  We have $S_0\subseteq S$ and $F\backslash S_0\subseteq A\backslash S$.  Indeed, if $x\in F\backslash S_0$, then $x\in F$ and $x\notin F\cap S$, which implies $x\notin S$.  Therefore $x\in A\backslash S$.  With this claim, we have established that
\[d(S_0,F\backslash S_0)\ge d(S,A\backslash S) >\varepsilon.\]
Since $S_0$ is nonempty and compact, it contains at least one distinct $\varepsilon$-component of $F$.  Since $S$ was an arbitrary $\varepsilon$-component of $A$, this implies
\[C(F,\varepsilon)\ge C(A,\varepsilon).\]
For the other inequality, suppose now that $S_0\subseteq F$ is an $\varepsilon$-component of $F$.  Let $S= A\cap (S_0)_{\alpha}$.  Choose $x\in S$ and $y\in A\backslash S$.  By assumption, there exists $w\in S_0$ with $|x-w| <\alpha$.  Since $y\notin S$, we have for all $v\in S_0$, $|y-v|\ge\alpha$.  Since $A\subset F_\alpha$, there exists $z\in F$, with $|y-z|<\alpha$, which implies $z\notin S_0$.  Therefore $|z-w|\ge\varepsilon$.  Hence
\[\varepsilon\le |w-z|\le |w-x|+|x-y|+|y-w|.\]
Therefore
\[\varepsilon-2\alpha <\varepsilon -|w-x|-|y-w|\le |x-y|.\]
This implies that $d(S,A\backslash S)>\varepsilon-2\alpha$.  From this bound, we see that if we partition $F$ into its $n$ distinct $\varepsilon$-components $S_0,S_1,...,S_n$, then by letting $S'_j = A\cap(S_j)_\alpha$, we see that $A = \bigcup_{j=0}^n S'_j$.  We also have that each $S'_j$ contains at least one distinct $(\varepsilon-2\alpha)$-component of $A$.  Thus 
\[C(F,\varepsilon)\le C(A,\varepsilon-2\alpha). \qedhere \]
\end{proof}

Using the distances as defined in Equation \eqref{eq : resolutions_def}, we define $\mathcal{L}:L^\infty(\N\times\R)\to L^\infty(\N\times\R)$ by $\mathcal{L}\mathbf{g} = \mathbf{h}$, where 
\begin{equation}\label{eq : well-sep_op_def}
    \begin{split}
        h^{(1)} = \tau_{(0,\delta_1)},\:h^{(2)} = \tau_{(0,\delta_{2})} \, ,\ldots, \, 
        h^{({N-1})} = \tau_{(0,\delta_{N-1})}, h^{(N)} = \tau_{(0,\delta_N)}, \\
        h^{(kN+1)}= h^{(kN+2)} = \cdots = h^{(kN+N-1)}=h^{(kN+N)} =
        cg^{(k+1)}(c^{-1}x),\:\:\text{for}\:\:k\in\N \, .
    \end{split}
\end{equation}

\begin{theorem}\label{thm : Op_WSI_IFS}
Let $\Psi$ be a IFS on $\R^d$ consisting of $N$ similitudes each with scaling constant $c\in(0,1)$.  Let $A$ be the invariant set of $\Psi$ and let $\mathbf{f}\in L^\infty(\N\times\R)$ denote the persistence landscape of resulting from $H_0\CechMod(A)$.  If $\Psi$ has well-separated images, then $\mathcal{L}:L^\infty(\N\times\R)\to L^\infty(\N\times\R)$ as defined in Equation~\eqref{eq : well-sep_op_def}, satisfies $\mathcal{L}\mathbf{f} = \mathbf{f}$.
\end{theorem}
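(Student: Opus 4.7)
The plan is to prove $\mathcal{L}\mathbf{f}=\mathbf{f}$ by showing that for any sufficiently fine finite approximation $F\subseteq A$, the persistence landscape of $\Psi(F)$ almost equals $\mathcal{L}$ applied to the persistence landscape of $F$, and then passing to the limit $F\to A$. Note first that $\mathcal{L}=\mathbf{v}+\mathcal{T}$, where $\mathbf{v}$ has entries $\tau_{(0,\delta_j)}$ for $j\le N$ and $0$ thereafter, and $\mathcal{T}$ is linear with $\|\mathcal{T}\|=c<1$. So $\mathcal{L}$ is an affine contraction on $L^\infty(\N\times\R)$ with Lipschitz constant $c$. Given $\varepsilon>0$, invoke Proposition~\ref{prop : FiniteHomApprox} to obtain a finite $F\subseteq A$ with $d_H(F,A)\le\varepsilon$ and both $\Lambda_\infty(H_0\CechMod(F),H_0\CechMod(A))\le\varepsilon$ and $\Lambda_\infty(H_0\CechMod(\Psi(F)),H_0\CechMod(A))\le\varepsilon$. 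Writing $\mathbf{f}_F,\mathbf{g}_F$ for the landscapes of $F$ and $\Psi(F)$, the triangle inequality and Lipschitz property yield
\[
\|\mathcal{L}\mathbf{f}-\mathbf{f}\|_\infty\le c\|\mathbf{f}-\mathbf{f}_F\|_\infty+\|\mathcal{L}\mathbf{f}_F-\mathbf{g}_F\|_\infty+\|\mathbf{g}_F-\mathbf{f}\|_\infty\le(c+1)\varepsilon+\|\mathcal{L}\mathbf{f}_F-\mathbf{g}_F\|_\infty.
\]

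The core calculation is bounding $\|\mathcal{L}\mathbf{f}_F-\mathbf{g}_F\|_\infty$. Because $F\subseteq A$, the images satisfy $\psi_j(F)\subseteq\psi_j(A)$, so $\delta_N^{\Psi(F)}:=\min_{j\ne k}d(\psi_j(F),\psi_k(F))\ge\delta_N\ge c\,\diam(A)\ge c\,\diam(F)$. For $0\le\varepsilon<\delta_N^{\Psi(F)}$, Lemma~\ref{lemma : dir_sum_hom} gives $H_0\Cech(\Psi(F),\varepsilon)\cong\bigoplus_{j=1}^N H_0\Cech(F,c^{-1}\varepsilon)$, and Lemma~\ref{cor : sim_hom_isomorphism} (applied to each $\psi_j$) ensures these isomorphisms are compatible with the filtration maps. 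Thus, below the first inter-component merging scale, each bar $(0,d_i^F)$ of $F$ contributes exactly $N$ bars $(0,cd_i^F)$ to $\Psi(F)$. Above that scale, the $N$ surviving ``component'' classes merge via the standard 0-dimensional elder rule at the ordered inter-component thresholds $\delta_N^{\Psi(F)}\le\delta_{N-1}^{\Psi(F)}\le\cdots\le\delta_2^{\Psi(F)}<\delta_1^{\Psi(F)}=\diam\Psi(F)$. The key quantitative consequence of the well-separated hypothesis is $\delta_N^{\Psi(F)}\ge c\,\diam(F)\ge cd_i^F$ for every bar of $F$, so within-component bars and between-component bars do not interleave. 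Sorting the death times in decreasing order and using the identity $\tau_{(0,cd)}(x)=c\,\tau_{(0,d)}(c^{-1}x)$ yields
\[
\mathbf{g}_F^{(j)}=\tau_{(0,\delta_j^{\Psi(F)})}\ \text{for}\ 1\le j\le N,\qquad \mathbf{g}_F^{(kN+j)}=c\,\mathbf{f}_F^{(k+1)}(c^{-1}x)\ \text{for}\ k\ge1,\ 1\le j\le N.
\]
Comparing with Equation~\eqref{eq : well-sep_op_def}, the landscapes $\mathbf{g}_F$ and $\mathcal{L}\mathbf{f}_F$ agree term-by-term except in the first $N$ entries, where one uses $\delta_j^{\Psi(F)}$ and the other $\delta_j$. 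Since $\|\tau_{(0,a)}-\tau_{(0,b)}\|_\infty=|a-b|$, we conclude $\|\mathcal{L}\mathbf{f}_F-\mathbf{g}_F\|_\infty\le\max_{1\le j\le N}|\delta_j^{\Psi(F)}-\delta_j|$.

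Finally, as $d_H(F,A)\to 0$, $\Psi(F)\to\Psi(A)=A$ in Hausdorff distance, and Proposition~\ref{prop : eps_comp_estimates} forces $C(\Psi(F),\cdot)\to C(A,\cdot)$ at every continuity point. The well-separated condition implies that the $\delta_j$ are precisely the jump points of the step function $\varepsilon\mapsto C(A,\varepsilon)$ on $[0,\delta_1]$, so $\delta_j^{\Psi(F)}\to\delta_j$ for each $j=1,\dots,N$. Therefore the right-hand side of the triangle inequality above tends to $0$ as $\varepsilon\to 0$, and $\mathcal{L}\mathbf{f}=\mathbf{f}$. The main obstacle is the middle paragraph: setting up the bookkeeping to confirm that the persistence diagram of $\Psi(F)$ really has the predicted multiplicities and that the well-separated hypothesis quantitatively prevents within- and between-component bars from interleaving. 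The convergence $\delta_j^{\Psi(F)}\to\delta_j$ is a secondary technical point handled by Proposition~\ref{prop : eps_comp_estimates}.
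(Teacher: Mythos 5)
Your proposal is correct and follows essentially the same route as the paper: approximate $A$ by a finite $F$ via Proposition~\ref{prop : FiniteHomApprox}, split $\|\mathcal{L}\mathbf{f}-\mathbf{f}\|$ by the triangle inequality using that $\mathcal{L}$ is an affine $c$-contraction, compute the landscape of $\Psi(F)$ via the direct-sum decomposition of Lemma~\ref{lemma : dir_sum_hom} together with the scaling isomorphism of Lemma~\ref{cor : sim_hom_isomorphism}, and control the discrepancy in the first $N$ entries (your $\delta_j^{\Psi(F)}$ are the paper's $\eta_j$) via Proposition~\ref{prop : eps_comp_estimates}. The only cosmetic difference is that the paper makes the last step quantitative with the explicit two-sided bound $\delta_k\le\eta_k\le\delta_k+2/n$ rather than a continuity-point argument.
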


%We are now ready to prove the main result.

\begin{proof} %[Proof of Theorem~\ref{thm : Op_WSI_IFS}]
Let $\mathbf{f}\in L^\infty(\N\times\R)$ denote the persistence landscape resulting from $H_0\CechMod(A)$.    By Proposition~\ref{prop : FiniteHomApprox}, for all $n\in \N$, there exists a finite subset $F_n\subseteq A$ such that $d_H(A,\Psi(F_n))\le \frac{1}{n}$ with
\[\Lambda_\infty(H_0\CechMod(A),H_0\CechMod(F_n)) \le\frac{1}{n}\:\:\text{and}\:\:\Lambda_\infty(H\CechMod(A),H_0\CechMod(\Psi(F_n)) \le\frac{1}{n}.\]
Choose $\varepsilon >0$.  Let $\mathbf{g}_n\in L^\infty(\N\times\R)$ denote the persistence landscape from $H_0\CechMod(F_n)$ and $\mathbf{h}_n\in L^\infty(\N\times\R)$ denote the persistence landscape from $H_0\CechMod(\Psi(F_n))$.  Using the triangle inequality, for all $n\in\N$, we have
\begin{equation}\label{eq : Lf_minus_f}
    \|\mathcal{L}\mathbf{f}-\mathbf{f}\|_{L^\infty(\N\times\R)} \le \|\mathcal{L}\mathbf{f}-\mathcal{L}\mathbf{g}_n\|_{L^\infty(\N\times\R)}+\|\mathcal{L}\mathbf{g}_n-\mathbf{h}_n\|_{L^\infty(\N\times\R)}+\|\mathbf{h}_n-\mathbf{f}\|_{L^\infty(\N\times\R)}.
\end{equation}
Looking at the definition of $\mathcal{L}$ in Equation~\eqref{eq : well-sep_op_def}, we see that $\mathcal{L}$ is a contraction with Lipschitz constant $c$.  Thus 
\[\|\mathcal{L}\mathbf{f}-\mathcal{L}\mathbf{g}_n\|_{L^\infty(\N\times\R)}\le c\|\mathbf{f}-\mathbf{g}_n\|_{L^\infty(\N\times\R)}\le c\Lambda_\infty(H_0\CechMod(A),H_0\CechMod(F_n))\le \frac{c}{n}.\]
Similarly, 
\[\|\mathbf{h}_n-\mathbf{f}\|_{L^\infty(\N\times\R)} = \Lambda_\infty(H_0\CechMod(A),H_0\CechMod(\Psi(F_n)) \le\frac{1}{n}.\]
Now our goal is to bound $\|\mathcal{L}\mathbf{g}_n-\mathbf{h}_n\|_{L^\infty(\N\times\R)}$.  For $s\le t$, let $M_n^{s,t}$ denote the homomorphism induced by the inclusion mapping $\Cech(F_n,s)\to \Cech(F_n,t)$. For $s\le t$, let $P_n^{s,t}$ denote the homomorphism induced by the inclusion mapping $\Cech(\Psi(F_n),s)\to \Cech(\Psi(F_n),t)$.  We have $\mathbf{g}_n = \left\{g_{n}^{(j)}\right\}_{j=1}^\infty$, which implies $\mathbf{\hat{g}}_n = \mathcal{L}\mathbf{g}_n$, where 
\begin{equation}\label{eq : ghat}
    \begin{split}
        \hat{g}_n^{(1)} = \tau_{(0,\delta_1)},\:\hat{g}^{(2)}_n = \tau_{(0,\delta_{2})},...,\hat{g}^{(N-1)}_n = \tau_{(0,\delta_{N-1})}, \hat{g}^{(N)}_{n} &= \tau_{(0,\delta_N)}\\
        \hat{g}^{(kN+1)}_{n}= \hat{g}^{(kN+2)}_{n} = ...= \hat{g}^{(kN+N-1)}_{n}=\hat{g}^{(kN+N)}_{n} &= cg_n^{(k+1)}(c^{-1}x),\:\:\text{for}\:\:k\in\N
    \end{split}
\end{equation}
% To understand $\mathbf{h}_n$, consider the homomorphisms $P_n^{s,t}:H_0\Cech(\Psi(F_n),s)\to
% H_0\Cech(\Psi(F_n),t)$.  
Note that we use the convention $\Cech(X,r) = \emptyset$ for any $X\subseteq \R^d$ and $r<0$, which implies that $\rank i_*^{r,t} = 0 $ for $r<0$, where $i_*:H_0\Cech(X,r)\to H_0\Cech(X,t)$ is the homomorphism induced by inclusion.  This is why all landscape functions from C\v ech filtrations have non-negative support.  %This also tells us that $\rank P_n^{s,t} = 0$ if $s<0$, and by Proposition~\ref{prop : surj_inclusion}, this tells us that $\rank P_n^{s,t} = \dim H_0$

%By reindexing if necessary, we can assume without loss of generality that for $j \in\{2,...,N\}$, 
%\[\delta_k = d(\psi_j(A), A\backslash\psi_j(A)\]
Let $\eta_1 = \diam(F)$ and define $\eta_2,...,\eta_N$ by 
\[\eta_k := \inf\{\varepsilon>0|C(\Psi(F_n),\varepsilon)\le k-1\}\]
Since $d_H(A,\Psi(F_n))<\frac{1}{n}$, if we assume $n\ge \frac{4}{\delta_N}$, then we have
\[\frac{2}{n}\le \frac{\delta_N}{2}<\delta_N.\]
We know that for all $\varepsilon < \delta_N$, we have $C(A,\varepsilon) >k-1$.  It follows from Proposition~\ref{prop : eps_comp_estimates} that
\begin{equation}
    \eta_k  = \inf\{\varepsilon >0|C(\Psi(F_n),\varepsilon)\le k-1\}
    \ge \inf\{\varepsilon >0|C(A,\varepsilon)\le k-1\} = \delta_k.
\end{equation}
On the other hand, we have 
\begin{equation}
    \begin{split}
        \eta_k &= \inf\{\varepsilon >0|C(\Psi(F_n),\varepsilon)\le k-1\}\le \inf\{\varepsilon>0|C(A,\varepsilon-\tfrac{2}{n})\le k-1\}\\
        &= \inf\{\varepsilon +\tfrac{2}{n} >0|C(A,\varepsilon)\le k-1\}
        =\frac{2}{n} +\inf\{\varepsilon>0|C(A,\varepsilon)\le k-1\}\\
        &=\delta_k+\frac{2}{n}.
    \end{split}
\end{equation}
Thus we have  $|\eta_k-\delta_k| \le \frac{2}{n}$.  We also know that $c\diam\Psi(F_n)\le \eta_N$ since $\Psi$ has well separated images.
By Lemma~\ref{lemma : dir_sum_hom}, we have for $t<c\eta_N$, 
\[H_0\Cech(\Psi(F_n),t)\cong \bigoplus_{j=1}^N H_0\Cech(F_n,c^{-1}t).\]
From this isomorphism, we can see that for  $s\le t\le \eta_N$, the homomorphism $P_n^{s,t}:H_0\Cech(\Psi(F_n),s)\to H_0\Cech(\Psi(F_n),t)$ has the same rank as
\[M_+^{c^{-1}s,c^{-1}t}:\bigoplus_{j=1}^N H_0\Cech(F_n,c^{-1}s)\to \bigoplus_{j=1}^N H_0\Cech(F_n,c^{-1}t),\]
where $M_+^{s,t} =\bigoplus_{j=1}^NM^{s,t}$.  Thus for $s\le t\le \eta_1$,
\[\rank P^{s,t} = N\rank M^{c^{-1}s,c^{-1}t}.\]

By definition, we have for all $j\in\N,\:t\ge 0$,
\[h_n^{(j)}(t) = \sup\{m\ge 0 |\rank P_n^{t-m,t+m}\ge j\}.\]
If $t+m\ge \eta_N$, then $\Cech(\Psi(F_n),t+m)$ has at most, $N-1$ connected components, which implies $\rank P_n^{t-m,t+m}<N$, so for $j = kN + l$ for $k\in\N$, $l\in \{1,2,...,N\}$, we have
\begin{equation}\label{eq : hjs}
\begin{split}
    h_n^{(j)}(t) &= \sup\{m\ge 0 |\rank P_n^{t-m,t+m}\ge j\}\\
    &=\sup\{m\ge0|\rank M_n^{c^{-1}(t-m),c^{-1}(t+m)}\ge j/N\} \\
    &=\sup\{m\ge0|\rank M_n^{c^{-1}(t-m),c^{-1}(t+m)}\ge k+\tfrac{l}{N}\}\\
    &=\sup\{m\ge0|\rank M_n^{c^{-1}(t-m),c^{-1}(t+m)}\ge k+1\}
    \end{split}
\end{equation}

On the other hand, for any $k\in \N$, 
\begin{equation}\label{eq : gks}
    \begin{split}
        cg_n^{(k)}(c^{-1}t) &= c\sup\{m\ge 0|\rank M_n^{c^{-1}t-m,c^{-1}t+m}\ge k\}\\
        &=\sup\{cm\ge 0|\rank M_n^{c^{-1}t-m,c^{-1}t+m}\ge k\}\\
        &=\sup\{m\ge 0|\rank M_n^{c^{-1}(t-m),c^{-1}(t+m)}\ge k\}\\
    \end{split}
\end{equation}
Combining Equation~\eqref{eq : hjs} and Equation~\eqref{eq : gks}, we see that for $j =kN+l$, $k\in\N$, $l\in\{1,2,...,N\}$, we have
\[h_n^{(j)}(t) = cg_{n}^{(k+1)}(c^{-1}t).\]
If $j\in \{2,...,N\}$, we know by choice of $\eta_1,...,\eta_N$ that $\rank P^{t-m,t+m}\ge j-1$ if and only if $t-m\ge 0$ and $t+m<\eta_j$.  This is true if and only if $m\le t$ and $m<\eta_j-t$.  Thus, by definition of the landscape functions, for $j\in \{2,...,N\}$,
\[h_n^{(j)}(t) = \max\{t,\eta_j-t\} = \tau_{(0,\eta_j)}.\]
By convention, $h_n^{(1)} = \tau_{(0,\eta_1)}$.  

Putting everything together, we have established that
\begin{equation}\label{eq : hdef}
    \begin{split}
        h_n^{(1)} = \tau_{(0,\eta_1)},\:h^{(2)}_n = \tau_{(0,\eta_2)},...,h^{(N-1)}_{n} = \tau_{(0,\eta_{N-1})}, h^{(N)}_{n} &= \tau_{(0,\eta_N)}\\
        h^{(kN+1)}_{n}= h^{(kN+2)}_n = ...= h^{(kN+N-1)}_n=h^{(kN+N)}_n &= cg^{(k+1)}_n(c^{-1}x),\:\:\text{for}\:\:k\in\N.
    \end{split}
\end{equation}
This means we can compute
\[\|\mathcal{L}\mathbf{g}_n -\mathbf{h}_n\|_{L^\infty(\N\times\R)} = \max_{1\le j\le N}\|\tau_{(0,\delta_j)} -\tau_{(0,\eta_j)}\|_\infty=\max_{1\le j\le N}|\delta_j - \eta_j| \le \frac{2}{n} .  \]
Putting our three bounds together with Equation~\eqref{eq : Lf_minus_f}, we have
\[\|\mathcal{L}\mathbf{f} - \mathbf{f}\|_{L^\infty(\N\times\R)} \le \frac{c}{n}+\frac{2}{n}+\frac{1}{n}.\]
Taking the limit as $n\to\infty$, we conclude that $\mathcal{L}\mathbf{f} = \mathbf{f}$.

\end{proof}

Now that we have identified a contraction on $L^\infty(\N\times\R)$ whose fixed point is $\mathbf{f}$, the landscape of $A$, we can compute $\mathbf{f}$ itself by finding $\lim_{n\to\infty}L^n\mathbf{0}$, were $\mathbf{0}$ denotes the zero sequence in $L^\infty(\N\times\R)$.  When we do this, we find the formula for $\mathbf{f}= \{f^{(j)}\}_{n=1}^\infty$ is given by 
\begin{equation}\label{eq : PL_wellseperated}
    f^{(j)} = \begin{cases}
    \tau_{(0,\delta_j)} &\text{if}\:\: 1\le n\le N\\
    \tau_{(0,c^k\delta_l)} &\text{if}\:\: (l-1)N^{k}<j\le l N^{k},\:\:k,l\in\N,\:2\le l\le N\\
    \end{cases}.
\end{equation}
Looking back at Equation~\eqref{eq : Cantor_PL}, since we had $\delta_1 = 1$, and $\delta_2=\frac{1}{3}$, we see that this is consistent with what we found for $\mathcal{C}$.  For good measure, we can check our work.  Choose $j\in\N$.  Let $\hat f^{(j)}$ denote the $j$th term in $\mathcal{L}\mathbf{f}$.  Looking at Equation~\eqref{eq : well-sep_op_def} and Equation~\eqref{eq : PL_wellseperated}, it is clear that for $j\le N$, $\hat{f}^{(j)} = f^{(j)}$.  Assuming that $j>N$, define
\[k := \min\{k_0\in\N|j\le N^{k_0}\}\ge 2.\]
Using division, we find $j= N^{k-1}q+r$, for $0\le r<N^{k-1}$.  By assumption, we have $1\le q\le N$.  Indeed, if $q>N$, then $j > N^k$, a contradiction.  If $q = 0$, then $j =r < N^{k-1}$, another contradiction.

We consider 2 cases.  First, if $r=0$, then $q\ge 2$, because otherwise if $q=1$, then $j = N^{k-1}$, but $k-1<k$.  Hence, 
\[j = qN^{k-1}= N(qN^{k-2}-1)+N.\]
According to Equation~\eqref{eq : PL_wellseperated}, this means $f^{(j)} = \tau_{(0,c^{k-1}\delta_q)}$.  On the other hand, for all $t\in \R$, according to Equation~\eqref{eq : well-sep_op_def},
\[\hat{f}^{(j)}(t)= cf^{(qN^{k-2})}(c^{-1}t) = c\tau_{(0,c^{k-2}\delta_q)}(c^{-1}t) = \tau_{(0,c^{k-1}\delta_q)}(t).\]
Thus $f^{(j)} = \hat{f}^{(j)}$.

In the case of $r>0$, we apply division to $r$, so that $r = Na+b$, $a,b\in \N$, with $0\le a<N^{k-2}$, and $0\le b<N$.  This means that
\[j = qN^{k-1}+Na+b = N(qN^{k-2}+a-1)+N+b.\]
Since $r>0$, if $q = N$, then $j= N^k+r>N^k$, a contradiction.  Thus $1\le q\le N-1$.  Since $qN^{k-1}<j<(q+1)N^{k-1}$, by Equation~\eqref{eq : PL_wellseperated}, we have
\[f^{(j)} = \tau_{(0,c^{k-1}\delta_{q+1})}.\]
To make things easier, we consider 2 subcases.  If $b= 0$, then $a>0$, and $j = N(qN^{k-2}+a-1)+N$.  By Equation~\eqref{eq : well-sep_op_def} we have for all $t\in \R$
\[\hat{f}^{(j)}(t) = cf^{(qN^{k-2}+a)}(c^{-1}t)=c\tau_{(0,c^{k-2}\delta_{q+1})}(c^{-1}t)=\tau_{(0,c^{k-1}\delta_{q+1})}(t).\]
Hence $f^{(j)}= \hat f^{(j)}$.  If $b>0$, then we have $j = N(qN^{k-2}+a)+b$.  Since $qN^{k-2}<qN^{k-2}+a+1\le (q+1)N^{k-2}$, we have for all $t\in\R$
\[\hat{f}^{(j)}(t) = cf^{(qN^{k-2}+a+1)}(c^{-1}t)=c\tau_{(0,c^{k-2}\delta_{q+1})}(c^{-1}t) = \tau_{(0,c^{k-1}\delta_{q+1})}(t).\]
Thus $\hat{f}^{(j)} = f^{(j)}$ in this final case.  

Knowing that Equation~\eqref{eq : PL_wellseperated} is the correct formula for the persistence landscape for $H_0\CechMod(A)$, when $\Psi$ has well-separated images gives us a head start for finding the persistence landscape related to many IFS.  In practice, it can be difficult to compute the resolutions $\delta_1,...,\delta_N$ using only the functions in $\Psi$.  As we saw with $\mathcal{C}$, this can be straightforward in one-dimension.  We will apply Equation~\eqref{eq : PL_wellseperated} to more precisely describe the persistence landscape of $H_0\CechMod(A)$ when $\Psi$ is an IFS on $\R$ before looking at some more interesting examples.

Theorem \ref{thm : Op_WSI_IFS} applies to all IFS which satisfy the well-separated condition, but it is not as strong of a result as Theorem \ref{thm : cantorset_op}.  For that, we need an additional assumption on the IFS.

\begin{theorem}\label{thm : IFS_Op_Invariance}
Let $\Psi$ be a IFS on $\R^d$ consisting of $N$ similitudes each with scaling constant $c\in(0,1)$.  Let $A$ be the invariant set of $\Psi$ and let $\mathbf{f}_n\in L^\infty(\N\times\R)$ denote the persistence landscape resulting from $H_0\CechMod(S_n)$, where $S_{0} = E_{A}$.  Suppose that $\Psi$ has well-separated images and also satisfies the property that for all $j,k = 1,\dots,N$ and for all $n \in \mathbb{N}$,
\begin{equation}\label{eq : const_sep}
d(\psi_{j}(S_{n+1}), \psi_{k}(S_{n+1}) ) = d(\psi_{j}(S_{n}), \psi_{k}(S_{n}) ). 
\end{equation}
Let $\mathcal{L}:L^\infty(\N\times\R)\to L^\infty(\N\times\R)$ as defined in Equation~\eqref{eq : well-sep_op_def}. Then for all $n \in \mathbb{N}$,  $\mathcal{L}\mathbf{f}_{n} = \mathbf{f}_{n+1}$.
\end{theorem}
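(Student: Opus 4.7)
The plan is to follow the template of the proof of Theorem~\ref{thm : Op_WSI_IFS}, replacing the approximation estimates used there for arbitrary finite sets $F_n$ with exact identities tailored to the specific sequence $\{S_n\}$; the hypothesis~\eqref{eq : const_sep} is precisely what enables these exact identities. First, by induction on $n$ I would establish the structural facts $S_n \subseteq A$ and $E_A \subseteq S_n$, which together give $\diam(S_n) = \diam(A) = \delta_1$. The former uses $S_0 = E_A \subseteq A$ together with $\Psi(A) = A$. For the latter, Lemma~\ref{lemma : extreme_fixed} shows that each $x \in E_A$ is a fixed point of some $\psi_j$, so if $x \in S_n$ then $x = \psi_j(x) \in \psi_j(S_n) \subseteq S_{n+1}$. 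Under the convention that the infinite bar in $H_0\CechMod(S_n)$ is capped at the diameter of $A$, this forces $f_n^{(1)} = \tau_{(0,\delta_1)}$ for every $n$.

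Second, iterating the hypothesis~\eqref{eq : const_sep} yields that $d(\psi_j(S_m), \psi_k(S_m))$ is constant in $m$, and since $\psi_j(S_m) \to \psi_j(A)$ in the Hausdorff metric as $m \to \infty$, this common value equals $d(\psi_j(A), \psi_k(A))$. Combined with the well-separated estimate $\delta_N \geq c\delta_1 = \diam \psi_j(S_n)$, this implies that for $\varepsilon$ with $c\delta_1 \leq \varepsilon \leq \delta_1$, each $\psi_j(S_n)$ is a single $\varepsilon$-connected component of $S_{n+1}$, and two such components merge exactly at the scale equal to their mutual distance. Consequently, the separation scales $\eta_k$ associated to $S_{n+1}$ in the proof of Theorem~\ref{thm : Op_WSI_IFS} satisfy $\eta_k = \delta_k$ exactly for $k = 1,\ldots,N$, rather than only up to $O(1/n)$.

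Third, I would apply Lemma~\ref{lemma : dir_sum_hom} together with Lemma~\ref{cor : sim_hom_isomorphism} to obtain, for every $\varepsilon < \delta_N$, the isomorphism
\[ H_0\Cech(S_{n+1},\varepsilon) \;\cong\; \bigoplus_{j=1}^N H_0\Cech(S_n, c^{-1}\varepsilon), \]
which is natural in $\varepsilon$ and hence compatible with the inclusion-induced transition maps in both filtrations. Inserting this identity into the $\kmax$ description~\eqref{eq : kmaxdef} of landscape functions, exactly as done in Equations~\eqref{eq : hjs} and~\eqref{eq : hdef} of the proof of Theorem~\ref{thm : Op_WSI_IFS}, yields
\[ f_{n+1}^{(j)} = \tau_{(0,\delta_j)} \text{ for } j \in \{1,\ldots,N\}, \quad f_{n+1}^{(kN+l)}(t) = c\,f_n^{(k+1)}(c^{-1}t) \text{ for } k \in \mathbb{N},\ l \in \{1,\ldots,N\}, \]
which matches~\eqref{eq : well-sep_op_def} and thereby gives $\mathcal{L}\mathbf{f}_n = \mathbf{f}_{n+1}$.

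The principal obstacle will be the second step: showing that the separation scales $\eta_k$ of the finite set $S_{n+1}$ coincide exactly with the scales $\delta_k$ of the invariant set $A$, so no approximation error is introduced into the landscape. The hypothesis~\eqref{eq : const_sep} is what forces this: it prevents iteration of $\Psi$ from shifting any external merger scale away from $\{\delta_2,\ldots,\delta_N\}$, while the well-separated condition ensures that internal mergers within each $\psi_j(S_n)$ all occur at scales $\leq c\delta_1 \leq \delta_N$, so the merger analysis cleanly splits into internal and external contributions.
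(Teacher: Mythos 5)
Your proposal is correct and follows essentially the same route as the paper's proof: handle $j=1$ via $\diam(S_{n+1})=\diam(A)$, handle $j\in\{2,\dots,N\}$ by using hypothesis~\eqref{eq : const_sep} (passed to the limit) to identify the merger scales of $S_{n+1}$ with the $\delta_k$ of $A$ exactly, and handle $j>N$ via Lemma~\ref{lemma : dir_sum_hom} and the rank identity $\rank P^{s,t}=N\rank M^{c^{-1}s,c^{-1}t}$ fed into the landscape formula as in Theorem~\ref{thm : Op_WSI_IFS}. Your inductive justification that $E_A\subseteq S_n\subseteq A$ and your Hausdorff-limit argument for $d(\psi_j(S_n),\psi_k(S_n))=d(\psi_j(A),\psi_k(A))$ actually supply details the paper leaves implicit.
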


In other words, the commutative diagram in Equation \eqref{eq : com_dgm0} holds.

\begin{proof}
Let $\mathcal{L}\mathbf{f}_n = \left\{\hat{f}_n^{(j)}\right\}_{j=1}^\infty$. We must show that for all $j\in \N$
\begin{equation}\label{eq : elementwise}
    \hat{f}_n^{(j)} = f_{n+1}^{(j)}.
\end{equation}
For the case $j = 1$, this is true because $\diam(S_{n+1}) = \diam(A)$ implies
\[f_{n+1}^{(1)} = \tau_{(0,\delta_1)}.\]
For the other $j\in\{2,...,N\}$, Equation~\eqref{eq : elementwise} is equivalent to showing that for $\varepsilon \ge \delta_N$
\begin{equation}\label{eq : eps_components}
C(S_{n+1}, \varepsilon) = C(A,\varepsilon).
\end{equation}
Indeed, if $\varepsilon \ge \delta_N$, then by the well-separated assumption, each image $\psi_j(A)$ is contained in an  $\varepsilon$-component of $A$.  To prove Equation~\eqref{eq : eps_components}, define an equivalence relation, $\sim_A$, on the images $\{\psi_j(A)\}_{j=1}^N$ by saying
$\psi_j(A) \sim_{A} \psi_k(A)$ if and only if $\psi_j(A)$ and $\psi_k(A)$ belong to the same $\varepsilon$-component.  The number of distinct classes in $\{ \psi_{j}(A) \}/ \sim_{A}$ is equal to $C(A,\varepsilon)$ since $A = \cup_{j=1}^N\psi_j(A)$.  Similarly, we can define an equivalence relation, $\sim_{n+1}$, on $\{\psi_j(S_n)\}_{j=1}^N$ based on the $\varepsilon$-components of $S_{n+1}$ so that the number of distinct classes of $\sim_{n+1}$ equals $C(S_{n+1},\varepsilon)$.  We claim that for $j,k\in\{1,...,N\}$, 
\[\psi_{j}(A)\sim_A\psi_k(A)\:\:\text{if and only if}\:\:\psi_j(S_{n})\sim_{n+1}\psi_k(S_{n}).\]
Equation \eqref{eq : const_sep} implies that for all $n\in\N$
\begin{equation} \label{eq : const_sep2}
d(\psi_j(S_n),\psi_k(S_n)) = d(\psi_j(A),\psi_{k}(A)).
\end{equation}
In addition, $\psi_{j}(A)\sim_A\psi_k(A)$ implies that there is a finite sequence of images $\psi_{j}(A) = \psi_{j_0}(A), \psi_{j_1}(A), \dots, \psi_{j_m}(A) = \psi_{k}(A)$ with 
\[d(\psi_{j_l}(A),\psi_{j_{l+1}}(A)) \leq \varepsilon.\]
It now follows that $\psi_{j}(A)\sim_A\psi_k(A)$ implies  $\psi_j(S_{n})\sim_{n+1}\psi_k(S_{n})$.  The reverse implication follows similarly.  We have established that Equation~\eqref{eq : elementwise} holds for $j\le N$.

Now let $j >N$.  We write $j = kN + l$ for $k\in \N$ and $l\in \{1,...,N\}$.  For $0\le s\le t$, let $M^{s,t}$ and $P^{s,t}$ denote the homomorphisms induced by the inclusion mappings $\Cech(S_{n},s)\to\Cech(S_{n},t)$ and $\Cech(S_{n+1},s)\to \Cech(S_{n+1},t)$ respectively.  By Lemma~\ref{lemma : dir_sum_hom} along with the fact that both induced homomorphisms are surjective, for $t\le \delta_N$, 
\[\rank P^{s,t} = N\rank M^{c^{-1}s,c^{-1}t}.\]
By the same reasoning as in the proof of Theorem~\ref{thm : Op_WSI_IFS}
\begin{equation}
\begin{split}
    f_{n+1}^{(j)}(x) &= \sup\{m\ge 0 |\rank P^{x-m,x+m}\ge kN+l\}\\
    &= \sup\{m\ge 0 |\rank M^{c^{-1}x-m,c^{-1}x+m}\ge k+lN^{-1}\}\\
    &= c\sup\{m\ge 0 |\rank M^{c^{-1}x-m,c^{-1}x+m}\ge k+1\}
\end{split}
\end{equation}
and by the definition of $\mathcal{L}$, we have
\begin{equation*}
%\begin{split}
    \hat{f}_n^{(j)}(x)= cf_n^{(k+1)}(c^{-1}x)= c\sup\{m\ge 0 |\rank M^{c^{-1}x-m,c^{-1}x+m}\ge k+1\} = f_{n+1}^{(j)}(x). \qedhere
%    &= \sup\{m\ge 0|\rank M^{c^{-1}(x-m),c^{-1}(x+m)}\ge k+1\}
%\end{split}
\end{equation*}
\end{proof}

\subsection{Special Case: Dimension One}

Here we assume $\Psi = \{\psi_j\}_{j=1}^N$ is an IFS where each $\psi_j:\R\to\R$ is defined by $\psi_j(x) = c(x-b_j)$, for some $c\in(0,1)$, $b_1,...,b_N\in\R$.  We assume without loss of generality that $b_j\le b_{j+1}$.  For $j\in \{1,...,N-1\}$, we also define $a_j := b_{j+1}-b_j$.  We know that for each $j$, the fixed point of $\psi_j$ is \[x_j = \frac{cb_j}{1-c}.\]
From Lemma~\ref{lemma : extreme_fixed} we know that the extreme points of the $\conv A$ will be $x_1$ and $x_N$.  This means, 
\begin{equation*}
    \delta_1 = \diam A = \frac{c}{1-c}(b_N-b_1).
\end{equation*}
Also, for $j\not= k$, we know that $\psi_j(A)$ is just a translation of $\psi_k(A)$ by $c(b_j-b_k)$.  Hence,
\[d(\psi_j(A),\psi_k(A)) = c(b_j-b_k) - c\delta_1 = c(b_j-b_k)-\frac{c^2}{1-c}(b_N-b_1).\]
For each $j$, the closest other image to $\psi_j(A)$ is either $\psi_{j+1}(A)$ or $\psi_{j-1}(A)$.  Thus we know that $\Psi$ has well separated images if 
\begin{equation}
    \begin{split}
        \frac{c^2}{1-c}(b_N-b_1) &= \diam(\psi_j(A)) \le \min_{1\le j\le N-1}d(\psi_j(A),\psi_{j+1}(A))\\
        &= \min_{1\le j\le N-1}ca_j-\frac{c^2}{1-c}(b_N-b_1).
    \end{split}
\end{equation}
This is equivalent to 
\begin{equation}
    2\delta_1 = \frac{2c}{1-c}(b_N-b_1)\le \min_{1\le j\le N-1}a_j.
\end{equation}
It is also straightforward to compute $\delta_2,...,\delta_N$ in the one-dimensional case.

\begin{proposition}\label{prop : 1d_deltas}
Let $\Psi = \{\psi_j\}_{j=1}^N$ be an IFS consisting of similitudes on $\R$ with well-separated images and invariant set $A$.  Then $\delta_2,...,\delta_N$ as defined in Equation~\eqref{eq : resolutions_def}, are also given by
\[\delta_{k+1} = \kmax\{c(a_j-\delta_1)|\:1\le j\le N-1\}.\]
\end{proposition}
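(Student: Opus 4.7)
The plan is to reduce the computation to counting how many of the $N-1$ consecutive gaps along the real line have length at most $\varepsilon$, using the well-separated condition to ensure each image $\psi_j(A)$ behaves as a single ``bead'' at the relevant thresholds.

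First I would fix the ordering $b_1 \le b_2 \le \dots \le b_N$ so that the images $\psi_1(A), \dots, \psi_N(A)$ appear as $N$ pairwise-disjoint compact subsets arranged left to right along $\mathbb{R}$. Each image is a scaled copy of $A$ with diameter $c\delta_1$, and a direct computation from the extreme points $\psi_j(x_1), \psi_j(x_N)$ of $\conv \psi_j(A)$ yields
\[
d(\psi_j(A), \psi_{j+1}(A)) = c(a_j - \delta_1), \qquad 1 \le j \le N-1.
\]
The well-separated hypothesis gives $c(a_j-\delta_1) \ge c\delta_1 > 0$, so every such gap is strictly positive and bounded below by the common image diameter.

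Next I would show that for every $\varepsilon \ge c\delta_1$ each image $\psi_j(A)$ is $\varepsilon$-connected (since $\diam \psi_j(A) \le \varepsilon$), and that the linear ordering forces two distinct images $\psi_j(A)$ and $\psi_{j'}(A)$ (with $j < j'$) to lie in a common $\varepsilon$-component of $A$ if and only if every intermediate consecutive gap $c(a_l-\delta_1)$ with $j \le l < j'$ satisfies $c(a_l-\delta_1) \le \varepsilon$. Consequently, in the regime $\varepsilon \ge \delta_N$,
\[
C(A,\varepsilon) = N - \bigl| \{ l : c(a_l-\delta_1) \le \varepsilon \} \bigr|.
\]

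Finally, unpacking $\delta_{k+1} := \inf\{ \varepsilon > 0 : C(A,\varepsilon) \le k \}$ using this counting formula, the infimum is attained precisely when $N-k$ of the $N-1$ gaps have been crossed, which first happens at the $k$-th largest gap. Thus $\delta_{k+1} = \kmax\{ c(a_j - \delta_1) : 1 \le j \le N-1 \}$. The main subtlety will be certifying that the internal self-similar structure of each $\psi_j(A)$ never contributes an extra $\varepsilon$-component at the thresholds of interest; the well-separated hypothesis rules this out by forcing every outer gap, and hence every relevant $\varepsilon$, to dominate the piece diameter $c\delta_1$.
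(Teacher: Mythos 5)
Your proposal is correct and follows essentially the same route as the paper: both reduce the computation to counting which of the $N-1$ consecutive gaps $c(a_j-\delta_1)$ exceed $\varepsilon$, using the well-separated hypothesis to guarantee each image $\psi_j(A)$ is a single $\varepsilon$-connected ``bead'' whenever $\varepsilon \ge c\delta_1$ (and contributes at least one component when $\varepsilon < c\delta_1$). Your closed-form count $C(A,\varepsilon) = N - \left|\{l : c(a_l-\delta_1)\le\varepsilon\}\right|$ is just a compact restatement of the paper's iterative component-counting argument.
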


\begin{proof}
For convenience, let $\rho_k = \kmax\{c(a_j-\delta_1)|\:1\le j\le N-1\}$ for $k\in\{2,...,N\}$. First we claim that for $\varepsilon>0$ and $N-1\ge k\ge 1$, $\varepsilon\ge \rho_k$ if and only if $C(A,\varepsilon)\le k$.  Indeed, first if we assume $\varepsilon\ge \rho_k$, then 
\[|\{c(a_j-\delta_1)>\varepsilon|\:1\le j\le N-1\}| < k.\]
Our assumption also implies that $\varepsilon \ge c\delta_1$, since otherwise, because $\Psi$ has well-separated images,
\[\begin{split}
    \varepsilon&<c\delta_1 = \max_{1\le j\le N}\diam\psi_j(A)\le \min_{1\le j\not=k\le N}d(\psi_j(A),\psi_k(A))\\
    &= \text{Nmax}\{c(a_j-\delta_1)|\:1\le j\le N-1\}\le \rho_k,
    \end{split}\]
a contradiction.  Since $\varepsilon \ge \diam\psi_j(A)$ for all $j\in\{1,...,N\}$, each image $\psi_j(A)$ is contained in exactly 1 $\varepsilon$-component of $A$.  We can count the $\varepsilon$-components in the following way.  Start the count at $q = 1$.  For each $j \in\{1,...,N-1\}$, we check the distance between $\psi_j(A)$ and $\psi_{j+1}(A)$.  If $d(\psi_j(A),\psi_{j+1}(A))\le\varepsilon$, then $\psi_{j+1}(A)$ belongs to the same $\varepsilon$-component, so the count $q$ remains at the current value.  If $d(\psi_j(A),\psi_{j+1}(A))>\varepsilon$, then $\psi_{j+1}(A)$ belongs to a different $\varepsilon$-component from that containing $\psi_l(A)$ for $1\le l\le j$.  In this case we update $q$ to equal $q+1$.  Since $d(\psi_j(A),\psi_{j+1}(A)) = c(a_j-\delta_1)$, we will update $q$ at most $k-1$ times.  Thus $C(A,\varepsilon)\le k$.

Conversely, if $\varepsilon <\rho_k$, then 
\begin{equation}\label{eq : big_gaps}
|\{c(a_j-\delta_1)>\varepsilon|1\le j\le N-1\}|\ge k.
\end{equation}
We consider two cases, first if $\varepsilon < c\delta_1$, then again since $\Psi$ has well-separated images we have
\[\varepsilon<\min_{1\le j\not=k\le N}d(\psi_{j}(A),\psi_k(A)).\]
This implies that each image $\psi_j(A)$ contains at least 1 $\varepsilon$-component of $A$.  Hence $C(A,\varepsilon)\ge N> k$ as desired.  In the second case, we may have $\varepsilon\in[c\delta_1,\rho_k)$, then since 
\[\varepsilon \ge \max_{1\le j\le N}\diam\psi_j(A),\]
we may repeat the counting of connected components as described in the previous paragraph.  We start with the count at $q=1$.  Equation~\eqref{eq : big_gaps} implies that we must update $q$ at least $k$ times.  Therefore $C(A,\varepsilon)\ge k+1>k$ as desired.  This proves the claim.

It follows from the claim that $\rho_k\in\{\varepsilon>0|C(A,\varepsilon)\le k\}$.  Thus $\delta_{k+1} \le \rho_k$.  Conversely, since $\rho_k$ is a lower bound, we have that 
\[\rho_k \le \inf\{\varepsilon>0|C(A,\varepsilon)\le k\} = \delta_{k+1}.\qedhere\]
\end{proof}

Now we are ready to state the main consequence of Theorem \ref{thm : Op_WSI_IFS} for IFS on $\R$.

\begin{corollary}\label{cor : 1d_IFS_PL}
Let $\Psi = \{\psi_j\}_{j=1}^N$ be an IFS of similitudes on $\R$ of the form $\psi_j(x) = c(x+b_j)$, for $c\in(0,1)$ and $b_j\in\R$.  Assume $b_j\le b_{j+1}$ and let $a_j = b_{j+1}-b_j$.  $\Psi$ has well-separated images if and only if 
\begin{equation}\label{eq : 1d_well-separated}
    \frac{2c}{1-c}(b_N-b_1)\le \min_{1\le j\le N-1}a_j.
\end{equation}
Moreover, if Equation~\eqref{eq : 1d_well-separated} holds and $A$ is the invariant set of $\Psi$, then $\mathbf{f}= \{f^{(j)}\}_{j=1}^\infty\in L^\infty(\N\times\R)$, the persistence landscape of $H_0\CechMod(A)$, is given by
\begin{equation}\label{eq : 1d_IFS_PLfunction}
    f^{(j)} = \begin{cases}
    \tau_{(0,\delta_j)} &\text{if}\:\: 1\le j\le N\\
    \tau_{(0,c^k\delta_l)} &\text{if}\:\: (l-1)N^{k}<j\le l N^{k},\:\:k,l\in\N,\:2\le l\le N\\
    \end{cases},
\end{equation}
where $\delta_1 = \frac{c}{1-c}(b_N-b_1)$, and for $k\in\{1,...,N-1\}$,
\[\delta_{k+1} = \kmax\{c(a_l-\delta_1)|\:1\le l\le N-1\}.\]
\end{corollary}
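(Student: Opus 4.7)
My plan is to assemble the corollary directly from the pieces developed earlier in the section: the one-dimensional geometric calculations preceding Proposition \ref{prop : 1d_deltas}, Proposition \ref{prop : 1d_deltas} itself, Theorem \ref{thm : Op_WSI_IFS}, and the explicit formula in Equation \eqref{eq : PL_wellseperated}. There are essentially two independent claims to prove: the well-separated characterization \eqref{eq : 1d_well-separated}, and the landscape formula \eqref{eq : 1d_IFS_PLfunction}.

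For the first claim, I would recall the calculations already done in the text: by Lemma \ref{lemma : extreme_fixed}, the extreme points of $\conv A$ are the fixed points corresponding to $b_1$ and $b_N$, giving $\delta_1 = \diam A = \tfrac{c}{1-c}(b_N - b_1)$. Since each $\psi_j(A)$ is a translate of $\psi_k(A)$ by $c(b_j - b_k)$, one has $\diam \psi_j(A) = c\delta_1$ for every $j$, and (assuming the images remain in their natural left-to-right order) $d(\psi_j(A),\psi_{j+1}(A)) = ca_j - c\delta_1$. The minimum inter-image distance is then $\min_j(ca_j - c\delta_1)$, and the well-separated condition $c\delta_1 \le \min_j(ca_j - c\delta_1)$ rearranges to $\tfrac{2c}{1-c}(b_N - b_1) \le \min_j a_j$. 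I will also need a brief sanity check that the images really are ordered left-to-right, which follows because each $\psi_j(A)$ sits in the interval $[\psi_j(x_1), \psi_j(x_N)]$ of length $c\delta_1$, and the well-separated condition (equivalently, $2\delta_1 \le \min_j a_j$) forces these intervals to appear in the same order as the $b_j$.

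For the second claim, assuming \eqref{eq : 1d_well-separated} holds, $\Psi$ has well-separated images, so Theorem \ref{thm : Op_WSI_IFS} applies: the persistence landscape $\mathbf{f}$ of $H_0\CechMod(A)$ is the unique fixed point of the contraction $\mathcal{L}$ defined in \eqref{eq : well-sep_op_def}. The explicit formula \eqref{eq : PL_wellseperated}, derived in the discussion immediately after Theorem \ref{thm : Op_WSI_IFS} (and verified there case-by-case via division), gives
\[ f^{(j)} = \tau_{(0,\delta_j)} \text{ for } 1 \le j \le N, \qquad f^{(j)} = \tau_{(0, c^k \delta_l)} \text{ for } (l-1)N^k < j \le lN^k,\ k \in \N,\ 2 \le l \le N. \]
The values of $\delta_k$ are then supplied by Proposition \ref{prop : 1d_deltas}, which gives exactly $\delta_{k+1} = \kmax\{c(a_l - \delta_1) : 1 \le l \le N-1\}$.

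The main obstacle, such as there is one, is simply the careful bookkeeping in the characterization step, in particular verifying that the minimum over all pairs $d(\psi_j(A), \psi_k(A))$ is achieved on adjacent indices, which is where the ordering argument above is needed. Everything else is a direct invocation of the results already established in the section.
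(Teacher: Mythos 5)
Your proposal is correct and follows essentially the same route as the paper: the paper's own proof of this corollary is exactly a citation of the one-dimensional distance calculations preceding Proposition \ref{prop : 1d_deltas}, that proposition for the values of $\delta_2,\dots,\delta_N$, and Theorem \ref{thm : Op_WSI_IFS} together with Equation \eqref{eq : PL_wellseperated} for the landscape formula. Your extra care in checking that the images appear in left-to-right order (so that the pairwise minimum is attained on adjacent indices) is a small but welcome elaboration of a point the paper asserts without comment.
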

\begin{proof}
As reasoned above, $\Psi$ having well-separated images is equivalent to Equation~\eqref{eq : 1d_well-separated}.  It is also explained above that $\delta_1=\diam(A) = \frac{c}{1-c}(b_N-b_1)$, and the formula for the other $\delta_k$'s is a consequence of Proposition~\ref{prop : 1d_deltas}.  The formula in Equation~\eqref{eq : 1d_IFS_PLfunction} follows from Theorem~\ref{thm : Op_WSI_IFS}.
\end{proof}

\section{Examples}
We are now ready to present a series of examples of iterated function systems and the corresponding persistence landscapes resulting from the invariant set.  Our goal is to illustrate the relationship between the persistence landscape and the IFS.  Some of our examples will have well-separated images, meaning that we can readily apply the results above to compute the persistence landscape of $H_0\CechMod(A)$.  Other examples will require some additional work, but the reasoning should be similar to that used to prove Theorem~\ref{thm : Op_WSI_IFS}.  We are also able to check our work by approximating $A$ and computing the persistence landscape using the Scikit-TDA library in Python.

%We will use the following elementary result from CITE in our calculations:
%\begin{proposition}\label{prop : surj_inclusion}
%    Let $X\subset\R^d$ be a finite set.  For $0\le a\le b$, the induced homomorphism $i_*:H_0\Cech(X,a)\to H_0\Cech(X,b)$ is surjective, where $i$ is the inclusion map from $\Cech(X,a)$ to $\Cech(X,b)$.
%\end{proposition}

\subsection{Right 1/3 Cantor Set} \label{ssec:right-Cantor}
Consider the IFS $\Psi = \{\psi_1,\psi_2\}$ where
\[\psi_1(x) = \frac{1}{3}x,\:\:\psi_2(x) = \frac{1}{3}x+\frac{1}{3}.\]
In this case, we have $c = \frac{1}{3}$, $b_1 = 0$, and $b_2=1$.  This means $\Psi$ has well-separated images since
\[\frac{2c}{1-c}(b_2-b_1) = 1 = \min_{1\le j\le N-1}a_j.\]
Clearly, $\delta_1 = \frac{1}{2}$, and 
\[\delta_2 = \frac{1}{3}(1-\delta_1) = \frac{1}{6}\]
By Corollary~\ref{cor : 1d_IFS_PL}, this means the persistence landscape of $H_0\Cech(A)$ is $\mathbf{f} = \{f^{(j)}\}_{j=1}^\infty$, where
\[ f^{(j)} = \begin{cases}
    \tau_{(0,\tfrac{1}{2})} &\text{if}\:\: j=1\\
    \tau_{(0,\tfrac{1}{6})} &\text{if}\:\: j=2\\
    \tau_{(0,3^{-k}\tfrac{1}{6})} &\text{if}\:\: 2^{k}<j\le 2^{k+1},\:\:k\in\N,\:\\
    \end{cases}
\]

\subsection{1/5 Cantor Set} \label{ssec:fifth-Cantor}
Consider the IFS $\Psi = \{\psi_1,\psi_2,\psi_3\}$ where
\[\psi_1(x) = \frac{1}{5}x,\:\:\psi_2(x) = \frac{1}{5}x+\frac{2}{5},\:\:\psi_3(x) = \frac{1}{5}x+\frac{4}{5}.\]
In this case, we have $c = \frac{1}{5}$, $b_1 = 0$, $b_2 = 2$, and $b_3 = 4$.  Again, we know that $\Psi$ has well-separated images since 
\[\frac{2c}{1-c}(b_3-b_1) = 2 = \min_{1\le j\le N-1}a_j.\]\
Clearly, $\delta_1 = 1$ and 
\[\delta_2 = \delta_3 = \frac{1}{5}(2-\delta_1) = \frac{1}{5}.\]
By Corollary~\ref{cor : 1d_IFS_PL}, this means that the persistence landscape of $H_0\Cech(A)$ is $\mathbf{f} = \{f^{(j)}\}_{j=1}^\infty$, where
\[f^{(j)} = \begin{cases}
    \tau_{(0,1)} &\text{if}\:\: j=1\\
    \tau_{(0,\tfrac{1}{5})}  &\text{if}\:\:j=2,3\\
    \tau_{(0,5^{-k-1})} &\text{if}\:\: 3^{k}<j\le  3^{k+1},\:\:k\in\N
    \end{cases}\]
    
\subsection{1/6 Cantor Set} \label{ssec:sixth-Cantor}
Consider the IFS $\Psi = \{\psi_1,\psi_2,\psi_3\}$ where
\[\psi_1(x) = \frac{1}{6}x,\:\:\psi_2(x) = \frac{1}{6}x+\frac{2}{6},\:\:\psi_3(x) = \frac{1}{6}x+\frac{5}{6}.\]
In this case, we have $c = \frac{1}{6}$, $b_1 = 0$, $b_2 = 2$, and $b_3 = 5$.  We know that $\Psi$ has well-separated images since 
\[\frac{2c}{1-c}(b_3-b_1) = 2 = \min_{1\le j\le N-1}a_j.\]\
We compute $\delta_1 = 1$, 
\[\delta_2 = \frac{1}{6}(3-\delta_1) = \frac{1}{3},\:\:\text{and}\:\:\delta_3= \frac{1}{6}(2-\delta_1) = \frac{1}{6}.\]
By Corollary~\ref{cor : 1d_IFS_PL}, this means that the persistence landscape of $H_0\Cech(A)$ is $\mathbf{f} = \{f^{(j)}\}_{n=1}^\infty$, where
\[f^{(j)} = \begin{cases}
    \tau_{(0,1)} &\text{if}\:\: j=1\\
    \tau_{(0,\tfrac{1}{3})} &\text{if}\:\: j=2\\
    \tau_{(0,\tfrac{1}{6})} &\text{if}\:\: j=3\\
    \tau_{(0,6^{-k}\tfrac{1}{3})} &\text{if}\:\: 3^{k}<j\le 2\cdot 3^{k},\:\:k\in\N,\\
    \tau_{(0,6^{-k-1})} &\text{if}\:\: 2\cdot 3^{k}<j\le 3^{k+1},\:\:k\in\N\\
    \end{cases}\]

\subsection{Modified 1/5 Cantor Set} \label{ssec : mod1/5}

Let 
\[\Psi = \{\psi_1,\psi_2,\psi_3\},\:\:\psi_1(x) = \frac{1}{5}x,\:\psi_2(x) = \frac{1}{5}(x+1),\:\:\psi_3(x) = \frac{1}{5}(x+4).\]
In this case, $b_1 = 0$, $b_2 = 1$, and $b_3 = 4$.  This means $\delta_1 = 1$.  $\Psi$ does not have well separated images since $d(\psi_1(A),\psi_2(A)) = 0$ because $\frac{1}{5}\in\psi_1(A)\cap\psi_2(A)$.  Although Theorem~\ref{thm : Op_WSI_IFS} does not apply, we still claim that the map $\mathcal{L}:L^\infty(\N\times\R)\to L^\infty(\N\times\R)$ defined by $\mathcal{L}\mathbf{g} = \mathbf{h}$, where
\[\begin{split}
h^{(1)} = \tau_{(0,1)},\:\: h^{(2)} &= \tau_{(0,2/5)}\\
h^{(3k)}(x) = h^{(3k+1)}(x) = h^{(3k+2)}(x) &= \frac{1}{5}g^{(k+1)}(5x)\:\:\text{for}\:\:k\in\mathbb{N}.
\end{split}\]  
satisfies $\mathcal{L}\mathbf{f} = \mathbf{f}$ where $\mathbf{f}$ is the persistence landscape resulting from $H_0\Cech(A)$.  To see why, we construct an increasing sequence of sets as follows
\[S_1=\left\{0,\frac{1}{5}, \frac{2}{5},\frac{4}{5},1\right\}, S_{n+1} = \Psi(S_n)\:\:\text{for}\:\:n\in\N.\]
It follows from Lemma~\ref{lemma : extreme_fixed} that $S_n\subset S_{n+1}\subset A$ for all $n\in\N$.  The first containment follows from the fact that $S_1 = \Psi(E_A)$.  Let $\mathbf{f}_n\in L^\infty(\N\times\R)$ be the persistence landscape of $H_0\Cech(S_n)$. 

We claim that for all $n\in\N$, $\mathcal{L}\mathbf{f}_n = \mathbf{f}_{n+1}$.  Indeed, fix $n\in\N$.  First observe that 
\[\diam(S_n) = \diam(A) = 1.\]
Thus $f_{n+1}^{(1)} = \tau_{(0,1)}$.  As before for $s\le t$, let $M^{s,t}:H_0\Cech(S_n,s)\to H_0\Cech(S_n,t)$ and $P^{s,t}:H_0\Cech(S_{n+1},s)\to H_0\Cech(S_{n+1},t)$ denote the homomorphisms induced by the obvious inclusion mappings. % $\Cech(S_n,s)\to\Cech(S_{n},t)$ and $\Cech(S_{n+1},s)\to\Cech(S_{n+1},t)$ respectively.  
Since $\Cech(S_{n+1},\varepsilon)$ is path connected for $\varepsilon\ge \frac{2}{5}$, this implies that $\rank H_0\Cech(S_{n+1},\varepsilon) = 1$.  On the other hand, for $\varepsilon <\frac{2}{5}$, we know that $\rank H_0\Cech(S_{n+1},\varepsilon)\ge 2$ since $\Cech(S_{n+1},\varepsilon)$ is not path connected.  In the final case, we know that $\rank H_0\Cech(S_{n+1},\varepsilon) = 0$ for $\varepsilon <0$.  Thus it follows by definition and the same reasoning used in the proof of Theorem~\ref{thm : Op_WSI_IFS} that
\begin{equation}
    f_{n+1}^{(2)}(t) = \sup\{m\ge 0|\rank P^{t-m,t+m}\ge 2\} = \max\{t,\tfrac{2}{5}-t\} = \tau_{(0,\frac{2}{5})}.
\end{equation}
To compute $f_{n+1}^{(j)}$ for $j\ge 3$, the key observation is that for $0\le s\le t<\frac{2}{5}$, 
\begin{equation}\label{eq : rankPst}
    \rank P^{s,t} = 3\rank M^{5s,5t} -1.
\end{equation}
Also, it is clear that for $s <0$ or $t\ge \frac{2}{5}$, $\rank P^{s,t} \le 1$.  To justify Equation~\eqref{eq : rankPst}, we look at $\Cech(S_{n+1},\varepsilon)$ for $\varepsilon\in (0,\frac{2}{5})$.  For $m\in\{1,2,3\}$ let $K_m$ denote  $\Cech(\psi_m(S_{n}),\varepsilon)\subset\Cech(S_{n+1},\varepsilon)$.  A simple illustration of this is given in Figure~\ref{fig : mod1/5}.  Since $\varepsilon <\frac{2}{5}$, $K_3$ is not path connected to $K_1\cup K_2$.  By Lemma~\ref{lemma : dir_sum_hom} and Corollary~\ref{cor : sim_hom_isomorphism}, we see that
\begin{equation}\label{eq : HcechSj+1}
    H_0\Cech(S_{n+1},\varepsilon)\cong H_0(K_1\cup K_2)\oplus H_0(K_3)\cong H_0(K_1\cup K_2)\oplus H_0\Cech(S_n,5\varepsilon).
\end{equation}
From the Mayer-Vietoris sequence, we have the following exact sequence
\[H_1(K_1\cup K_2)\longrightarrow H_0(K_1\cap K_2)\longrightarrow H_0(K_1)\oplus H_0(K_2)\longrightarrow H_0(K_1\cup K_2)\longrightarrow 0.\]
Since $S_{n+1}\subset\R$, and $K_1\cap K_2 = \{\frac{1}{5}\}$, this sequence is equivalent to
\begin{equation}
    0\longrightarrow \Z_2\overset{\varphi_1}{\longrightarrow} H_0(K_1)\oplus H_0(K_2)\overset{\varphi_2}{\longrightarrow} H_0(K_1\cup K_2)\longrightarrow 0.
\end{equation}
Exactness implies that $\varphi_2$ is surjective and $\ker\varphi_2\cong \Z_2$.    Thus 
\[\dim H_0(K_1\cup K_2) = \dim H_0(K_1)+\dim H_0(K_2) -1.\]
Since $H_0(K_m)\cong H_0\Cech(S_n,5\varepsilon)$ for $m\in\{1,2,3\}$, Equation~\eqref{eq : HcechSj+1} implies that $$\dim[H_0\Cech(S_{n+1},\varepsilon)] = 3\dim[H_0\Cech(S_n,5\varepsilon)]-1.$$  Since $M^{s,t}$ and $P^{s,t}$ are always surjective when $s>0$, Equation~\eqref{eq : rankPst} follows.

Now we may argue as we did for Theorem~\ref{thm : Op_WSI_IFS}.  For $j= 3k+m$ with $k\in\N$, $m\in\{0,1,2\}$, by definition
\begin{equation}
    \begin{split}
    f_{n+1}^{(j)}(t) &= \sup\{m\ge 0 | \rank P^{t-m,t+m}\ge j\}\\
    &= \sup\{m\ge 0|3\rank M^{5(t-m),5(t+m)}-1\ge j\}\\
    &= \sup\{m\ge 0|\rank M^{5(t-m),5(t+m)}\ge k+1\}
    \end{split}
\end{equation}
On the other hand, for $k\in\N$, we have
\[\begin{split}
    \frac{1}{5}f_n^{(k+1)}(5t)& = \frac{1}{5}\sup\{m\ge 0|\rank M^{5t-m,5t+m}\ge k+1\}\\
    &= \sup\{\tfrac{m}{5}\ge 0|\rank M^{5t-m,5t+m}\ge k+1\}\\
    &= \sup\{m\ge 0|\rank M^{5(t-m),5(t+m)}\ge k+1\}\\
    & = f_{n+1}^{(j)}(t).
\end{split}\]
This proves the claim that $\mathcal{L}\mathbf{f}_n = \mathbf{f}_{n+1}$.  

\begin{figure}
    \centering
    \includegraphics[scale = 0.65]{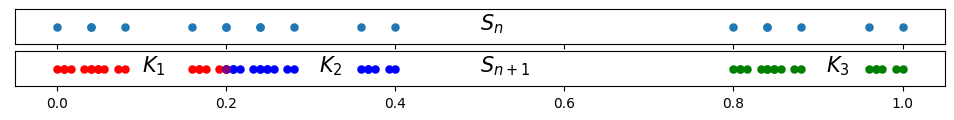}
    \caption{The three sub-complexes in the modified 1/5 Cantor set  for $n = 3$.}
    \label{fig : mod1/5}
\end{figure}

From Theorem~\ref{thm : Hutch1981}, we know that $\lim_{n\to\infty}d_H(S_n,A) = 0$.  Thus $\lim_{n\to\infty}\mathbf{f}_n = \mathbf{f}$.  Since $\mathcal{L}$ is Lipschitz on $L^\infty(\N\times\R)$, it is continuous.  Thus
\[\mathcal{L}\mathbf{f} = \lim_{n\to\infty}\mathcal{L}\mathbf{f}_n = \lim_{n\to\infty}\mathbf{f}_{n+1} = \mathbf{f}.\]
as claimed.

To obtain the formula for $\mathbf{f} = \{f^{(j)}\}_{j=1}^\infty$, we compute $\lim_{n\to\infty}\mathcal{L}^n\mathbf{0}$ in $L^\infty(\N\times\R)$.  We find that

\[f^{(j)} = \begin{cases}
\tau_{(0,1)}   & \text{if}\:\: j= 1\\
\tau_{(0,\tfrac{2}{5})} & \text{if}\:\: j=2\\
\tau_{(0,\tfrac{2}{5^{k+2}})} & \text{if}\:\: 2+\tfrac{3}{2}(3^k-1)<j\le 2+\tfrac{3}{2}(3^{k+1}-1),\:\text{where}\:k+1\in\N 
\end{cases}.\]

We illustrate this landscape in Figure \ref{fig : cantor-5} (produced by pyscapes~\cite{angeloro_pyscapes_2020}).

\begin{figure}[h!]
    \includegraphics[width=12cm]{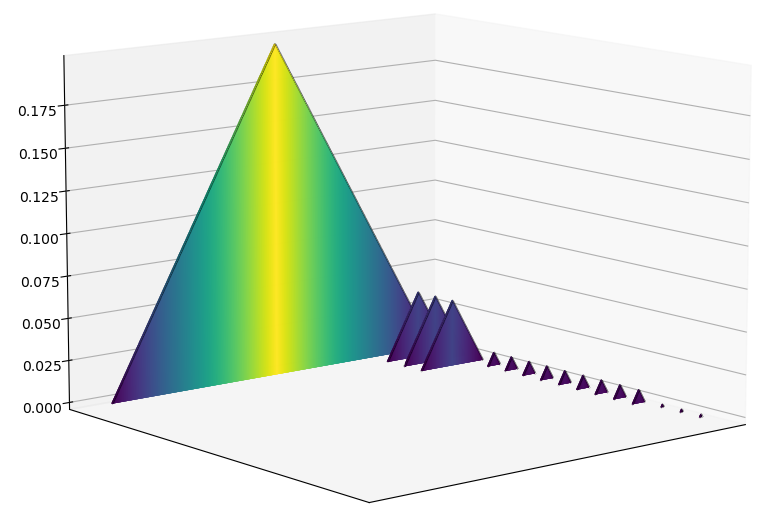}
    \caption{Graph of the functions $f_{2}, \dots, f_{16}$ from the persistence landscape of the modified $1/5$ Cantor set $\mathcal{C}$. }
    \label{fig : cantor-5}
\end{figure}

\subsection{Cantor Triangle} \label{ssec:Cantor-triangle}
Let us consider a 2-dimensional example.  Consider the IFS on $[0,1]^2$, $\Psi = \{\psi_1, \psi_2,\psi_3\}$, where
\[\psi_1(x,y) = \frac{1}{3}I_2(x,y)^T,\:\:\psi_{2}(x,y) = \frac{1}{3}I_2[(x,y)+(0,2)]^T,\:\:\psi_{3}(x,y) = \frac{1}{3}I_2[(x,y)+(2,0)]^T,\]
with $I_2$ denoting the $2\times 2$ identity matrix.  The set of extreme points is $S_0 = \{(0,0), (0,1),(1,0)\}$.  However $\Psi$ does not have well-separated images since
\[\min_{1\le j\not=k\le N}d(\psi_j(A),\psi_k(A))= \frac{1}{3}<\frac{\sqrt{2}}{3}= \max_{1\le j\le N}\diam \psi_j(A).\]
Despite this, the formula in Equation~\eqref{eq : PL_wellseperated} still applies. To see why, define $\mathcal{L}:L^\infty(\N\times\R)\to L^\infty(\N\times\R)$ by $\mathcal{L}\mathbf{g} = \mathbf{h}$, where
\[\begin{split}
h^{(1)} &= \tau_{(0,\sqrt{2})},\:\: h^{(2)} = h^{(3)} = \tau_{(0,1/3)} ,\\
h^{(3k)}(x) &= h^{(3k+1)}(x) = h^{(3k+2)}(x) = \frac{1}{3}g^{(k+1)}(3x)\:\:\text{for}\:\:k\in\mathbb{N}.
\end{split}.\]
Note that as in Equation~\eqref{eq : PL_wellseperated}, we have $\delta_1 = \sqrt{2}$ and $\delta_2= \delta_3 = \frac{1}{3}$.  

Define a sequence of sets $\{S_n\}_{n=1}^\infty$ by $S_1 =\Psi(S_0)$, and $S_{n+1} = \Psi(S_n)$ for $n\in\N$.  Note that $S_n\subset S_{n+1}\subset A$ for all $n\in\N$ and $\lim_{n\to\infty}d_H(S_n,A) = 0$.  Let $\mathbf{f}_n\in L^\infty(\N\times\R)$ be the persistence landscape of $H_0\Cech(S_n)$.  As before, $\lim_{n\to\infty}\mathbf{f}_n = \mathbf{f}$ where $\mathbf{f}$ is the persistence landscape of $H_0\Cech(A)$.  Thus, we claim $\mathcal{L}\mathbf{f}_n = \mathbf{f}_{n+1}$.  

\begin{figure}
    \centering
    \includegraphics{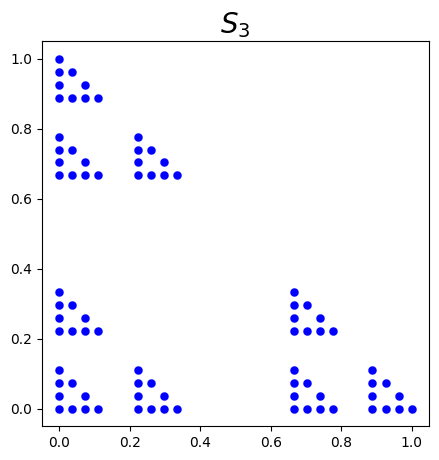}
    \caption{An illustration of $S_3$ for the Cantor triangle.}
    \label{fig : cantor_tri}
\end{figure}

Indeed, choose $n\in\N$.  Since $\diam(A) = \sqrt{2}$, it is clear that $f_{n+1}^1 =\tau_{(0,\sqrt{2})}$.  For $s\le t$, let $M^{s,t}:H_0\Cech(S_n,s)\to H_0\Cech(S_n,t)$ and $P^{s,t}:H_0\Cech(S_{n+1},s)\to H_0\Cech(S_{n+1},t)$ denote the homomorphisms induced by the obvious inclusion mappings.
% $\Cech(S_n,s)\to\Cech(S_{n},t)$ and $\Cech(S_{n+1},s)\to\Cech(S_{n+1},t)$ respectively.  
Since $\Cech(S_{n+1},\varepsilon)$ is path connected for $\varepsilon\ge \frac{1}{3}$, this implies that $\rank H_0\Cech(S_{n+1},\varepsilon) = 1$.  On the other hand, for $\varepsilon <\frac{1}{3}$, we know that $\rank H_0\Cech(S_{n+1},\varepsilon)\ge 3$ since $\Cech(S_{n+1},\varepsilon)$ is not path connected, with $\psi_1(S_n),\psi_2(S_n),\psi_3(S_n)\subset S_{n+1}$ each containing at least one distinct $\varepsilon$-component of $S_{n+1}$.  
% We also know that for $\varepsilon <0$, $\rank H_0\Cech(S_{n+1},\varepsilon) = 0$. 
Therefore, for $t\in\R$, $j = 2$ or $j = 3$
\[f_{n+1}^{(j)}(t) = \sup\{m\ge 0 \, | \, \rank P^{t-m,t+m}\ge 3\} = \max\{t,\tfrac{1}{3}\} = \tau_{(0,\tfrac{1}{3})}(t).\]

To compute $f_{n+1}^{(j)}$ for $j\ge 4$, the key observation is that for $0\le s\le t<\frac{1}{3}$,
\begin{equation}\label{eq : rankeq_triangle}
    \rank P^{s,t} = 3\rank M^{3s,3t},
\end{equation}
and for $s<0$ or $t\ge \frac{1}{3}$, we get $\rank P^{s,t}\le 1$.  The second part is clear.  To justify Equation~\eqref{eq : rankeq_triangle}, note that for $\varepsilon <\frac{1}{3}$, since
\[\min_{1\le j\not= k\le 3}d(\psi_j(S_n),\psi_k(S_n)) = \frac{1}{3},\]
it follows from Lemma~\ref{lemma : dir_sum_hom} that 
\[H_0\Cech(S_{n+1},\varepsilon)\cong\bigoplus_{j=1}^3 H_0\Cech(S_n,3\varepsilon).\]
Since every nontrivial transformation in $H_0 \CechMod(S_n)$ and $H_0 \CechMod(S_{n+1})$
is a surjection, the previous isomorphism implies 
Equation~\eqref{eq : rankeq_triangle}.
% By Proposition~\ref{prop : surj_inclusion}, this isomorphism implies Equation~\eqref{eq : rankeq_triangle}.  
If we assume $j = 3k+l$ for $k\in\N$, and $l\in\{1,2,3\}$, then by definition,
\begin{equation}
    \begin{split}
    f_{n+1}^{(j)}(t) &= \sup\{m\ge 0 | \rank P^{t-m,t+m}\ge j\}\\
    &= \sup\{m\ge 0|3\rank M^{3(t-m),3(t+m)}\ge j\}\\
    &= \sup\{m\ge 0|\rank M^{3(t-m),3(t+m)}\ge k+1\}
    \end{split}
\end{equation}
On the other hand, for $k\in\N$, we have
\[\begin{split}
    \frac{1}{3}f_{n}^{(k+1)}(3t)& = \frac{1}{3}\sup\{m\ge 0|\rank M^{3t-m,3t+m}\ge k+1\}\\
    &= \sup\{\tfrac{m}{3}\ge 0|\rank M^{3t-m,3t+m}\ge k+1\}\\
    &= \sup\{m\ge 0|\rank M^{3(t-m),3(t+m)}\ge k+1\}\\
    & = f_{n+1}^{(j)}(t).
\end{split}\]
This proves the claim that $\mathcal{L}\mathbf{f}_n = \mathbf{f}_{n+1}$, meaning $\mathcal{L}\mathbf{f} = \mathbf{f}$.

When we compute the fixed point of $\mathcal{L}$ by taking the limit $\lim_{n\to\infty}\mathcal{L}^n\mathbf{0}$, we find that $\mathbf{f} = \{f^{(j)}\}_{j=1}^\infty$, where
\[f^{(j)} = \begin{cases}
\tau_{(0,\sqrt{2})}, &\text{if}\:\:j=1\\
\tau_{(0,3^{-1})}, &\text{if}\:\:1<j\le 3\\
\tau_{(0,3^{-k-1})}, &\text{if}\:\:3^{k}<j\le 3^{k+1}\:\:\text{for}\:\:k\in\N
\end{cases}.\]

\subsection{Distorted Sierpinksi Carpet} \label{ssec:Sierpinski-carpet}
Consider another IFS on $\R^2$, $\Psi = \{\psi_1, \psi_2, \psi_3, \psi_4\}$, where
\[\begin{matrix}
    \psi_1(x,y) = \frac{1}{3}I_2(x,y)^T,\hspace{1.6cm} &\psi_{2}(x,y)=\frac{1}{3}I_2[(x,y)+(2,0)]^T,\\
    \psi_3(x,y) = \frac{1}{3}I_2[(x,y)+(0,1)]^T, & \psi_4(x,y) = \frac{1}{3}I_2[(x,y)+(2,1)]^T.
    \end{matrix}\]
$A$, the invariant set of $\Psi$, turns out to be $\mathcal{C}\times\frac{1}{2}\mathcal{C}$.  Clearly, $\Psi$ does not have well-separated images since 
\[\min_{1\le j\not= k\le 4}d(\psi_j(A),\psi_k(A)) = \frac{1}{6} <\frac{\sqrt{10}}{6} = \diam \psi_j(A).\] 
Even though we cannot apply Theorem~\ref{thm : Op_WSI_IFS} directly, we can derive the formula for $\mathbf{f}$, the persistence landscape of $H_0\Cech(A)$, using a similar argument to what we had in the previous section.  We claim that $\mathcal{L}:L^\infty(\N\times\R)\to L^\infty(\N\times\R)$, defined by $\mathcal{L}\mathbf{g} = \mathbf{h}$, where 
\[\begin{split}
h^{(1)} &= \tau_{\big(0,\tfrac{\sqrt{5}}{2}\big)},\:\: h^{(2)} = \tau_{(0,\tfrac{1}{3})}\\
h^{(3)} &= h^{(4)} = \tau_{(0,\tfrac{1}{6})}\\
h^{(4k+1)}(t) &= h^{(4k+2)}(t) = h^{(4k+3)}(t) = h^{(4k+4)}(t) = \frac{1}{3}g^{(k+1)}(3t)\:\:\text{for}\:\:k\in\mathbb{N}.
\end{split},\]
satisfies $\mathcal{L}\mathbf{f} = \mathbf{f}$.

In order to prove this, we take $S_0 = \{(0,0),(1,0),(0,\frac{1}{2}), (1,\frac{1}{2})\}$, and define a sequence of sets $\{S_n\}_{n=1}^\infty$, where 
\[S_1 = \Psi(S_0),\:\:S_{n+1} = \Psi(S_n)\:\:\text{for}\:\:n\in\N.\]
Let $\mathbf{f}_n$ denote the persistence landscape function for $S_n$, then we claim that $\mathcal{L}$ satisfies $\mathcal{L}\mathbf{f}_n = \mathbf{f}_{n+1}$ for all $n\in\N$.  Because $\lim_{n\to\infty}d_H(S_n,A) =0$, it follows that $\lim_{n\to\infty}\mathbf{f}_n = \mathbf{f}$, and as we have reasoned in previous examples, this implies that $\mathcal{L}\mathbf{f} = \mathbf{f}$.

To prove that $\mathcal{L}\mathbf{f}_n = \mathbf{f}_{n+1}$, fix $n\in\N$.  Since $\diam(A) = \frac{\sqrt{5}}{2}$, it is clear that $f_{n+1}^1 = \tau_{\big(0,\frac{\sqrt{5}}{2}\big)}$.  For $s\le t$, let $M^{s,t}:H_0\Cech(S_n,s)\to H_0\Cech(S_n,t)$ and $P^{s,t}:H_0\Cech(S_{n+1},s)\to H_0\Cech(S_{n+1},t)$ denote the homomorphisms induced by the obvious inclusion
mappings. % $\Cech(S_n,s)\to\Cech(S_{n},t)$ and $\Cech(S_{n+1},s)\to\Cech(S_{n+1},t)$ respectively.  
Since $\Cech(S_{n+1},\varepsilon)$ is path connected for $\varepsilon\ge \frac{1}{3}$, this implies that $\rank H_0\Cech(S_{n+1},\varepsilon) = 1$.  On the other hand, for $\varepsilon \in[\frac{1}{6},\frac{1}{3})$, we know that $\rank H_0\Cech(S_{n+1},\varepsilon)= 2$ since $\Cech(S_{n+1},\varepsilon)$ is not path connected, with $\psi_1(S_n)\cup\psi_3(S_n),\psi_2(S_n)\cup\psi_4(S_n)\subset S_{n+1}$ each forming an $\varepsilon$-component of $S_{n+1}$.  For $\varepsilon <\frac{1}{6}$, we know that $\rank H_0\Cech(S_{n+1},\varepsilon)\ge 4$ since 
\[\min_{1\le j\le 4}d(\psi_j(S_n),S_{n+1}\backslash\psi_j(S_n)) = \frac{1}{6}.\]
% We also know that for $\varepsilon <0$, $\rank H_0\Cech(S_{n+1},\varepsilon) = 0$. 
Therefore, for $t\in\R$, 
\[f_{n+1}^{(2)}(t) = \sup\{m\ge 0 \rank P^{t-m,t+m}\ge 2\} = \max\{t,\tfrac{1}{3}-t\} = \tau_{(0,\tfrac{1}{3})}(t).\]
and for $j = 3,4$
\[f_{n+1}^{(j)}(t) = \sup\{m\ge 0 \rank P^{t-m,t+m}\ge 4\} = \max\{t,\tfrac{1}{6}-t\} = \tau_{(0,\tfrac{1}{6})}(t).\]

To compute $f_{n+1}^{(j)}$ for $j\ge 5$, the key observation is that for $0\le s\le t<\frac{1}{6}$,
\begin{equation}\label{eq : rankPst_distortedC}
    \rank P^{s,t} = 4\rank M^{3s,3t},
\end{equation}
and for $s<0$ or $t\ge {1}{6}$, we get $\rank P^{s,t}\le 2$.  We have already established the second part.  To justify Equation~\eqref{eq : rankPst_distortedC}, since $\varepsilon < \frac{1}{6}$ implies that
\[\min_{1\le j\not=k\le 4}d(\psi_j(S_n),\psi_k(S_n)) >\varepsilon,\]
it follows from Lemma~\ref{lemma : dir_sum_hom} that
\[H_0\Cech(S_{n+1},\varepsilon)\cong\bigoplus_{j=1}^4 H_0\Cech(S_n,3\varepsilon).\]
As in the previous example, %By Proposition~\ref{prop : surj_inclusion}, 
this isomorphism implies $\eqref{eq : rankPst_distortedC}$.  Using identical reasoning as in the previous section, this implies that for $j = 4k+l$ for $k\in\N$ and $l\in\{1,2,3,4\}$,
\[f_{n+1}^{(j)}(t) = \frac{1}{3}f_n{(k+1)}(3t).\]
Thus $\mathcal{L}\mathbf{f}_n = \mathbf{f}_{n+1}$ for all $n\in\N$.  Therefore $\mathcal{L}\mathbf{f} = \mathbf{f}$.

We compute the formula for $\mathbf{f} = \{f^{(j)}\}_{j=1}^\infty$ by taking the limit of $\mathcal{L}^n\mathbf{0}$ as $n\to\infty$.  We find that
\[f^{(j)} = \begin{cases}
\tau_{\big(0,\frac{\sqrt{5}}{2}\big)}, &\text{if}\:\:j=1\\
\tau_{(0,3^{-1})}, &\text{if}\:\:j=2\\
\tau_{(0,6^{-1})}, &\text{if}\:\:2<j\le 4\\
\tau_{(0,3^{-k-1})}, &\text{if}\:\:4^{k}<j\le 2\cdot 4^{k}\:\:\text{for}\:\:k\in\N\\
\tau_{(0,3^{-k}6^{-1})}, &\text{if}\:\:2\cdot 4^k<j\le 4^{k+1}\:\:k\in\N
\end{cases}.\]
In terms of Equation~\eqref{eq : PL_wellseperated}, we have $N = 4$, $c = \frac{1}{3}$, $\delta_1 = \frac{\sqrt{5}}{2}$, $\delta_2 = \frac{1}{3}$, and $\delta_3=\delta_4 = \frac{1}{6}$.  This means the persistence landscape of $H_0\Cech(A)$ is consistent with the formula from Theorem~\ref{thm : Op_WSI_IFS} even though $\Psi$ does not have well-separated images.  

\subsection{Remarks}

The 1/5 Cantor set in \ref{ssec : mod1/5} demonstrates how a reasonable formula for the persistence landscape of $H_0\Cech(A)$ can be found in the one-dimensional case, even when images are not well-separated. %touching.  
As long as the C\v ech complex of the intersection of these images is reasonable, we can use the Mayer-Vietoris sequence to make precise the difference between $\rank H_0\Cech(\Psi(S_n),\varepsilon)$ and what the rank would be if $\Psi$ had well-separated images.  The example also shows the intuition that each time two images are touching, we have to ``skip" one of the first $N$ terms in the sequence that makes up $\mathbf{f}$. 

The final two examples demonstrate that at least for zero-dimensional homology, the well-separated assumption can be too restrictive.  We might be better off replacing the well-separated hypothesis with the assumption that
\[\inf\{\varepsilon>0|\:C(A,\varepsilon)<N\}\ge \max_{1\le j\le N}\inf\{\varepsilon>0|\:C(\psi_j(A),\varepsilon)=1\} ,\]
since the proof of Theorem~\ref{thm : Op_WSI_IFS} seems to work as long each image becomes path connected by the time any two different images become path connected.
However, the well-separated assumption might be necessary for finding the persistence landscape for $H_p\Cech(A)$ for $p\ge 1$.  For $p = 1$, it would guarantee that there are no loops persisting in individual images by the time loops could be ``born" consisting of 1-chains consisting from a combination of different images.
%Although the well-separated assumption is too restrictive for the 0-homology, it might be necessary for finding the persistence landscape for $H_p\Cech(A)$ for $p\ge 1$.  For $p = 1$, it would guarantee that there are no loops persisting in individual images by the time loops could be ``born" consisting of 1-chains consisting from a combination of different images.

%\chapterbib

%\bibliographystyle{apa}
%\bibliography{Reference/mybib}

%\include{PersistenceWithFractals}

\textbf{Acknowledgments.} Michael J. Catanzaro, Lee Przybylski, and Eric S. Weber were supported in part by the National Science Foundation under award \#1934884.  Lee Przybylski and Eric S. Weber were supported in part by the National Science Foundation and the National Geospatial Intelligence Agency under award \#1830254.

%%%%%%%%%%%%%%
% Bibliography
% \input{biblio.tex}
\bibliographystyle{amsplain}
\bibliography{mybib,fractal,tda}
%\nocite{*}

\end{document}